\definecolor{newcolor}{rgb}{.8,.349,.1}
\newtheorem{assumption}{Assumption}
\crefname{assumption}{Assumption}{Assumptions}
\crefname{definition}{Definition}{Definitions}
\crefname{lemma}{Lemma}{Lemmas}
\crefname{remark}{Remark}{Remarks}
\crefname{theorem}{Theorem}{Theorems}
\crefname{proposition}{Proposition}{Propositions}
\crefname{section}{Section}{Sections}
\crefname{figure}{Fig.}{Figs.}
\crefname{equation}{}{}
\crefname{table}{Table}{Tables}
\newcommand{\fnc}[1]{\ensuremath{\mathcal{#1}}}
\newcommand{\uhk}[0]{\ensuremath{\bm{{u}}_{h,k}}}
\newcommand{\uk}[0]{\ensuremath{\bm{{u}}_{k}}}
\newcommand{\xk}[0]{\ensuremath{\bm{{x}}_{k}}}
\renewcommand{\H}[0]{\mathsf{H}}
\newcommand{\Hk}[0]{\mathsf{H}_{k}}
\newcommand{\Dk}[0]{\mathsf{D}_{k}}
\newcommand{\Dbk}[0]{\mathsf{D}_{b,k}}
\newcommand{\DDk}[0]{\mathsf{D}_{k}^{(2)}}
\newcommand{\Sk}[0]{\mathsf{S}_{k}}
\newcommand{\Qk}[0]{\mathsf{Q}_{k}}
\newcommand{\Ek}[0]{\mathsf{E}_{k}}
\newcommand{\D}[0]{\mathsf{D}}
\newcommand{\Q}[0]{\mathsf{Q}}
\newcommand{\E}[0]{\mathsf{E}}
\newcommand{\M}[0]{\mathsf{M}}
\newcommand{\T}[0]{\mathsf{T}}
\newcommand{\C}[0]{\mathsf{C}}
\newcommand{\A}[0]{\mathsf{A}}
\newcommand{\I}[0]{\mathsf{I}}
\newcommand{\X}[0]{\mathsf{X}}
\newcommand{\Y}[0]{\mathsf{Y}}
\newcommand{\R}[0]{\mathsf{R}}
\newcommand{\Rgk}[0]{\mathsf{R}_{\gamma k}}
\newcommand{\Rgv}[0]{\mathsf{R}_{\gamma v}}
\newcommand{\Dgk}[0]{\mathsf{D}_{\gamma k}}
\newcommand{\V}[0]{\mathsf{V}}
\newcommand{\eg}[0]{{e.g.\@}\xspace}
\newcommand{\ie}[0]{{i.e.\@}\xspace}
\newcommand{\ignore}[1]{} 
\newcommand{\polyref}[1]{\ensuremath{\mathbb{P}^{#1}}(\hat{\Omega})}
\newcommand{\polyk}[1]{\ensuremath{\mathbb{P}^{#1}}({\Omega}_k)}
\newcommand{\poly}[1]{\ensuremath{\mathbb{P}^{#1}}({\Omega})}
\newcommand{\cont}[1]{\ensuremath{\mathcal{C}^{#1}}({\Omega}_k)}
\newcommand{\IR}[1]{\mathbb{R}^{#1}}%
\newcommand{\IRtwo}[2]{\mathbb{R}^{{#1}\times{#2}}}%
\newcommand*{\affaddr}[1]{#1} 
\newcommand*{\affmark}[1][*]{\textsuperscript{#1}}
\begin{document}

\title{Stability and Functional Superconvergence of Narrow-Stencil Second-Derivative Generalized Summation-By-Parts Discretizations} 


\titlerunning{Stability and Functional Superconvergence of Narrow-Stencil SBP Discretizations}        

\author{Zelalem Arega Worku\protect\affmark[*] \and David W. Zingg\protect\affmark[*]}

\authorrunning{Z. Worku, D.W. Zingg}

\institute{Zelalem Arega Worku \at
	\email{zelalem.worku@mail.utoronto.ca}  
	\and
	David W. Zingg \at
	\email{dwz@oddjob.utias.utoronto.ca}
	\and
	\affaddr{\affmark[*]Institute for Aerospace Studies, University of Toronto, Toronto, Ontario, M3H 5T6, Canada}	
}

\date{}
\def\makeheadbox{\relax}
\maketitle
\vskip -2cm
\addcontentsline{toc}{section}{Abstract}
\begin{abstract}
We analyze the stability and functional superconvergence of discretizations of diffusion problems with the narrow-stencil second-derivative generalized summation-by-parts (SBP) operators coupled with simultaneous approximation terms (SATs). Provided that the primal and adjoint solutions are sufficiently smooth and the SBP-SAT discretization is primal and adjoint consistent, we show that linear functionals associated with the steady diffusion problem superconverge at a rate of $ 2p $ when a degree $ p+1 $ narrow-stencil or a degree $ p $ wide-stencil generalized SBP operator is used for the spatial discretization. Sufficient conditions for stability of adjoint consistent discretizations with the narrow-stencil generalized SBP operators are presented. The stability analysis assumes nullspace consistency of the second-derivative operator and the invertibility of the matrix approximating the first derivative at the element boundaries. The theoretical results are verified by numerical experiments with the one-dimensional Poisson problem.

\keywords{Summation-by-parts \and Adjoint consistency \and Simultaneous approximation term \and Narrow-stencil \and Functional superconvergence}
\subclass{65M06 \and 65M12 \and 65N06 \and 65N12}

\end{abstract}

\section{Introduction} \label{sec:introduction}
Compared to wide-stencil\footnote{Second-derivative operators formed by applying first-derivative operators twice.} summation-by-parts (SBP) operators, explicitly formed narrow-stencil\footnote{Also known as compact-stencil second-derivative operators.} second-derivative SBP operators provide smaller solution error, superior solution convergence rates, compact stencil width, and better damping of high frequency modes \cite{mattsson2004summation,mattsson2008stable,mattsson2012summation,del2015SecondDerivative,eriksson2018dual}. As with the wide-stencil operators, narrow-stencil second-derivative operators are coupled by simultaneous approximation terms (SATs) \cite{carpenter1994time}. However, the SAT coefficients derived for wide-stencil SBP operators must be modified for implementations with narrow-stencil SBP operators to achieve stability and adjoint consistency simultaneously. Unfortunately, the analysis required to find such SAT coefficients for narrow-stencil SBP operators is more involved, \eg, see \cite{eriksson2018dual}.  

Hicken and Zingg \cite{hicken2011superconvergent} showed that adjoint consistent SBP-SAT discretizations of linear elliptic partial differential equations (PDEs) lead to functional superconvergence (see also \cite{berg2012superconvergent,hicken2014dual,hicken2012output}). In their study, they analyzed discretizations with wide-stencil second-derivative classical SBP (CSBP) operators by posing second-order linear PDEs as a system of first-order equations and determined the conditions that the SATs must satisfy for adjoint consistency and functional superconvergence. A similar analysis is conducted in \cite{worku2020simultaneous} for multidimensional SBP operators, but without posing the second-order linear PDEs as a system of first-order equations. The latter approach enables analysis of functional accuracy of adjoint consistent discretizations of diffusion problems with narrow-stencil SBP operators. While the stability of discretizations arising from narrow-stencil second-derivative operators is well-studied (\eg, see \cite{carpenter1999stable,mattsson2008stable,mattsson2012summation,gong2011interface,mattsson2008discontinuous,mattsson2013solution}), it is only recently (see, \eg, \cite{eriksson2018dual,eriksson2018finite}) that conditions for which such discretizations satisfy both stability and adjoint consistency requirements are presented. Eriksson \cite{eriksson2018dual}, used the eigendecomposition technique to find the conditions on the SAT coefficients that enable construction of stable and adjoint consistent discretizations of diffusion problems. In a subsequent paper \cite{eriksson2018finite}, Eriksson and Nordstr{\"o}m used a variant of the approach in \cite{eriksson2018dual} to find a more general set of SAT coefficients. Although these SAT coefficients lead to adjoint consistent and stable discretizations in practice, the analysis in \cite{eriksson2018finite} assumes a condition that is not satisfied by many narrow-stencil second-derivative operators in the literature, including those in \cite{mattsson2004summation,mattsson2008discontinuous,mattsson2012summation,del2015SecondDerivative,mattsson2013solution}. Furthermore, it is not straightforward how the theory extends to narrow-stencil generalized SBP operators which have one or more of the following characteristics: exclusion of one or both boundary nodes, non-repeating interior point operators, and non-uniform nodal distribution \cite{del2015SecondDerivative}.

The first objective of this paper is to establish the conditions required for the stability of adjoint consistent SBP-SAT discretizations of diffusion problems with the generalized narrow-stencil second-derivative SBP operators. We use the ``borrowing trick"\cite{carpenter1999stable} in the energy stability analysis which directly applies to the diagonal- and block-norm\footnote{Also referred to as full-norm matrix.} narrow-stencil second-derivative SBP operators in \cite{mattsson2004summation,mattsson2008discontinuous,mattsson2012summation,mattsson2013solution} and to the generalized SBP operators of Del Rey Fern{\'a}ndez and Zingg \cite{del2015SecondDerivative} upon minor modifications of the derivative operators at element boundaries. The second objective is to show that primal and adjoint consistent discretizations lead to functional convergence rates of $ 2p $ when a degree $ p + 1$ narrow-stencil or a degree $ p $ wide-stencil diagonal-norm second-derivative generalized SBP operator is used to discretize steady diffusion problems for which the primal and adjoint solutions are sufficiently smooth. We also show that the functional converges at a rate of $ 2p $ irrespective of whether or not the scheme is adjoint consistent when a degree $ 2p - 1 $ dense-norm wide- or narrow-stencil second-derivative SBP operator is used to discretize the spatial derivatives. Finally, we specialize the generalized form of the SATs given in \cite{yan2018interior,worku2020simultaneous} for one-dimensional implementation and provide penalty coefficients corresponding to a few known types of SAT such that they lead to consistent, adjoint consistent, conservative, and stable discretizations when coupled with the narrow-stencil second-derivative generalized SBP operators. 

The paper is organized as follows. \cref{sec:notation} presents the notation and some important definitions. In \cref{sec:model problem}, we state the model problem and its SBP-SAT discretization. The main theoretical results that establish the functional superconvergence and energy stability of the SBP-SAT discretizations are presented in \cref{sec:theoretical results}. The theoretical results are verified using the steady version of the model problem, the Poisson equation, in \cref{sec:numerical results} and concluding remarks are presented in \cref{sec:conclusion}.

\section{Preliminaries} \label{sec:notation}
We closely follow the notation used in \cite{del2015SecondDerivative,fernandez2014generalized,yan2018interior,worku2020simultaneous}. A one-dimensional compact domain is considered, and it is tessellated into $ n_e $ non-overlapping elements, $ {\mathcal T}_h \coloneqq \{\{ \Omega_k\}_{k=1}^{n_e}: \Omega=\cup_{k=1}^{n_e} {\Omega}_k\}$. The boundaries of each element will be referred to as interfaces, and we denote their union by $ \Gamma_k \coloneqq \partial\Omega_k $. The set of all interior interfaces is denoted by $\Gamma ^I \coloneqq \{\Gamma_k \cap \Gamma_v : k,v=1,\dots,n_e, k\neq v \}$, while the element interfaces for which Dirichlet and Neumann boundary conditions are enforced are in the sets $ \Gamma^{D} $ and $ \Gamma^{N} $, respectively, and $ \Gamma:=\Gamma^I\cup\Gamma^D\cup\Gamma^N $. Operators associated with the left and right interfaces of $ \Omega_{k} $ bear the subscripts $ \ell $ and $ r $, respectively, and the left and right most elements are indicated by the subscripts $ L $ and $ R $, respectively, \eg, $ \D_{\ell L} $ is a derivative operator at the left interface of the left most element. The set of $n_p$ volume nodes in element $ \Omega_k $ is represented by $ \xk= \{x_i\}_{i=1}^{n_p} $. Uppercase script type, \eg, $\fnc{U}_k \in \cont{\infty}$, is used for continuous functions, and $\polyref{p} $ denotes the space of polynomials up to total degree $ p $, which has a cardinality of $ n_p^* = p+1 $. Bold letters, \eg, $ \uk \in \IR{n_p}$, delineate the restriction of $ \fnc{U}_k $ to grid points $ \xk $, while solution vectors to the discrete systems of equations have subscript $ h $, \eg, $ \uhk \in \IR{n_p}$. For the purpose of the functional convergence analysis in \cref{sec:functinal error}, we define $h\coloneqq \max_{a, b \in \bm{x}_k} |a - b|$ as the size of an element. Matrices are denoted by sans-serif uppercase letters, \eg, $\V \in \IRtwo{n_p}{n_p}$; $ \bm{1} $ denotes a vector consisting of all ones, $ \bm {0} $ denotes a vector or matrix consisting of all zeros. The sizes of $ \bm{1} $ and $ \bm {0} $ should be clear from context. 

Definitions of the first- and second-derivative SBP operators presented in \cite{del2015SecondDerivative} are stated below. For the construction of narrow-stencil second-derivative SBP operators, we refer the reader to \cite{fernandez2014generalized,del2015SecondDerivative,mattsson2012summation,mattsson2013solution}.

\begin{definition}[Generalized first-derivative SBP operator] \label{def:Dk}	The matrix $\Dk \in \IRtwo{n_p}{n_p}$ is a degree $ p $ SBP operator approximating the first derivative $ \pdv{x} $ on the set of nodes $ \xk $, which need neither be uniform nor include nodes on the boundaries and may have nodes outside the domain of element $ \Omega_k $, if \cite{del2015SecondDerivative}
	\begin{enumerate}
		\item $ \Dk \bm{p} = \pdv{\fnc{P}}{x}$ for all $\fnc{P} \in \polyk{p} $
		\item $ \Dk=\H_k^{-1} \Q_k $, where $ \H_k $ is a symmetric positive definite (SPD) matrix, and 
		\item $ \Qk = \Sk + \frac{1}{2} \Ek $, where $ \Sk = - \Sk^T $, $ \Ek = \Ek^T $, and $ \Ek $ satisfies 
		$ \bm{p}^T \E_k  \bm {q} = \sum_{\gamma\in \Gamma_k}[\fnc{P}]_{\gamma}[\fnc{Q}]_{\gamma}n_{\gamma k}$
		for all $ \fnc{P},\fnc{Q} \in \polyk{\tau} $, where $ \tau \ge p $, and $ n_{\gamma k} =1 $ if $ \gamma $ is the right interface of $ \Omega_{k} $, otherwise $ n_{\gamma k} =-1 $.
\end{enumerate} 
\end{definition}

The norm matrix, $ \Hk $, may be diagonal or dense. A dense-norm matrix refers to any norm matrix that is not diagonal, which includes the block-norm matrix. The block-norm matrix has diagonal entries at the interior points (containing $ h $) and dense blocks at the top-left and bottom-right corners corresponding to the boundary nodes. The $ L^2 $ inner product of two functions $ \fnc{P} $ and $ \fnc{Q} $ is approximated by \cite{hicken2013summation,fernandez2014generalized,hicken2016multidimensional,fernandez2018simultaneous}
\[ \bm{p}^T\Hk \bm{q} = \int_{{\Omega}_k}\fnc{P}\fnc{Q}\dd{\Omega} +\order{h^{2p}},\]
and $ \Hk $ defines the norm
\[ \bm{u}^T \Hk \bm{u} = \norm{\bm {u}}_{\H} = \int_{{\Omega}_k} \fnc{U}^2\dd{\Omega} + \order{h^{2p}}. \]
The $ \E_k $ matrix is constructed as \cite{fernandez2014generalized,fernandez2018simultaneous}
\begin{equation}\label{eq:Ek}
	\Ek = \sum_{\gamma\subset\Gamma_k} n_{\gamma k}\Rgk^T\Rgk = \R_{r k}^T\R_{r k} - \R_{\ell k}^T\R_{\ell k},
\end{equation}
where $ \Rgk $ is an extrapolation row vector of at least order $ h^{\tau+1}$ accuracy, \ie, $ \R_{\gamma k}\uk = [\fnc{U}_k]_\gamma + \fnc{O}(h^{\ge \tau+1})$. Furthermore, we define an operator that extrapolates the product of the diffusion coefficient and the derivative of the solution from volume nodes to an interface as 
\begin{equation}\label{eq:normal derivative}
\Dgk =n_{\gamma k} \Rgk \Lambda_k\Dbk.
\end{equation} 

\begin{definition} [Order-matched narrow-stencil second-derivative generalized SBP operator]\label{def:D2} The narrow-stencil second-derivative operator $\DDk$ of degree $p+1$, approximating $\frac{\partial}{\partial x}(\lambda_k\frac{\partial {\fnc{U}_k}}{\partial x})$, is order-matched with the first-derivative operator $\Dk=\H_k^{-1}\Q_k$ of degree $p$ on the nodal set $ \xk $ if \cite{del2015SecondDerivative}
\begin{equation} \label{eq:D2 accuracy}
\DDk(\lambda_k)\bm{p}_k = \pdv{x}(\lambda_k\pdv{\fnc{P}_k}{x}),\quad \forall \; (\lambda_k \fnc{P}_k) \in \polyk{p+1},
\end{equation}
and $ \DDk $ is of the form 
\begin{equation}\label{eq:D2 decomposition 1}
\DDk= \Hk^{-1}[-\M_k + \Ek \Lambda_k \Dbk], 
\end{equation}
where $ \M_k = \sum_{i=1}^{n_p}\Lambda_k(i,i)\bar{\M}_{i}  $, $\bar{\M}_i$ are symmetric positive semidefinite matrices, \[\Lambda_k = {\rm {diag}}(\lambda_k(x_1),\lambda_k(x_2), ...,\lambda_k(x_{n_p})),\] and $\Dbk$ is an approximation to the first derivative of degree and order $\ge p+1$.
\end{definition} 

The order-matched SBP operators in \cref{def:D2} are assumed to have a diagonal-norm matrix. Note that for the $ m^{th} $ derivative, the degree and order are related by $order = degree -m+1$; consequently, both the diagonal-norm narrow-stencil $\DDk$ and $\Dk$ operators are order $p$ accurate, while a diagonal-norm wide-stencil second-derivative operator, which has the decomposition 
\begin{equation}
	\Dk \Lambda_k\Dk = \Hk^{-1}[-\Dk^T\Hk\Lambda_k\Dk + \Ek \Lambda_k \Dk],
\end{equation} 
is order $ p-1 $ accurate \cite{del2015SecondDerivative}. Similar to the diagonal-norm SBP operators, block-norm SBP operators have an order $2p$ centered-difference interior operator. At the boundaries, however, the block-norm wide- and narrow-stencil second-derivative operators are closed with order $ 2p-2 $ one-sided stencils, unlike the order $ p-1 $ and $ p $ one-sided stencils used with the diagonal-norm wide- and narrow-stencil SBP operators, respectively. Furthermore, the $ \Dbk $ matrix of a block-norm operator contains order $ 2p - 1 $ approximations of the first derivative at rows corresponding to the boundary nodes (see, \eg, \cite{mattsson2013solution,mattsson2004summation} for definition and discussion regarding the block-norm SBP operators).

\begin{remark}
	In this work, we do not assume that $ \M_k $ is necessarily symmetric positive semidefinite; rather we assume that $ \M_k + \M_k^T$ is symmetric positive semidefinite, which allows the analysis to be extended to a more general class of explicitly formed second-derivative operators, including the block-norm SBP operators in \cite{mattsson2013solution,mattsson2004summation}, which do not have symmetric $ \M_k $ matrix.
\end{remark}

Another decomposition of second-derivative SBP operators, which is instrumental for the adjoint consistency and functional superconvergence analyses in \cref{sec:theoretical results}, is presented below.
\begin{proposition}\label{prop:D2 decomposition}
	A second-derivative operator of the form \cref{eq:D2 decomposition 1}, for which $ \M_k $ is not necessarily symmetric, can be decomposed as\\
	\begin{equation}\label{eq:D2 decomposition 3}
	\begin{aligned}
	\D_{k}^{(2)}&=\H_{k}^{-1}\left(\D_{k}^{(2)}\right)^{T}\H_k-\H_{k}^{-1}\D_{r k}^{T}\R_{r k}-\H_{k}^{-1}\D_{\ell k}^{T}\R_{\ell k} 
	\\
	&\quad +\H_{k}^{-1}\R_{r k}^{T}\D_{r k}+\H_{k}^{-1}\R_{\ell k}^{T}\D_{\ell k} -\H_{k}^{-1}\left(\M_{k}-\M_{k}^{T}\right).
	\end{aligned}
	\end{equation}	
\end{proposition}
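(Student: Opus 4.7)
The plan is to start from the defining decomposition \eqref{eq:D2 decomposition 1}, namely $\Hk\DDk = -\M_k + \Ek\Lambda_k\Dbk$, rearrange it into a form that exposes the transposed operator $(\DDk)^T\Hk$, and use the surface identities provided by \eqref{eq:Ek} and \eqref{eq:normal derivative} to convert the $\Ek\Lambda_k\Dbk$ term into a sum of boundary contributions of the form $\R_{\gamma k}^T\D_{\gamma k}$ and $\D_{\gamma k}^T\R_{\gamma k}$. The strategy is purely algebraic: no new properties of the SBP operator beyond the stated definition and the symmetry of $\Hk$, $\Lambda_k$, and $\Ek$ will be needed.

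The first concrete step is to compute the transpose explicitly. Because $\Hk$ is symmetric and $\Lambda_k$ is diagonal, we obtain
\begin{equation*}
(\DDk)^T\Hk = \bigl[\Hk^{-1}(-\M_k + \Ek\Lambda_k\Dbk)\bigr]^T\Hk = -\M_k^T + \Dbk^T\Lambda_k\Ek.
\end{equation*}
Next I would expand $\Ek = \R_{rk}^T\R_{rk} - \R_{\ell k}^T\R_{\ell k}$ from \eqref{eq:Ek} and use $\D_{\gamma k} = n_{\gamma k}\R_{\gamma k}\Lambda_k\Dbk$ from \eqref{eq:normal derivative}, with $n_{rk}=+1$ and $n_{\ell k}=-1$, to recognise
\begin{equation*}
\Ek\Lambda_k\Dbk = \R_{rk}^T\D_{rk} + \R_{\ell k}^T\D_{\ell k}, \qquad \Dbk^T\Lambda_k\Ek = \D_{rk}^T\R_{rk} + \D_{\ell k}^T\R_{\ell k}.
\end{equation*}
The two minus signs from $n_{\ell k}=-1$ cancel on each side, so the boundary terms add rather than subtract — this is the only spot where care with signs is needed.

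With these identifications in hand, the final step is bookkeeping: write $-\M_k = -\M_k^T - (\M_k - \M_k^T)$ in the identity $\Hk\DDk = -\M_k + \Ek\Lambda_k\Dbk$, then add and subtract the transposed boundary term $\Dbk^T\Lambda_k\Ek = \D_{rk}^T\R_{rk} + \D_{\ell k}^T\R_{\ell k}$. Grouping $-\M_k^T + \Dbk^T\Lambda_k\Ek = (\DDk)^T\Hk$ recovers the desired structure, and left-multiplication by $\Hk^{-1}$ yields \eqref{eq:D2 decomposition 3}.

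There is no real obstacle; the argument is a one-screen calculation. The only point that requires attention is to avoid assuming symmetry of $\M_k$ (as emphasised in the preceding remark), which is precisely why the antisymmetric residual $-\Hk^{-1}(\M_k - \M_k^T)$ must appear explicitly on the right-hand side — whenever $\M_k$ is symmetric (as in the diagonal-norm case) this term vanishes, recovering the cleaner decomposition familiar from the diagonal-norm literature.
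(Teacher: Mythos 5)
Your proof is correct and follows essentially the same route as the paper's: both arguments substitute $\Ek = \R_{rk}^T\R_{rk}-\R_{\ell k}^T\R_{\ell k}$ and $\D_{\gamma k}=n_{\gamma k}\R_{\gamma k}\Lambda_k\Dbk$ into \cref{eq:D2 decomposition 1}, then add and subtract the transposed operator (equivalently, the term $\Dbk^T\Lambda_k\Ek$ together with the split $-\M_k = -\M_k^T-(\M_k-\M_k^T)$) and regroup. The sign bookkeeping with $n_{\ell k}=-1$ and the use of the symmetry of $\Hk$, $\Lambda_k$, and $\Ek$ are handled exactly as in the paper.
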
 
\begin{proof}
	Substituting \cref{eq:Ek} and \cref{eq:normal derivative} into \cref{eq:D2 decomposition 1}, we have 
	\begin{equation*}
	\begin{aligned}
	\D_{k}^{(2)}=\H_{k}^{-1}\left[-\M_k+\E_{k}\Lambda_{k}\D_{b,k}\right]&=-\H_{k}^{-1}\M_k+\H_{k}^{-1}\left(\R_{r k}^{T}\R_{r k}-\R_{\ell k}^{T}\R_{\ell k}\right)\Lambda_{k}\D_{b,k}
	\\
	&=-\H_{k}^{-1}\M_k+\H_{k}^{-1}\R_{r k}^{T}\D_{r k}+\H_{k}^{-1}\R_{\ell k}^{T}\D_{\ell k}
	\end{aligned}
	\end{equation*}
	Adding and subtracting $\H_{k}^{-1}\left(\D_{k}^{(2)}\right)^{T}\H_{k}$, we get
	\begin{equation*}
	\begin{aligned}
	\D_{k}^{(2)}&=-\H_{k}^{-1}\M_{k}+\H_{k}^{-1}\left(\D_{k}^{(2)}\right)^{T}\H_{k}-\H_{k}^{-1}\left(\D_{k}^{(2)}\right)^{T}\H_{k}
	+\H_{k}^{-1}\R_{r k}^{T}\D_{r k}+\H_{k}^{-1}\R_{\ell k}^{T}\D_{\ell k}
	\\
	& =\H_{k}^{-1}\left(\D_{k}^{(2)}\right)^{T}\H_{k}-\H_{k}^{-1}\left[-\M_{k}^{T}+\D_{b,k}^{T}\Lambda_{k}^{T}\E_{k}^{T}\right]
	+\H_{k}^{-1}\R_{r k}^{T}\D_{r k} +\H_{k}^{-1}\R_{\ell k}^{T}\D_{\ell k}-\H_{k}^{-1}\M_{k}
	\\
	&=\H_{k}^{-1}\left(\D_{k}^{(2)}\right)^{T}\H_k-\H_{k}^{-1}\D_{r k}^{T}\R_{r k}-\H_{k}^{-1}\D_{\ell k}^{T}\R_{\ell k}
	\\&\quad
	+\H_{k}^{-1}\R_{r k}^{T}\D_{r k}+\H_{k}^{-1}\R_{\ell k}^{T}\D_{\ell k} -\H_{k}^{-1}\left(\M_{k}-\M_{k}^{T}\right),
	\end{aligned}
	\end{equation*}
	which is the desired result.	
	\qed
\end{proof}

\section{Model Problem and SBP-SAT Discretization} \label{sec:model problem}
We consider the one-dimensional diffusion problem
\begin{align} \label{eq:diffusion problem}
\pdv{\fnc{U}}{t}-\pdv{x}\left(\lambda\pdv{\fnc{U}}{x}\right)& =\fnc{F}\quad\forall x\in\Omega, && \fnc{U}=\fnc{U}_{0}\;\text{at}\;t=0, 
&&
\fnc{U}|_{\Gamma^D} =\fnc{U}_{D},
&& n_{\gamma}\left(\lambda\pdv{\fnc{U}}{x}\right)\bigg|_{\Gamma^N}=\fnc{U}_{N},
\end{align}
where $ \fnc{F} \in L^2{(\Omega)}$, $ \lambda = \lambda(x)$ is a positive diffusivity coefficient, and $ \Gamma^D $ is not empty. For functional error analysis and numerical experiment purposes, we consider the steady version of \cref{eq:diffusion problem}, the Poisson problem. We also consider a compatible linear functional of the form 
\begin{equation} \label{eq:functional 1}
\fnc{I} (\fnc{U})= \int_{\Omega}\fnc{G}\fnc{U}\dd{\Omega} 
- \psi_{D}\left[\lambda\frac{\partial{\cal U}}{\partial x}n_{\gamma}\right]_{\Gamma^{D}}
+\psi_{N}{\cal U}|_{\Gamma^{N}},
\end{equation}
where $ \fnc{G}\in L^2{(\Omega)}$, $\psi_{N}=n_{\gamma} (\lambda\pdv{\psi}{x}) \in L^2{(\Gamma^N)}$, and $\psi_{D}\in L^2{(\Gamma^D)}$. A linear functional is compatible with the steady version of \cref{eq:diffusion problem} if \cite{hartmann2007adjoint}
\begin{equation}\label{eq:compatible functional}
\begin{aligned}
	&\int_{\Omega}\psi \pdv{x}\left(\lambda\pdv{\fnc{U}}{x}\right) \dd{\Omega} 
	+{\cal U}_{D}\left[\lambda\frac{\partial\psi}{\partial x}n_{\gamma}\right]_{\Gamma^{D}}
	-{\cal U}_{N}{\cal \psi}|_{\Gamma^{N}} \\ 
	&\qquad \quad= 
	\int_{\Omega}\fnc{U} \pdv{x}\left(\lambda\pdv{\psi}{x}\right) \dd{\Omega} 
	+ \psi_{D}\left[\lambda\frac{\partial{\cal U}}{\partial x}n_{\gamma}\right]_{\Gamma^{D}}
	-\psi_{N}{\cal U}|_{\Gamma^{N}}, 
\end{aligned}
\end{equation}
\ie,
\begin{equation}\label{eq:functional 2}
{\cal I}\left({\cal U}\right)=\fnc{I}\left(\psi\right)=\int_{\Omega}\psi{\cal F}{\rm d}\Omega -{\cal U}_{D}\left[\lambda\frac{\partial\psi}{\partial x}n_{\gamma}\right]_{\Gamma^{D}}+{\cal U}_{N}{\cal \psi}|_{\Gamma^{N}}.
\end{equation}
Under the compatibility condition on the functional, the adjoint, $ \psi $, satisfies the PDE (see, \eg, \cite{hicken2011superconvergent,yan2018interior,hartmann2007adjoint})
\begin{equation} \label{eq:adjoint problem}
\begin{aligned}
-\pdv{x}\left(\lambda\pdv{\psi}{x}\right)& =\fnc{G}\quad\forall x\in\Omega, && 
\psi|_{\Gamma^D} =\psi_{D}, && n_{\gamma}\left(\lambda\pdv{\psi}{x}\right)\bigg|_{\Gamma^N}=\psi_{N}.
\end{aligned}
\end{equation}

The SBP-SAT semi-discretization of the diffusion problem, \cref{eq:diffusion problem}, is given by
\begin{equation} \label{eq:SBP-SAT discretization}
\dv{\uhk}{t}=\D^{(2)}_{k}\uhk+\bm{f}_{k}-\H_{k}^{-1}\bm{s}_{k}^{I}(\uhk) -\H_{k}^{-1}\bm{s}_{k}^{B}(\uhk,{u}_{D}, {u}_{N})\eqqcolon R_{h,u},
\end{equation}
where $ \bm{f}_k $ is the restriction of $ \fnc{F} $ to the volume nodes in $ \Omega_{k} $ and the interface SATs, $ \bm{s}_{k}^{I} $,  and boundary SATs, $ \bm{s}_{k}^{B} $, given in \cite{yan2018interior,worku2020simultaneous} are specialized for one-dimensional implementation as
\begin{equation} \label{eq:interface SATs}
\bm{s}_{k}^{I}(\uhk)=\sum_{\gamma\subset\Gamma_{k}^{I}}\left[\begin{array}{cc}
\R_{\gamma k}^{T} & \D_{\gamma k}^{T}\end{array}\right]\left[\begin{array}{cc}
\T_{\gamma  k}^{(1)} & \T_{\gamma k}^{(3)}
\\
\T_{\gamma k}^{(2)} & \T_{\gamma k}^{(4)}
\end{array}\right]\left[\begin{array}{c}
\R_{\gamma k}\bm{u}_{h,k}-\R_{\gamma v}\bm{u}_{h,v}
\\
\D_{\gamma k}\bm{u}_{h,k}+\D_{\gamma v}\bm{u}_{h,v}
\end{array}\right]
\end{equation}
and 
\begin{align} \label{eq:boundary SATs}
\bm{s}_{k}^{B}(\uhk,\bm{u}_{ D}, \bm{u}_{ N}) &=\left\{\left[\begin{array}{cc}
\R_{\gamma k}^{T} & \D_{\gamma k}^{T}\end{array}\right]\left[\begin{array}{c}
\T_{\gamma k}^{(D)}\\
-1
\end{array}\right](\Rgk\uhk  -{u}_{D})\right\}_{\gamma\subset\Gamma^{D}}
 +\left\{\R_{\gamma k}^{T}\left(\D_{\gamma k}\bm{u}_{h,k}-{u}_{N}\right)\right\}_{\gamma\subset\Gamma^{N}}.
\end{align}
The SAT coefficients $\T_{\gamma k}^{(1)}, \T_{\gamma k}^{(2)}, \T_{\gamma k}^{(3)}, \T_{\gamma k}^{(4)}, \T_{\gamma k}^{(D)} \in \IR{}$ are determined such that the scheme satisfies desired properties such as conservation, adjoint consistency, and energy stability. For implementations with wide-stencil operators, we replace $ \DDk $  by $ \Dk\Lambda_k\Dk $ in \cref{eq:SBP-SAT discretization} and $ \Dbk $ by $ \Dk $ in \cref{eq:normal derivative}. 

Substituting the restriction of the exact solution to grid points, $ \uk $, in \cref{eq:SBP-SAT discretization,eq:interface SATs,eq:boundary SATs}, we see that the right-hand side (RHS) of \cref{eq:SBP-SAT discretization} yields a discretization error of $ \fnc{O}(h^{p}) $ when an order-matched narrow-stencil second-derivative SBP operator is used; hence, the discretization of the primal problem is consistent. In contrast, for diagonal-norm wide-stencil SBP operators, the discretization error is $ \fnc{O}(h^{p-1}) $ while for block-norm second-derivative SBP operators, it reduces to $ \fnc{O}(h^{2p-2}) $.  

\section{Theoretical Results}\label{sec:theoretical results}
In this section, we present the two main results of this paper. After establishing the conditions required for adjoint consistency and conservation, we show that primal and adjoint consistent SBP-SAT discretizations of the Poisson problem with the diagonal-norm narrow-stencil second-derivative operators lead to functional superconvergence. To achieve this goal, we closely follow the technique used to show functional superconvergence in \cite{worku2020simultaneous}. Then, we use the energy method to find sufficient conditions that the SATs must satisfy for the stability of discretizations with narrow-stencil generalized SBP operators before stating a few concrete examples of such SATs.

\subsection{Adjoint Consistency} \label{sec:adjoint consistency}
Adjoint consistency requires that the discrete adjoint problem, 
\begin{equation}
\sum_{\Omega_{k}\in{\cal T}_{h}}\left(L_{h,k}^*(\bm{\psi}_h)-\bm{g}_k\right)=\bm{0}, 
\end{equation}
where $ L_{h,k}^*$ is the discrete adjoint operator, corresponding to the steady version of the primal problem \cref{eq:SBP-SAT discretization} satisfy
\begin{equation} \label{eq:adjoint consistency}
\lim_{h \rightarrow 0}\sum_{\Omega_{k}\in{\cal T}_{h}}\norm{L_{h,k}^{*}\left(\psi_{k}\right)-\bm{g}_{k}}_{\H_{k}}=0.
\end{equation}

To find the discrete adjoint operator, we begin by discretizing the two forms of the functional, \cref{eq:functional 1,eq:functional 2}, as 
\begin{align}
I_{h}\left(\bm{u}_{h}\right)&=\sum_{\Omega_{k}\in{\cal T}_{h}}\bm{g}_{k}^{T}\H_{k}\bm{u}_{h,k}-\psi_{D}\D_{\ell L}\bm{u}_{h,L}+\psi_{N}\R_{r R}\bm{u}_{h,R}
+\psi_{D}\T_{\ell L}^{(D)}\left(\R_{\ell L}\bm{u}_{h,L}-u_D\right), \label{eq:functional discrete 1}
\\
I_{h}\left(\bm{\psi}_{h}\right)&=\sum_{\Omega_{k}\in{\cal T}_{h}}\bm{f}_{k}^{T}\H_{k}\bm{\psi}_{h,k}-u_{D}\D_{\ell L}\bm{\psi}_{h,L}+u_{N}\R_{r R}\bm{\psi}_{h,R}
+u_{D}\T_{\ell L}^{(D)}\left(\R_{\ell L}\bm{\psi}_{h,L}-\psi_D\right),	\label{eq:functional discrete 2}		
\end{align}
where we have assumed that the Dirichlet and Neumann boundary conditions are enforced on the left and right boundaries, respectively. The last terms in \cref{eq:functional discrete 1,eq:functional discrete 2} arise from consistent modifications of the functional, see \cite{hartmann2007adjoint,hicken2011superconvergent,hicken2012output,yan2018interior,worku2020simultaneous}. Note that in cases where a Dirichlet boundary condition is enforced on both boundaries, we apply the Dirichlet SATs given in \cref{eq:boundary SATs} on both boundaries and modify the discrete functionals, \cref{eq:functional discrete 1,eq:functional discrete 2}, by replacing the Neumann boundary terms by Dirichlet right boundary terms similar to those given for the left boundary. The theory developed holds for such cases without significant modification.

To derive the conditions required for adjoint consistency, we set $ I_h(\bm{u}_h) - I_h(\bm{\psi}_h) = 0 $, which is a discrete analogue of the relation $ \fnc{I}(\fnc{U}) - \fnc{I}(\fnc{\psi}) = 0$. Adding $ \sum_{\Omega_{k}\subset{\cal T}_{h}}\bm{\psi}_{h,k}^{T}\H_{k}R_{h,k}+I_{h}\left(\bm{\psi}_{h}\right)-I_{h}\left(\bm{\psi}_{h}\right)=0 $ to the RHS of \cref{eq:functional discrete 1} and rearranging we find 
\begin{equation}\label{eq:adjoint 1}
	\begin{aligned}
		I_{h}\left(\bm{u}_{h}\right)&=I_{h}\left(\bm{\psi}_{h}\right)+\sum_{\Omega_{k}\subset{\cal T}_{h}}\bm{g}_{k}^{T}\H_{k}\bm{u}_{h,k}-\psi_{D}\D_{\ell L}\bm{u}_{h,L}+\psi_{N}\R_{r R}\bm{u}_{h,R}
		-u_{D}\T_{\ell L}^{(D)}\left(\R_{\ell L}\bm{\psi}_{h,L}-\psi_{D}\right)
		\\&\quad
		+\psi_{D}\T_{\ell L}^{(D)}\left(\R_{\ell L}\bm{u}_{h,L}-u_{D}\right)+u_{D}\D_{\ell L}\bm{\psi}_{h,L}-u_{N}\R_{r R}\bm{\psi}_{h,R}
		\\&\quad
		+\sum_{\Omega_{k}\subset{\cal T}_{h}}\left[\bm{\psi}_{h,k}^{T}\H_{k}\D_{k}^{(2)}\bm{u}_{h,k}-\bm{\psi}_{h,k}^{T}\bm{s}_{k}^{I}(\bm{u}_{h,k})-\bm{\psi}_{h,k}^{T}\bm{s}_{k}^{B}(\bm{u}_{h,k},\bm{u}_{D},\bm{u}_{N})\right].
	\end{aligned}
\end{equation}
Transposing \cref{eq:adjoint 1}, enforcing $ I_h(\bm{u}_h) - I_h(\bm{\psi}_h) = 0 $, applying identity \cref{eq:D2 decomposition 3}, and simplifying, we obtain
\begin{equation}
	\begin{aligned}
		&\sum_{\Omega_{k}\subset{\cal T}_{h}}\bigg\{\bm{u}_{h,k}^{T}\H_{k}\left(\D_{k}^{(2)}\bm{\psi}_{h,k}+\bm{g}_{k}\right)+\bm{u}_{h,k}^{T}\left(\M_{k}-\M_{k}^{T}\right)\bm{\psi}_{h,k}\bigg\}
		-\bm{u}_{h,L}^{T}\R_{\ell L}^{T}\T_{\ell L}^{(D)}\left(\R_{\ell L}\bm{\psi}_{h,L}-\psi_{D}\right)
		\\&
		\quad-\sum_{\gamma\subset\Gamma^{I}}\left[\begin{array}{c}
		\R_{\gamma k}\bm{u}_{h,k}\\
		\R_{\gamma v}\bm{u}_{h,v}\\
		\D_{\gamma k}\bm{u}_{h,k}\\
		\D_{\gamma v}\bm{u}_{h,v}
		\end{array}\right]^{T}\left[\begin{array}{cccc}
		\T_{\gamma k}^{(1)} & -\T_{\gamma v}^{(1)} & \T_{\gamma k}^{(2)}+1 & -\T_{\gamma v}^{(2)}\\
		-\T_{\gamma k}^{(1)} & \T_{\gamma v}^{(1)} & -\T_{\gamma k}^{(2)} & \T_{\gamma v}^{(2)}+1\\
		\T_{\gamma k}^{(3)}-1 & \T_{\gamma v}^{(3)} & \T_{\gamma k}^{(4)} & \T_{\gamma v}^{(4)}\\
		\T_{\gamma k}^{(3)} & \T_{\gamma v}^{(3)}-1 & \T_{\gamma k}^{(4)} & \T_{\gamma v}^{(4)}
		\end{array}\right]\left[\begin{array}{c}
		\R_{\gamma k}\bm{\psi}_{h,k}\\
		\R_{\gamma v}\bm{\psi}_{h,v}\\
		\D_{\gamma k}\bm{\psi}_{h,k}\\
		\D_{\gamma v}\bm{\psi}_{h,v}
		\end{array}\right]
		\\&
		\quad+\bm{u}_{h,L}^{T}\D_{\ell L}^{T}\left(\R_{\ell L}\bm{\psi}_{h,L}-\psi_{D}\right)
		-\bm{u}_{h,R}^{T}\R_{r R}^{T}\left(\D_{r R}\bm{\psi}_{h,k}-\psi_{N}\right)=0,
	\end{aligned}
\end{equation} 
from which we extract the discrete adjoint operator on element $ \Omega_{k} $ as 
\begin{equation}\label{eq:discrete adjoint operator}
	\begin{aligned}
		L_{h,k}^{*} (\psi_h)&= -\DDk \bm{\psi}_{h,k} - \H_k^{-1}(\M_k-\M_k^T)\bm{\psi}_{h,k} 
		+ \H^{-1}_k(\bm{s}_k^I)^*(\bm{\psi}_{h,k}) +  \H^{-1}_k(\bm{s}_k^B)^*(\bm{\psi}_{h,k},{\psi}_{D}, {\psi}_{N}),
	\end{aligned}
\end{equation}
where the interface and boundary SATs for the adjoint problem are given, respectively, by
\begin{equation} \label{eq:interface SATs adjoint}
\left(\bm{s}_{k}^{I}\right)^{*}  =\sum_{\gamma\subset\Gamma_{k}^{I}}\begin{bmatrix}
\R_{\gamma k}^{T} & \D_{\gamma k}^{T}\end{bmatrix}
\begin{bmatrix}
\T_{\gamma k}^{(1)} & -\T_{\gamma v}^{(1)} & \T_{\gamma k}^{(2)}+1 & -\T_{\gamma v}^{(2)}\\
\T_{\gamma k}^{(3)}-1 & \T_{\gamma v}^{(3)} & \T_{\gamma k}^{(4)} & \T_{\gamma v}^{(4)}
\end{bmatrix}
\begin{bmatrix}
\R_{\gamma k}\bm{\psi}_{h,k}\\
\R_{\gamma v}\bm{\psi}_{h,v}\\
\D_{\gamma k}\bm{\psi}_{h,k}\\
\D_{\gamma v}\bm{\psi}_{h,v}
\end{bmatrix},
\end{equation}
\begin{equation}\label{eq:boundary SATs adjoint}
\begin{aligned}
\left(\bm{s}_{k}^{B}\right)^{*}&=\left\{\left[\begin{array}{cc}
\R_{\gamma k}^{T} & \D_{\gamma k}^{T}\end{array}\right]\left[\begin{array}{c}
\T_{\gamma k}^{(D)}\\
-1
\end{array}\right]\left[\begin{array}{cc}
\R_{\gamma k}\bm{\psi}_{h,k}-{\psi}_{D}\end{array}\right]\right\}_{\gamma\subset\Gamma^{D}} +\left\{\R_{\gamma k}^{T}\left(\D_{\gamma k}\bm{\psi}_{h,k}-{\psi}_{N}\right)\right\}_{\gamma\subset\Gamma^{N}}.
\end{aligned}
\end{equation}
Furthermore, we define the residual of the SBP-SAT discretization of the adjoint problem as
\begin{align} \label{eq:SBP-SAT adjoint}
		R_{h,\psi}&\coloneqq\D_k^{(2)}\bm{\psi}_{h,k}+ \bm{g}_k + \H_k^{-1}(\M_k-\M_k^T)\psi_{h,k} 
		-\H^{-1}_k(\bm{s}_k^I)^*(\bm{\psi}_{h,k}) - \H^{-1}_k(\bm{s}_k^B)^*(\bm{\psi}_{h,k},{\psi}_{D}, {\psi}_{N}) = \bm{0},
\end{align}

Substituting the exact adjoint solution into \cref{eq:SBP-SAT adjoint}, we observe that $ R_{h,\psi} $ is $ \fnc{O}(h^{\ge p - 1}) $, \ie, the discretization of the adjoint problem is consistent, if
\begin{align}\label{eq:adjoint consistency conditions}
\T_{\gamma k}^{(1)} = \T_{\gamma v}^{(1)}, && \T_{\gamma k}^{(2)}+1=-\T_{\gamma v}^{(2)}, && \T_{\gamma k}^{(3)} - 1 = -\T_{\gamma v}^{(3)}, && \T_{\gamma  k}^{(4)} =  \T_{\gamma  v}^{(4)}, && \M_k = \M_k^T.   
\end{align}  
For discretizations with wide-stencil second-derivative operators, the last condition, $ \M_k=\M_k^T $, is satisfied by default.
 
\subsection{Conservation}\label{sec:conservation}
For conservation, the homogeneous diffusion problem \cref{eq:diffusion problem}, \ie, $ \fnc{F}=0 $, should satisfy Gauss's theorem discretely, \ie, $ \sum_{\Omega_{k} \subset \fnc{T}_h} \bm{1}^T\H_k {\rm{d}}\bm{u}_k/{\rm{d}}t$ must depend only on the boundary terms. Premultiplying $ R_{h,u} $ defined in \cref{eq:SBP-SAT discretization} by $ \bm{1}^T\H_k $, setting $ \bm{f}_k = 0 $, summing over all elements, and applying the decomposition of $ \DDk $ given in \cref{eq:D2 decomposition 1} yields
\begin{equation}\label{eq:Conservation}
	 \begin{aligned}
		 \sum_{\Omega_{k}\subset{\cal T}_{h}}\bm{1}^{T}\H_{k}R_{h,u}&=-\sum_{\gamma\subset\Gamma^{I}}\left[\begin{array}{c}
		 1\\
		 1\\
		 0\\
		 0
		 \end{array}\right]^{T}\left[\begin{array}{cccc}
		 \T_{\gamma k}^{(1)} & -\T_{\gamma k}^{(1)} & \T_{\gamma k}^{(3)}-1 & \T_{\gamma k}^{(3)}\\
		 -\T_{\gamma v}^{(1)} & \T_{\gamma v}^{(1)} & \T_{\gamma v}^{(3)} & \T_{\gamma v}^{(3)}-1\\
		 \T_{\gamma k}^{(2)} & -\T_{\gamma k}^{(2)} & \T_{\gamma k}^{(4)} & \T_{\gamma k}^{(4)}\\
		 -\T_{\gamma v}^{(2)} & \T_{\gamma v}^{(2)} & \T_{\gamma v}^{(4)} & \T_{\gamma v}^{(4)}
		 \end{array}\right]\left[\begin{array}{c}
		 \R_{\gamma k}\bm{u}_{h,k}\\
		 \R_{\gamma v}\bm{u}_{h,v}\\
		 \D_{\gamma k}\bm{u}_{h,k}\\
		 \D_{\gamma v}\bm{u}_{h,v}
		 \end{array}\right]
		 \\
		 &\qquad-\sum_{\Omega_{k}\subset\mathcal{T}_{h}}\bm{1}^{T}\M_{k}\bm{u}_{h,k}
		 -\left\{\left[\begin{array}{c}
		 1\\
		 0
		 \end{array}\right]^{T}\left[\begin{array}{cc}
		 \T_{\gamma}^{D} & -1\\
		 -1 & 0
		 \end{array}\right]\left[\begin{array}{c}
		 \R_{\gamma k}\bm{u}_{k}-{u}_{D}\\
		 \D_{\gamma k}\bm{u}_{k}
		 \end{array}\right]\right\}_{\gamma\subset\Gamma^{D}} + u_{N},
	 \end{aligned} 
\end{equation}
which reduces to a sum of boundary terms only,
\begin{equation}
	\sum_{\Omega_{k}\subset{\cal T}_{h}}\bm{1}^{T}\H_{k}R_{h,u}=\left\{\D_{\gamma k}\bm{u}_{k}-\T_{\gamma}^{D}\left(\R_{\gamma k}\bm{u}_{k}-u_{D}\right)\right\}_{\gamma\subset\Gamma^{D}} + u_{N},
\end{equation}
as required for conservation of the discretization if 
\begin{align}\label{eq:conservation conditions}
\T_{\gamma k}^{(1)} = \T_{\gamma v}^{(1)}, && \T_{\gamma k}^{(3)} - 1 = -\T_{\gamma v}^{(3)}, && \bm{1}^T\M_k = \bm{0}.  
\end{align} 
Comparing \cref{eq:conservation conditions} and \cref{eq:adjoint consistency conditions} and noting that $ \M_k \bm{1}=\bm{0} $, we see that adjoint consistency implies conservation, as noted in \cite{arnold2002unified,hartmann2013higher,worku2020simultaneous}. 

\subsection{Functional Superconvergence}\label{sec:functinal error}
Without loss of generality, we assume that the domain is tessellated using two elements, $ \Omega_L $ and $ \Omega_R $. In the subsequent analysis, we will use the vectors $ \bm{u}$, $\bm{u}_h$, $\bm{\psi}$, $\bm{\psi}_h$, $\bm{f}$, $\bm{g}$, $\mathbb{E}(\bm{u}_h)$, $\mathbb{F}(\bm{\psi}_h) \in \IR{2n_p} $ given by 
\begin{equation}
\begin{aligned}
	\bm{u}_{h}&=\begin{bmatrix}
	\bm{u}_{h,L} \\ \bm{u}_{h,R}\end{bmatrix},
	&
	\bm{\psi}_{h}&=\begin{bmatrix}
	\bm{\psi}_{h,L} \\ \bm{\psi}_{h,R}\end{bmatrix},
	&
	\bm{u}&=\begin{bmatrix}
	\bm{u}_{L} \\ \bm{u}_{R}\end{bmatrix},
	&
	\bm{\psi}&=\begin{bmatrix}
	\bm{\psi}_{L} \\ \bm{\psi}_{R}\end{bmatrix},
	&
	\bm{f}&=\begin{bmatrix}
	\bm{f}_{L} \\ \bm{f}_{R}\end{bmatrix},
	&
	\bm{g}&=\begin{bmatrix}
	\bm{g}_{L} \\ \bm{g}_{R}\end{bmatrix},
\end{aligned}
\end{equation}
\begin{align}
	\mathbb{E}\left(\bm{u}_{h}\right)&=\begin{bmatrix}
	\left(\R_{\ell L}^{T}\T_{\ell L}^{(D)}-\D_{\ell L}^{T}\right)\left(\R_{\ell L}\bm{u}_{h,L}-u_{D}\right)\\
	\R_{r R}^{T}\left(\D_{r R}\bm{u}_{h,R}- {u}_{N}\right)
	\end{bmatrix}, \quad
	\mathbb{F}\left(\bm{\psi}_{h}\right)=\begin{bmatrix}
	\left(\R_{\ell L}^{T}\T_{\ell L}^{(D)}-\D_{\ell L}^{T}\right)\left(\R_{\ell L}\bm{\psi}_{h,L}-\psi_{D}\right)\\
	\R_{r R}^{T}\left(\D_{r R}\bm{\psi}_{h,R}-{\psi}_{N}\right)
	\end{bmatrix},
\end{align}	
and the matrices $ \mathbb{A},\mathbb{B},\mathbb{H},\mathbb{D}^{(2)} \in \IRtwo{2n_p}{2n_p} $ with block entries
\begin{equation}
	\begin{aligned}
		\mathbb{A}_{11}&=\begin{bmatrix}
		\R_{r L}\\
		\D_{r L}
		\end{bmatrix}^{T}\begin{bmatrix}
		\T_{r L}^{(1)} & \T_{r L}^{(3)}\\
		\T_{r L}^{(2)} & \T_{r L}^{(4)}
		\end{bmatrix}\begin{bmatrix}
		\R_{r L}\\
		\D_{r L}
		\end{bmatrix},
		& \mathbb{B}_{11}&=\begin{bmatrix}
		\R_{r L}\\
		\D_{r L}
		\end{bmatrix}^{T}\begin{bmatrix}
		\T_{r L}^{(1)} & \T_{r L}^{(2)}+1\\
		\T_{r L}^{(3)}-1 & \T_{r L}^{(4)}
		\end{bmatrix}\begin{bmatrix}
		\R_{r L}\\
		\D_{r L}
		\end{bmatrix},
		\\
		\mathbb{A}_{12}&=\begin{bmatrix}
		\R_{r L}\\
		\D_{r L}
		\end{bmatrix}^{T}\begin{bmatrix}
		\T_{r L}^{(1)} & \T_{r L}^{(3)}\\
		\T_{r L}^{(2)} & \T_{r L}^{(4)}
		\end{bmatrix}\begin{bmatrix}
		-\R_{\ell R}\\
		\D_{\ell R}
		\end{bmatrix},&\mathbb{B}_{12}&=\begin{bmatrix}
		\R_{r L}\\
		\D_{r L}
		\end{bmatrix}^{T}\begin{bmatrix}
		\T_{r L}^{(1)} & \T_{r L}^{(2)}+1\\
		\T_{r L}^{(3)}-1 & \T_{r L}^{(4)}
		\end{bmatrix}\begin{bmatrix}
		-\R_{\ell R}\\
		\D_{\ell R}
		\end{bmatrix},		
		\\
		\mathbb{A}_{21}&=\begin{bmatrix}
		\R_{\ell R}\\
		\D_{\ell R}
		\end{bmatrix}^{T}\begin{bmatrix}
		\T_{\ell R}^{(1)} & \T_{\ell R}^{(3)}\\
		\T_{\ell R}^{(2)} & \T_{\ell R}^{(4)}
		\end{bmatrix}\begin{bmatrix}
		-\R_{r L}\\
		\D_{r L}
		\end{bmatrix},&\mathbb{B}_{21}&=\begin{bmatrix}
		\R_{\ell R}\\
		\D_{\ell R}
		\end{bmatrix}^{T}\begin{bmatrix}
		\T_{\ell R}^{(1)} & \T_{\ell R}^{(2)}+1\\
		\T_{\ell R}^{(3)}-1 & \T_{\ell R}^{(4)}
		\end{bmatrix}\begin{bmatrix}
		-\R_{r L}\\
		\D_{r L}
		\end{bmatrix},
		\\
		\mathbb{A}_{22}&=\begin{bmatrix}
		\R_{\ell R}\\
		\D_{\ell R}
		\end{bmatrix}^{T}\begin{bmatrix}
		\T_{\ell R}^{(1)} & \T_{\ell R}^{(3)}\\
		\T_{\ell R}^{(2)} & \T_{\ell R}^{(4)}
		\end{bmatrix}\begin{bmatrix}
		\R_{\ell R}\\
		\D_{\ell R}
		\end{bmatrix},&\mathbb{B}_{22}&=\begin{bmatrix}
		\R_{\ell R}\\
		\D_{\ell R}
		\end{bmatrix}^{T}\begin{bmatrix}
		\T_{\ell R}^{(1)} & \T_{\ell R}^{(2)}+1\\
		\T_{\ell R}^{(3)}-1 & \T_{\ell R}^{(4)}
		\end{bmatrix}\begin{bmatrix}
		\R_{\ell R}\\
		\D_{\ell R}
		\end{bmatrix},
		\\
		\mathbb{H}&=\begin{bmatrix}
		\H_{L}\\
		& \H_{R}
		\end{bmatrix},
		&\mathbb{D}^{(2)}&=\begin{bmatrix}
		\D_{L}^{(2)}\\
		& \D_{R}^{(2)}
		\end{bmatrix},
		\\
		\mathbb{M}&=\begin{bmatrix}
		\M_L - \M_L^T\\
		& \M_R - \M_R^T
		\end{bmatrix}.
	\end{aligned}
\end{equation}
Note that for adjoint consistent schemes, it can be shown, using \cref{eq:adjoint consistency conditions}, that 
\begin{equation}
	\mathbb{A}_{12}^T = \mathbb{B}_{21},\quad \text{and}\quad \mathbb{A}_{21}^T = \mathbb{B}_{12}.
\end{equation} 
The discrete residuals corresponding to the steady version of \cref{eq:diffusion problem} and the adjoint problem \cref{eq:adjoint problem} can now be written, respectively, as
\begin{align}
	R_{h,u}\left(\bm{u}_{h}\right)&=-\mathbb{D}^{(2)}\bm{u}_{h}-\bm{f}+\mathbb{H}^{-1}\mathbb{A}\bm{u}_{h}+\mathbb{H}^{-1}\mathbb{E}\left(\bm{u}_{h}\right)=\bm{0},
	\\
	R_{h,\psi}\left(\bm{\psi}_{h}\right)&=-\mathbb{D}^{(2)}\bm{\psi}_{h}-\bm{g} +\mathbb{H}^{-1}\mathbb{B}\bm{\psi}_{h}+\mathbb{H}^{-1}\mathbb{F}\left(\bm{\psi}_{h}\right)- \mathbb{H}^{-1}\mathbb{M}\bm{\psi}_h=\bm{0}.
\end{align} 
Before stating the main result, we present an assumption regarding the primal and adjoint solution accuracy.
\begin{assumption}\label{assu:solution accuracy}
	We assume that unique numerical solutions for the steady version of the discrete primal equation \cref{eq:SBP-SAT discretization} and the discrete adjoint problem \cref{eq:SBP-SAT adjoint} exist, and these solutions are at least order $ h^{p+1} $ accurate in the maximum norm, \ie, $\norm{\bm{u}- \bm{u}_h}_{\infty} = \fnc{O}(h^{\ge p + 1})$ and $\norm{\bm{\psi}- \bm{\psi}_h}_{\infty}  = \fnc{O}(h^{\ge p+1})$.
\end{assumption}
\cref{assu:solution accuracy} is not necessary if pointwise stability of the SBP-SAT discretization for the Poisson problem can be demonstrated, see \cite{gustafsson1981convergence,svard2006order,svard2019convergence,hicken2012output,hicken2011superconvergent,penner2020superconvergent}. Numerical experiments with adjoint consistent discretizations show a primal and adjoint solution convergence rate of $ p+1 $ when a degree $ p $ diagonal-norm wide-stencil second-derivative SBP operator is used. In contrast, a primal and adjoint solution convergence rate of $ p+2 $ is observed when a degree $ p+1 $ order-matched narrow-stencil second-derivative SBP operator is used with adjoint consistent SATs. The block-norm wide- and narrow-stencil second-derivative operators, on the other hand, exhibit a primal solution convergence rate of $ 2p $.

We present the order of accuracy of the discrete functional approximating $ \fnc{I}(\fnc{U})=\fnc{I}(\psi) $ in the following theorem. 
\begin{theorem}\label{thm:functional superconvergence}
	Let the primal solution of the steady version of \cref{eq:diffusion problem} and the adjoint solution of \cref{eq:adjoint problem} be $ \fnc{U}, \psi \in \fnc{C}^{2p+2}(\Omega)$, respectively, the variable coefficient in \cref{eq:diffusion problem} and \cref{eq:adjoint problem} be $ \lambda \in \fnc{C}^{2p+1}(\Omega) $, and the source terms in \cref{eq:diffusion problem} and \cref{eq:adjoint problem} be $\fnc{F},\fnc{G} \in \fnc{C}^{2p}(\Omega)$, respectively. If $ \bm{u}_h,\bm{\psi}_h \in \IR{n_e n_p} $ are solutions to consistent discretizations of the steady version of \cref{eq:diffusion problem} and \cref{eq:adjoint problem}, respectively, and \cref{assu:solution accuracy} holds, then the discrete functionals \cref{eq:functional discrete 1,eq:functional discrete 2}	
	are order $ h^{2p} $ accurate approximations to the compatible linear functional $ \fnc{I}(\fnc{U})=\fnc{I}(\psi) $ given by \cref{eq:functional 1} and \cref{eq:functional 2}, \ie, 
	\begin{align} 
		\fnc{I}(\fnc{U})-I_h(\bm{u}_h)&=\order{h^{2p}}, \label{eq:functional error thm 1}\\
		\fnc{I}(\psi)-I_h(\bm{\psi}_h)&=\order{h^{2p}}.
		\label{eq:functional error thm 2}
	\end{align}
\end{theorem}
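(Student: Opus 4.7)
The plan is to exploit the adjoint-consistent SBP-SAT structure so that the $\order{h^{p}}$ pointwise truncation errors of the primal and adjoint discretizations combine, through a discrete dual pairing, into an $\order{h^{2p}}$ contribution to the functional error. I begin by splitting
\begin{equation*}
\fnc{I}(\fnc{U}) - I_{h}(\bm{u}_h) = \bigl[\fnc{I}(\fnc{U}) - I_{h}(\bm{u})\bigr] + \bigl[I_{h}(\bm{u}) - I_{h}(\bm{u}_h)\bigr],
\end{equation*}
where the data $u_D,u_N,\psi_D,\psi_N$ cancel in the second bracket by linearity of $I_h$ in its grid-function argument. The first bracket is $\order{h^{2p}}$ because $\bm{g}^T\mathbb{H}\bm{u}$ is the diagonal-norm SBP quadrature of $\int_{\Omega}\fnc{G}\fnc{U}\dd{\Omega}$ with degree-$2p-1$ accuracy, and the remaining boundary corrections are evaluated with extrapolation and derivative-extrapolation operators of order at least $h^{\tau+1}\geq h^{p+1}$ acting on exact smooth primal and adjoint data.

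The main task is the second bracket. I pair the primal residual $R_{h,u}(\bm{u})=\bm{\tau}_u$ (the result of substituting the exact solution into the scheme) with $\bm{\psi}_h^T\mathbb{H}$ on the left, pair $R_{h,\psi}(\bm{\psi}_h)=\bm{0}$ with $\bm{u}^T\mathbb{H}$, and subtract. Applying identity \cref{eq:D2 decomposition 3} converts $\bm{\psi}_h^T\mathbb{H}\mathbb{D}^{(2)}\bm{u}$ into $\bm{u}^T\mathbb{H}\mathbb{D}^{(2)}\bm{\psi}_h$ plus boundary/interface remainders; the antisymmetric $\M_k-\M_k^T$ term in \cref{eq:D2 decomposition 3} cancels the $\mathbb{M}\bm{\psi}_h$ term appearing in $R_{h,\psi}$, while the interface blocks $\mathbb{A},\mathbb{B}$ and physical-boundary blocks $\mathbb{E},\mathbb{F}$ reorganize, via the adjoint-consistency relations \cref{eq:adjoint consistency conditions} (which enforce $\mathbb{A}_{12}^{T}=\mathbb{B}_{21}$, $\mathbb{A}_{21}^{T}=\mathbb{B}_{12}$, and $\M_k=\M_k^T$), into exactly the mismatch $I_h(\bm{u})-I_h(\bm{\psi}_h)$. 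Combined with the discrete compatibility $I_h(\bm{u}_h)=I_h(\bm{\psi}_h)$ that adjoint consistency of the scheme guarantees at the discrete solutions, this yields a discrete dual identity of the form
\begin{equation*}
I_h(\bm{u}) - I_h(\bm{u}_h) = \bm{\psi}_h^T\mathbb{H}R_{h,u}(\bm{u}) - \bm{u}^T\mathbb{H}R_{h,\psi}(\bm{\psi}) + \order{h^{2p}}.
\end{equation*}

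To close the argument, I estimate each bilinear pairing separately. Writing $\bm{\psi}_h=\bm{\psi}+\bm{\delta\psi}$ with $\|\bm{\delta\psi}\|_\infty = \order{h^{p+1}}$ by \cref{assu:solution accuracy}, the piece $\bm{\delta\psi}^T\mathbb{H}\bm{\tau}_u$ is bounded by $\order{h^{p+1}}\cdot\order{h^{p}}$ against the $\order{1}$-total weights of $\mathbb{H}$, contributing $\order{h^{2p+1}}$. For the remaining $\bm{\psi}^T\mathbb{H}\bm{\tau}_u$ with smooth $\bm{\psi}$, I invoke the standard diagonal-norm SBP estimate: the truncation is $\order{h^{2p}}$ at interior nodes and only $\order{h^{p}}$ at the $\order{1}$ boundary nodes whose $\mathbb{H}$-weights are $\order{h}$, and a Lagrange-remainder / Taylor argument in the spirit of \cite{hicken2011superconvergent,worku2020simultaneous} sharpens the aggregate boundary contribution to $\order{h^{2p}}$. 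Applying the mirror argument to $\bm{u}^T\mathbb{H}R_{h,\psi}(\bm{\psi})$ completes \cref{eq:functional error thm 1}, and \cref{eq:functional error thm 2} follows by swapping the roles of primal and adjoint and using the second form \cref{eq:functional discrete 2} of the functional. The chief obstacle I expect is the discrete dual identity in the second paragraph: the algebra requires tracking every interior-interface and physical-boundary term produced by \cref{eq:D2 decomposition 3} and invoking \cref{eq:adjoint consistency conditions} at the right moments so that the $\M_k-\M_k^T$ and $\mathbb{M}$ contributions cancel and the remaining boundary pieces telescope cleanly into the functional mismatch $I_h(\bm{u})-I_h(\bm{\psi}_h)$.
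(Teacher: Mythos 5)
Your route is the classical dual-weighted-residual one and is genuinely different from the paper's: the paper first replaces $\fnc{U}$, $\lambda\,\partial\fnc{U}/\partial x$, $\psi$ and $\lambda\,\partial\psi/\partial x$ by degree-$p$ polynomial surrogates (charging an $\order{h^{2p}}$ cost to the quadrature), so that $\R_{\gamma k}$, $\D_{\gamma k}$ and $\DDk$ act \emph{exactly}, the primal truncation error vanishes identically, and the entire functional error is carried by the single term $\{R_{h,\psi}(\bm{\psi})\}^{T}\mathbb{H}(\bm{u}_h-\bm{u})$, which is controlled by consistency of the adjoint discretization together with \cref{assu:solution accuracy}. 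You instead keep general smooth data and try to pair $\order{h^{p}}$ truncation errors against smooth dual weights. That route can in principle work, but as written it has two concrete gaps.

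First, your opening claim that $\fnc{I}(\fnc{U})-I_h(\bm{u})=\order{h^{2p}}$ is false. The boundary correction $\psi_D\T_{\ell L}^{(D)}\bigl(\R_{\ell L}\bm{u}_L-u_D\bigr)$ involves a penalty $\T_{\ell L}^{(D)}=\order{h^{-1}}$ (it is built from $\V_k^{+}$) acting on an extrapolation error of order $h^{\tau+1}$, so for operators without boundary nodes it is only $\order{h^{\tau}}$ with $\tau\ge p$; likewise $\psi_D\bigl(\D_{\ell L}\bm{u}_L-[\lambda\,\partial\fnc{U}/\partial x\,n_\gamma]_{\Gamma^D}\bigr)=\order{h^{p+1}}$ for every operator. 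Neither is $\order{h^{2p}}$ once $p\ge 2$, so these terms cannot be discarded in your first bracket; they must survive into the dual identity and cancel against the boundary SAT contributions there. Second, the crux of your argument --- that $\bm{\psi}^{T}\mathbb{H}\bm{\tau}_u$, with $\bm{\tau}_u$ containing the $\order{h^{p}}$ boundary-closure truncation error of $\DDk$, ``sharpens'' to $\order{h^{2p}}$ --- is asserted rather than proved, and it does not follow from the references you invoke: \cite{hicken2011superconvergent} treats discretizations built from first-derivative operators, where the pairing reduces to moment conditions on $\Q_k$, and \cite{worku2020simultaneous} itself uses the polynomial-reduction device. For the narrow-stencil operator the analogous statement requires the bilinear form $\bm{\psi}^{T}\M_k\bm{u}$ to be a $2p$-accurate quadrature for $\int\psi'\lambda\,\partial\fnc{U}/\partial x$, a property that is not part of \cref{def:D2} and that you would have to establish separately for the operators of \cite{mattsson2004summation,del2015SecondDerivative,mattsson2013solution}. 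The paper's polynomial reduction exists precisely to avoid having to prove this estimate; if you wish to keep your route, that quadrature property is the missing lemma you need to supply.
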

\begin{proof}
	It is sufficient to show that the result holds for a domain tessellated by two elements, $ \Omega_L $ and $ \Omega_R $, as the interface SATs considered couple immediate neighboring elements only. We let the Dirichlet and Neumann boundary conditions be implemented at the left and right boundaries of the domain. The boundary terms in both forms of the functional, \cref{eq:functional 1} and \cref{eq:functional 2}, involve the products $ \psi \lambda {\partial}\fnc{U}/{\partial}x $ and $ \fnc{U} \lambda {\partial}\fnc{\psi}/{\partial}x$. Using the continuity of $\psi$, $ \fnc{U}$, and $\lambda$, we can approximate $ (\psi \lambda {\partial}\fnc{U}/{\partial}x) \in \fnc{C}^{2p+1}(\Omega)$ and $(\fnc{U} \lambda {\partial}\fnc{\psi}/{\partial}x)  \in \fnc{C}^{2p+1}(\Omega)$ at the boundary nodes by degree $ \le 2p $ polynomials. The integrands in the volume integrals of \cref{eq:functional 1} and \cref{eq:functional 2} are $ 2p $ times differentiable, \ie, $ (\fnc{G}\fnc{U}), (\psi\fnc{F}) \in\fnc{C}^{2p}(\Omega)$. Since integrals are approximated by quadratures of order $ h^{2p} $, replacing $ (\fnc{G}\fnc{U}), (\psi\fnc{F}) \in\fnc{C}^{2p}(\Omega)$ by $ (\widetilde{\fnc{G}\fnc{U}}), (\widetilde{\psi\fnc{F}})\in\poly{2p-1} $ in the functionals introduces an error of order $ h^{2p} $. Therefore, we consider $ \widetilde{\fnc{U}}, (\widetilde{\lambda{\partial}\fnc{U}/{\partial}x}) \in \poly{p} $ to be at least order $ h^{p+1} $ approximations of $ \fnc{U} $ and $ {\lambda{\partial}\fnc{U}/{\partial}x} $, respectively, and thus $ \widetilde{\fnc{F}} \in\poly{p-1} $ due to the steady version of the primal PDE, \cref{eq:diffusion problem}. Similarly, considering $ \widetilde{\psi}, (\widetilde{\lambda{\partial}\psi/{\partial}x}) \in \poly{p} $ to be at least order $ h^{p+1} $ approximations of $ \psi$ and $ ({\lambda{\partial}\psi/{\partial}x}) $, respectively, gives $ \widetilde{\fnc{G}} \in \poly{p-1}$ due to the adjoint PDE, \cref{eq:adjoint problem}. For primal and adjoint consistent discretizations, the numerical primal and adjoint solutions are order $ h^{\ge p+1} $ accurate despite the polynomial approximations; hence, it is sufficient to show that either \cref{eq:functional error thm 1} or \cref{eq:functional error thm 2} hold for the polynomial integrands instead of the general continuous functions. Note that compatible functionals satisfy $ \fnc{I}(\fnc{U}) = \fnc{I}(\psi) $, and we enforced the condition $ I_h(\bm{u}_h) = I_h(\bm{\psi}_h) $ to find the discrete adjoint problem; hence, $  \fnc{I}(\fnc{U})-I_h(\bm{u}_h) = \fnc{I}(\psi) - I_h(\bm{\psi}_h)$. For the rest of the proof, we drop the tilde sign used to distinguish polynomials from the general continuous functions.
	
	If $ {\fnc{U}}\in\poly{p}  $ and $({\lambda{\partial}\fnc{U}/{\partial}x}) \in \poly{p}$, then we discretize \cref{eq:functional 1} to find
	\begin{equation} \label{eq:func exact 1}
		{\cal I}({\cal U})={\bm{u}}_{L}^{T}\H_{L}\bm{g}_{L}+\bm{g}_{R}^{T}\H_{R}{\bm{u}}_{R}-\psi_{D}w_{\ell L}+\psi_{N}u_{r R}+{\cal O}\left(h^{2p}\right),
	\end{equation}
	where $ w_{\ell L} = [\lambda\frac{\partial{{\cal U}}}{\partial x}n_{\ell}]_{\Gamma^{D}} $ and $ u_{r R} = {\fnc{U}}|_{\Gamma^N} $. Subtracting \cref{eq:functional discrete 1} from \cref{eq:func exact 1} and rearranging, we have
	\begin{equation}\label{eq:func proof 1}
		\begin{aligned}
			\mathcal{I}\left(\mathcal{U}\right)&=I_h{\bm{u}_h}-\bm{g}_{L}^{T}\H_{L}\left(\bm{u}_{h,L}-{\bm{u}}_{L}\right) +\psi_{D}\left(\D_{\ell L}\bm{u}_{h,L}-w_{\ell L}\right)
			-\psi_{D}\T_{\ell L}^{(D)}\left(\R_{\ell L}\bm{u}_{h,L}-u_{\ell L}\right) 
			\\& \quad -\bm{g}_{R}^{T}\H_{R}\left(\bm{u}_{h,R}-{\bm{u}}_{R}\right) -\psi_{N}\left(\R_{r R}\bm{u}_{h,R}-u_{r R}\right)+{\cal O}\left(h^{2p}\right).
		\end{aligned}
	\end{equation}
	Since $ {\fnc{U}}\in\poly{p}$, the $ \Rgk $ and $ \Dgk $ matrices are exact when applied to the restriction of $ \fnc{U} $ to the grid points, \eg, $ \R_{r R}{\bm{u}}_R = u_{r R} $ and $ \D_{\ell L}{\bm{u}}_L = w_{\ell L} $. Applying this property in \cref{eq:func proof 1} and simplifying we obtain
	\begin{equation} \label{eq:func proof 2}
		\mathcal{I}\left(\mathcal{U}\right)=I_{h}\left(\bm{u}_{h}\right)-\bm{g}^{T}\mathbb{H}\left(\bm{u}_{h}-{\bm{u}}\right)-\left[\begin{array}{c}
		\psi_{D}\T_{\ell L}^{(D)}\R_{\ell L}-\psi_{D}\D_{\ell L}\\
		\psi_{N}\R_{r R}
		\end{array}\right]\left(\bm{u}_{h}-{\bm{u}}\right)+{\cal O}\left(h^{2p}\right).
	\end{equation}
	Adding $ \bm{\psi}^{T}\mathbb{H}R_{h,u}\left(\bm{u}_{h}\right)=0 $ to the RHS of \cref{eq:func proof 2} and rearranging terms, we have
	\begin{equation} \label{eq:func proof 3}
		\begin{aligned}
			\mathcal{I}\left(\mathcal{U}\right)&=I_{h}\left(\bm{u}_{h}\right)-\bm{\psi}^{T}\mathbb{H}\mathbb{D}^{(2)}{\bm{u}}-\bm{\psi}^{T}\mathbb{H}\bm{f}+\bigg\{-\bm{g}^{T}-\bm{\psi}^{T}\mathbb{H}\mathbb{D}^{(2)}\mathbb{H}^{-1}
			-\left[\begin{array}{c}
			\psi_{D}\T_{\ell L}^{(D)}\R_{\ell L}-\psi_{D}\D_{\ell L}\\
			\psi_{N}\R_{r R}
			\end{array}\right]\mathbb{H}^{-1}
			\\&\quad
			+\bm{\psi}^{T}\mathbb{A}\mathbb{H}^{-1}
			+\bm{\psi}^{T}\left[\begin{array}{c}
			\R_{\ell L}^{T}\T_{\ell L}^{(D)}\R_{\ell L}-\D_{\ell L}^{T}\R_{\ell L}\\
			\R_{r R}^{T}\D_{r R}
			\end{array}\right]\mathbb{H}^{-1}\bigg\}\mathbb{H}\left(\bm{u}_{h}-{\bm{u}}\right)+{\cal O}\left(h^{2p}\right).
		\end{aligned}
	\end{equation} 
	Using the identity in \cref{eq:D2 decomposition 3} we can write 
	\begin{equation} \label{eq:D2 decomposition 4}
	\begin{aligned}
		-\mathbb{H}\mathbb{D}^{(2)}\mathbb{H}^{-1}&=-\left(\mathbb{D}^{(2)}\right)^{T}+\begin{bmatrix}
		\D_{\ell L}^{T}\R_{\ell L}-\R_{\ell L}^{T}\D_{\ell L}& \bm{0}\\
		\bm{0} & \D_{r R}^{T}\R_{r R}-\R_{r R}^{T}\D_{r R}	\end{bmatrix}\mathbb{H}^{-1}
		\\
		&\quad +\begin{bmatrix}
		\D_{r L}^{T}\R_{r L}-\R_{r L}^{T}\D_{r L} & \bm{0}\\
		\bm{0} & \D_{\ell R}^{T}\R_{\ell R}-\R_{\ell R}^{T}\D_{\ell R}
		\end{bmatrix}\mathbb{H}^{-1} + \mathbb{M}\mathbb{H}^{-1},
	\end{aligned}
	\end{equation}
	which, after substituting into \cref{eq:func proof 3} and simplifying, gives
	\begin{equation} \label{eq:func proof 4}
	\begin{aligned}
		\mathcal{I}\left(\mathcal{U}\right)&=I_{h}\left(\bm{u}_{h}\right)-\bm{\psi}^{T}\mathbb{H}\left[\mathbb{D}^{(2)}{\bm{u}}+\bm{f}\right]+\bigg\{-\bm{g}^{T}-\bm{\psi}^{T}\left(\mathbb{D}^{(2)}\right)^{T}
		\\
		&\quad +\bm{\psi}^{T}\mathbb{B}^{T}\mathbb{H}^{-1}+\left[\mathbb{F}\left(\bm{\psi}\right)\right]^{T}\mathbb{H}^{-1}-\bm{\psi}^{T}\mathbb{M}^T\mathbb{H}^{-1}\bigg\}\mathbb{H}\left(\bm{u}_{h}-{\bm{u}}\right)+{\cal O}\left(h^{2p}\right).
	\end{aligned}
	\end{equation}
	Since $ {\fnc{U}}\in \poly{p}$, the second term on the RHS vanishes due to the primal PDE. The third term is $ \fnc{O}(h^{\ge 2p+1}) $ due to the consistency of the adjoint discretization, the fact that $ \mathbb{H}  $ is $ \fnc{O}(h) $, and \cref{assu:solution accuracy}. Therefore, $ \mathcal{I}\left(\mathcal{U}\right)=I_{h}\left(\bm{u}_{h}\right)+{\cal O}\left(h^{2p}\right)$.
	
	Alternatively, if we consider $ {\psi}\in\poly{p}  $ and $({\lambda{\partial}\psi/{\partial}x}) \in \poly{p}$, we start by discretizing the second form of the functional, \cref{eq:functional 2},
	\begin{equation} \label{eq:func exact 2}
		{\cal I}(\psi)={\bm{\psi}}_{L}^{T}\H_{L}\bm{f}_{L}+{\bm{\psi}}_{R}^{T}\H_{R}\bm{f}_{R}-u_{D}z_{\ell L}+u_{N}\psi_{r R}+{\cal O}\left(h^{2p}\right),
	\end{equation}
	where $ z_{\ell L} = \left[\lambda\frac{\partial{\psi}}{\partial x}n_{\ell}\right]_{\Gamma^{D}} $ and $ \psi_{r R} = {\cal {\psi}}|_{\Gamma^{N}} $. Subtracting \cref{eq:functional discrete 2} from \cref{eq:func exact 2} and rearranging, we obtain 
	\begin{equation}
		\begin{aligned}
			\mathcal{I}\left(\psi\right)&= I_h(\bm{\psi}_h)-\bm{f}_{L}^{T}\H_{L}\left(\bm{\psi}_{h,L}-{\bm{\psi}}_{L}\right)+u_{D}\left(\D_{\ell L}\bm{\psi}_{h,L}-z_{\ell L}\right)
			-u_{D}\T_{\ell L}^{(D)}\left(\R_{\ell L}\bm{\psi}_{h,L}-\psi_{\ell L}\right)
			\\&\quad
			-\bm{f}_{R}^{T}\H_{R}\left(\bm{\psi}_{h,R}-{\bm{\psi}}_{R}\right)
			-u_{N}\left(\R_{r R}\bm{\psi}_{h,R}-\psi_{r R}\right)+{\cal O}\left(h^{2p}\right).
		\end{aligned}
	\end{equation}
	Using the accuracies of $ \Rgk $ and $ \Dgk $ to approximate the boundary terms, adding $ \bm{u}^{T}\mathbb{H}R_{h,\psi}\left(\bm{\psi}_{h}\right)=0 $, and simplifying leads to
	\begin{equation}
		\begin{aligned}
			\mathcal{I}\left(\psi\right)&=I_{h}\left(\bm{\psi}_{h}\right)-\left[\bm{u}^{T}\mathbb{H}\mathbb{D}^{(2)}{\bm{\psi}}+\bm{u}^{T}\mathbb{H}\bm{g}\right]+\bigg\{-\bm{f}^{T}-\bm{u}^{T}\mathbb{H}\mathbb{D}^{(2)}\mathbb{H}^{-1}
			-\left[\begin{array}{c}
			u_{D}\T_{\ell L}^{(D)}\R_{\ell L}-u_{D}\D_{\ell L}\\
			u_{N}\R_{r R}
			\end{array}\right]\mathbb{H}^{-1}
			\\&\quad
			+\bm{u}^{T}\mathbb{B}\mathbb{H}^{-1}
			+\bm{u}^{T}\left[\begin{array}{c}
			\R_{\ell L}^{T}\T_{\ell L}^{(D)}\R_{\ell L}-\D_{\ell L}^{T}\R_{\ell L}\\
			\R_{r R}^{T}\D_{r R}
			\end{array}\right]\mathbb{H}^{-1}\bigg\}\mathbb{H}\left(\bm{\psi}_{h}-{\bm{\psi}}\right)- \bm{u}^T\mathbb{M}\bm{\psi}_h + {\cal O}\left(h^{2p}\right).
		\end{aligned}
	\end{equation}
	Using the identity \cref{eq:D2 decomposition 4} and simplifying, we find
	\begin{equation}\label{eq:func proof 5}
	\begin{aligned}
		\mathcal{I}\left(\psi\right)&=I_{h}\left(\bm{\psi}_{h}\right)-\bm{u}^{T}\mathbb{H}\left[\mathbb{D}^{(2)}{\bm{\psi}}+\bm{g}\right]+\bigg\{-\bm{f}^{T}-\bm{u}^{T}\left(\mathbb{D}^{(2)}\right)^{T}
		+\bm{u}^{T}\mathbb{A}^{T}\mathbb{H}^{-1}
		\\&\quad
		+\left[\mathbb{E}\left(\bm{u}\right)\right]^{T}\mathbb{H}^{-1} -\bm{u}^T\mathbb{M}^T\mathbb{H}^{-1}\bigg\}\mathbb{H}\left(\bm{\psi}_{h}-{\bm{\psi}}\right) - \bm{u}^T\mathbb{M}\bm{\psi}_h+{\cal O}\left(h^{2p}\right).
	\end{aligned}
	\end{equation}
	Noting that $ \mathbb{M} + \mathbb{M}^T = \bm{0} $, we have 
		\begin{equation}\label{eq:func proof 6}
	\begin{aligned}
	\mathcal{I}\left(\psi\right)&=I_{h}\left(\bm{\psi}_{h}\right)-\bm{u}^{T}\mathbb{H}\left[\mathbb{D}^{(2)}{\bm{\psi}}+\bm{g}\right]+\bigg\{-\bm{f}^{T}-\bm{u}^{T}\left(\mathbb{D}^{(2)}\right)^{T}
	+\bm{u}^{T}\mathbb{A}^{T}\mathbb{H}^{-1}
	\\&\quad
	+\left[\mathbb{E}\left(\bm{u}\right)\right]^{T}\mathbb{H}^{-1} \bigg\}\mathbb{H}\left(\bm{\psi}_{h}-{\bm{\psi}}\right) - \bm{u}^T\mathbb{M}\bm{\psi}+{\cal O}\left(h^{2p}\right).
	\end{aligned}
	\end{equation}
	The second and fourth terms on the RHS of \cref{eq:func proof 6} vanish due to the adjoint PDE and the adjoint consistency requirement that $ \mathbb{M}=\bm{0} $, respectively. The third term is $ \fnc{O}(h^{\ge 2p+1}) $ due to the consistency of the primal discretization, the scaling of the norm matrix, and \cref{assu:solution accuracy}. Therefore, the estimates in \cref{eq:functional error thm 1,eq:functional error thm 2} hold.
	\qed
\end{proof}

\begin{remark}
	For implementations with the block-norm wide- or narrow-stencil  second-derivative SBP operators of the type presented in \cite{mattsson2013solution}, the estimate in \cref{eq:functional error thm 1} is attained even for adjoint inconsistent schemes. Note that for these types of operator, we have $ \norm{\bm{u}_{h,k} -\bm{u}_k}_\infty = \fnc{O}(h^{2p})$ in \cref{eq:func proof 4}. The block-norm narrow-stencil operators have $ \M_k \ne \M_k^T $; hence, they lead to adjoint inconsistent schemes even when coupled with adjoint consistent SATs.
\end{remark}
\subsection{Stability Analysis}\label{sec:energy analysis}
We use the energy method to analyze the stability of the SBP-SAT discretization of \cref{eq:diffusion problem}. The residual of the discretization for the homogeneous version of the problem, \ie, $ \fnc{F}=0 $, $ \fnc{U}_D = 0$, and $ \fnc{U}_N = 0$, summed over all elements can be written as 
\begin{equation}
	\begin{aligned}
		R_{h}(\bm{u}_{h},\bm{v})&=-\sum_{\Omega_{k}\in\mathcal{T}_{h}}\bm{v}_{k}^{T}\M_{k}\bm{u}_{h,k}-\sum_{\gamma\subset\Gamma^{D}}\left[\begin{array}{c}
		\R_{\gamma k}\bm{v}_{k}\\
		\D_{\gamma k}\bm{v}_{k}
		\end{array}\right]^{T}\left[\begin{array}{cc}
		\T_{\gamma k}^{(D)} & -1\\
		-1 & 0
		\end{array}\right]\left[\begin{array}{c}
		\R_{\gamma k}\bm{u}_{h,k}\\
		\D_{\gamma k}\bm{u}_{h,k}
		\end{array}\right]\\&\quad-\sum_{\gamma\subset\Gamma^{I}}\left[\begin{array}{c}
		\R_{\gamma k}\bm{v}_{k}\\
		\R_{\gamma v}\bm{v}_{v}\\
		\D_{\gamma k}\bm{v}_{k}\\
		\D_{\gamma v}\bm{v}_{v}
		\end{array}\right]^{T}\left[\begin{array}{cccc}
		\T_{\gamma k}^{(1)} & -\T_{\gamma k}^{(1)} & \T_{\gamma k}^{(3)}-1 & \T_{\gamma k}^{(3)}\\
		-\T_{\gamma v}^{(1)} & \T_{\gamma v}^{(1)} & \T_{\gamma v}^{(3)} & \T_{\gamma v}^{(3)}-1\\
		\T_{\gamma k}^{(2)} & -\T_{\gamma k}^{(2)} & \T_{\gamma k}^{(4)} & \T_{\gamma k}^{(4)}\\
		-\T_{\gamma v}^{(2)} & \T_{\gamma v}^{(2)} & \T_{\gamma v}^{(4)} & \T_{\gamma v}^{(4)}
		\end{array}\right]\left[\begin{array}{c}
		\R_{\gamma k}\bm{u}_{h,k}\\
		\R_{\gamma v}\bm{u}_{h,v}\\
		\D_{\gamma k}\bm{u}_{h,k}\\
		\D_{\gamma v}\bm{u}_{h,v}
		\end{array}\right]
	\end{aligned}
\end{equation}
for $ \bm{v}\in \IR{n_e n_p} $. In \cite{yan2018interior,worku2020simultaneous}, a factorization of $ \M_k $ for wide-stencil operators allowed the use of the borrowing trick and enabled $ R_h(\bm{u}_h,\bm{v}) $ to be written in terms of interface contributions only. However, the same factorization cannot be applied for narrow-stencil operators because $ \D_{\gamma k} $ is constructed using a modified derivative operator at the element boundaries, $ \D_{b,k} $, instead of $ \D_k $. To circumvent this, we make the following assumption.
\begin{assumption}\label{assu:Dbk}
	The $ \D_{b,k} $ matrix is invertible or can be modified such that it is invertible. 
\end{assumption}  

The invertibility requirement on $ \D_{b,k} $ is not too restrictive. In fact, all the narrow-stencil second-derivative operators in \cite{mattsson2004summation,mattsson2008discontinuous,mattsson2012summation,mattsson2013solution} either have invertible $ \D_{b,k} $ matrix or their $ \D_{b,k} $ matrix can be modified such that it is invertible. For operators that include nodes at element boundaries, the only requirement for $ \D_{b,k} $ to be invertible is that its interior diagonal entries are nonzero, \eg, $ \D_{b,k} $ can be constructed from the identity matrix by modifying the first and last rows such that these rows approximate the first derivative to degree $ \ge p+1 $. The invertibility of  $ \D_{b,k} $ matrix constructed in this manner can easily be verified using Gershgorin's theorem. For generalized narrow-stencil second-derivative operators with nodes at element boundaries, \eg, the hybrid Gauss-trapezoidal-Lobatto (HGTL) operators in \cite{del2015SecondDerivative}, a similar modification can be applied to obtain an invertible $ \D_{b,k} $ matrix. In contrast, all except the degree two hybrid Gauss-trapezoidal (HGT) operators in \cite{del2015SecondDerivative}, which do not include boundary nodes, do not yield an invertible $ \Dbk $ matrix even after applying the modification discussed. However, it is likely possible to construct HGT operators such that $ \D_{b,k} $ is invertible by enforcing a condition on the free variables during the construction of the operators. The structures of the invertible $ \D_{b,k} $ matrices of the degree two CSBP and HGT operators are,  
\begin{align*}
\left[\begin{array}{ccccccccccc}
\times & \times & \times & \times\\
& 1\\
&  & 1\\
\\
\\
&  &  &  &  & \ddots\\
\\
\\
&  &  &  &  &  &  &  & 1\\
&  &  &  &  &  &  &  &  & 1\\
&  &  &  &  &  &  & \times & \times & \times & \times
\end{array}\right], && 
\left[\begin{array}{ccccccccccc}
\times & \times & \times & \times & \times\\
\times & \times & \times & \times & \times\\
\times & \times & \times & \times & \times\\
\times & \times & \times & \times & \times\\
&  &  &  & 1\\
&  &  &  &  & \ddots\\
&  &  &  &  &  & 1\\
&  &  &  &  &  & \times & \times & \times & \times & \times\\
&  &  &  &  &  & \times & \times & \times & \times & \times\\
&  &  &  &  &  & \times & \times & \times & \times & \times\\
&  &  &  &  &  & \times & \times & \times & \times & \times
\end{array}\right],
\end{align*}
respectively, where each row containing $ \times $ in its entries approximate the first derivative. 

Another important assumption that is required in the subsequent energy stability analysis for adjoint consistent discretizations with narrow-stencil second-derivative operators is presented below.
\begin{assumption} \label{assu:nullspace}
	The first and second-derivative SBP operators, $ \Dk $ and $ \D^{(2)}_k $, are nullspace consistent, \ie, the nonzero vectors in the nullspace of $ \Dk $ and $ \D^{(2)}_k $ are $ \fnc{N}(\Dk) = {\rm span}\{\bm{1}\}\eqqcolon \bm{v}_c$ and $ \fnc{N}(\D^{(2)}_k) = {\rm span}\{\bm{1}, \bm{x}_k\} $, respectively.
\end{assumption}
It should be noted that consistency of an SBP operator does not necessarily imply nullspace consistency and vice versa. The operators defined in \cref{def:Dk,def:D2} are consistent because they satisfy the accuracy conditions, \ie, they differentiate polynomials up to a required degree exactly \cite{svard2019convergence}. In contrast, nullspace consistency requires that the nullspaces of $ \Dk $ and $ \DDk $ exclusively contain vectors in $ {\rm span}\{\bm{1}\} $ and $ {\rm span}\{\bm{1}, \bm{x}_k\} $, respectively. SBP derivative operators are consistent by construction, and most of them are nullspace consistent as well \cite{svard2019convergence}. 

Using \cref{assu:Dbk} and enforcing the conditions necessary for conservation, \cref{eq:conservation conditions}, we can now write the sum of the residual and its transpose as
\begin{equation} \label{eq:residual energy 1}
	\begin{aligned}
		2R_{h}(\bm{u}_{h},\bm{v}_{h})&= R_{h}(\bm{u}_{h},\bm{v}_{h}) + R_{h}^T(\bm{u}_{h},\bm{v}_{h}) =
		\\ 
		&\quad-\sum_{\gamma\subset\Gamma^{I}}
		\begin{bmatrix}
		\R_{\gamma k}\bm{u}_{h,k}\\
		\R_{\gamma v}\bm{u}_{h,v}\\
		\D_{b,k}\bm{u}_{h,k}\\
		\D_{b,v}\bm{u}_{h,v}
		\end{bmatrix}^{T}\begin{bmatrix}
		2\T_{\gamma k}^{(1)} & -2\T_{\gamma k}^{(1)} & \sigma_{k}\C_{\gamma k} & -\sigma_{v}\C_{\gamma v}\\
		-2\T_{\gamma k}^{(1)} & 2\T_{\gamma v}^{(1)} & -\sigma_{k}\C_{\gamma k} & \sigma_{v}\C_{\gamma v}\\
		\sigma_{k}\C_{\gamma k}^{T} & -\sigma_{k}\C_{\gamma k}^{T} & \alpha_{\gamma k}\V_{k} & \bm{0}\\
		-\sigma_{v}\C_{\gamma v}^{T} & \sigma_{v}\C_{\gamma v}^{T} & \bm{0} & \alpha_{\gamma v}\V_{v}
		\end{bmatrix}
		\begin{bmatrix}
		\R_{\gamma k}\bm{u}_{h,k}\\
		\R_{\gamma v}\bm{u}_{h,v}\\
		\D_{b,k}\bm{u}_{h,k}\\
		\D_{b,v}\bm{u}_{h,v}
		\end{bmatrix}
		\\
		&\quad-\sum_{\gamma\subset\Gamma^{I}}\begin{bmatrix}
		\D_{\gamma k}\bm{u}_{h,k}\\
		\D_{\gamma v}\bm{u}_{h,v}
		\end{bmatrix}^{T}\begin{bmatrix}
		2\T_{\gamma k}^{(4)} & 2\T_{\gamma k}^{(4)}\\
		2\T_{\gamma k}^{(4)} & 2\T_{\gamma}^{(4)}
		\end{bmatrix}\begin{bmatrix}
		\D_{\gamma k}\bm{u}_{h,k}\\
		\D_{\gamma v}\bm{u}_{h,v}
		\end{bmatrix}
		\\&\quad -\sum_{\gamma\subset\Gamma^{D}}\begin{bmatrix}
		\R_{\gamma k}\bm{u}_{h,k}\\
		\D_{b,k}\bm{u}_{h,k}
		\end{bmatrix}^{T}\begin{bmatrix}
		2\T_{\gamma k}^{(D)} & -2\C_{\gamma k}\\
		-2\C_{\gamma k}^{T} & \alpha_{\gamma k}\V_{k}
		\end{bmatrix}
		\begin{bmatrix}
		\R_{\gamma k}\bm{u}_{h,k}\\
		\D_{b,k}\bm{u}_{h,k}
		\end{bmatrix},
	\end{aligned}
\end{equation}
where $ \C_{\gamma k} = n_{\gamma k}\Rgk\Lambda_{k} $, $ \C_{\gamma v} = n_{\gamma v}\Rgv\Lambda_{v} $, $ \sigma_{k}=\T_{\gamma k}^{(2)}+\T_{\gamma k}^{(3)}-1 $, $ \sigma_{v}=\T_{\gamma v}^{(2)}+\T_{\gamma v}^{(3)}-1 $, $ \alpha_{\gamma k}$ is a positive interface weight factor satisfying the relation $ \sum_{\gamma \in \Gamma_k} \alpha_{\gamma k} = 1 $, and 
\begin{align}
	 \V_{k}=\D_{b,k}^{-T}(\M_{k}+\M^T_{k})\D_{b,k}^{-1}, \qquad \V_{v}=\D_{b,v}^{-T}(\M_{v}+\M^T_{v})\D_{b,v}^{-1}. 
\end{align} 
We note that $ \V_{k} $ is positive semidefinite since $ \bm{v}^T(\M_{k} + \M_{k}^T)\bm{v}\ge 0 $ for all $ \bm{v}\in \IR{n_p} $ implies
\begin{equation}
(\Dbk^{-1}\bm{v})^T(\M_{k} + \M_{k}^T)(\Dbk^{-1}\bm{v})\ge 0.
\end{equation}
Moreover, we have
\begin{equation} \label{eq:Dbk vc}
	 \Dbk \bm{v}_c = \bm{v}_0, \quad \text{or} \quad \Dbk^{-1} \bm{v}_0 = \bm{v}_c,
\end{equation}
where $ \bm{v}_0 $ represents vectors containing zero at the entries corresponding to the rows for which $ \Dbk $ contains consistent approximations of the first derivative and the values of $ \bm{v}_c $ at all other entries.

For diagonal-norm narrow-stencil SBP operators that are constructed as in \cite{mattsson2004summation,mattsson2008discontinuous,mattsson2012summation,del2015SecondDerivative,mattsson2013solution}, we can determine the vectors in the nullspace of $ \V_{k} $ using \cref{assu:nullspace,assu:Dbk}.
\begin{lemma}\label{lem:nullspace}
	Consider a consistent diagonal-norm narrow-stencil second-derivative SBP operator of the form \cref{eq:D2 decomposition 1} for which $ \M_{k} = \M_{k}^T $, the $ \E_k $ matrix is constructed such that it has nonzero rows only at row indices where the $ \Dbk $ matrix contains consistent approximations of the first derivative, the $ \DDk $ matrix has larger dense blocks at the top left and bottom right corners than the $ \E_k $ matrix, and \cref{assu:Dbk,assu:nullspace} hold. Then, we have $ \fnc{N}(\M_k) = \bm{v}_c$ and $ \fnc{N}(\V_{k}) = \fnc{N}(\V_{v})= \bm{v}_0 $.
\end{lemma}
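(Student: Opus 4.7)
My plan is to first establish $\fnc{N}(\M_k)=\spn\{\bm{1}\}$ and then transfer the characterization to $\V_k$ and $\V_v$ through the invertibility of $\Dbk$ from \cref{assu:Dbk}.

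For the forward inclusion $\spn\{\bm{1}\}\subseteq\fnc{N}(\M_k)$, I would apply \cref{eq:D2 decomposition 1} to $\bm{1}$. Nullspace consistency (\cref{assu:nullspace}) yields $\DDk\bm{1}=\bm{0}$, which rearranges to $\M_k\bm{1}=\E_k\Lambda_k\Dbk\bm{1}$; the identity $\Dbk\bm{1}=\bm{v}_0$ from \cref{eq:Dbk vc} turns this into $\M_k\bm{1}=\E_k\Lambda_k\bm{v}_0$. The structural hypothesis that the nonzero rows of $\E_k$ sit only at the derivative rows of $\Dbk$, combined with the symmetry $\E_k=\E_k^T$ coming from \cref{eq:Ek}, means that the nonzero columns of $\E_k$ occupy those same indices; but $\bm{v}_0$ vanishes at precisely those indices by its definition, so $\E_k\Lambda_k\bm{v}_0=\bm{0}$ and hence $\M_k\bm{1}=\bm{0}$.

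For the reverse inclusion $\fnc{N}(\M_k)\subseteq\spn\{\bm{1}\}$, I would use that $\M_k=\sum_i\lambda_k(x_i)\bar\M_i$ is a positive combination of SPSD summands, hence itself SPSD. If $\M_k\bm{w}=\bm{0}$, then \cref{eq:D2 decomposition 1} gives $\H_k\DDk\bm{w}=\E_k\Lambda_k\Dbk\bm{w}$, whose right-hand side is supported only on rows where $\E_k$ is nonzero. The ``larger dense blocks'' hypothesis then provides transition rows on which $\DDk$ is nontrivially supported while $\E_k$ vanishes, forcing $(\DDk\bm{w})_i=0$ at every index outside the support of $\E_k$. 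Combining these interior and transition constraints with nullspace consistency of $\DDk$ (whose nullspace is $\spn\{\bm{1},\bm{x}_k\}$) and with the orthogonality condition $\bm{w}^T\bar\M_i\bm{w}=0$ extracted from the PSD decomposition, one rules out the $\bm{x}_k$ component and is left with $\bm{w}\in\spn\{\bm{1}\}$.

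Finally, since $\M_k=\M_k^T$, I can write $\V_k=2\Dbk^{-T}\M_k\Dbk^{-1}$, so by \cref{assu:Dbk} the equation $\V_k\bm{z}=\bm{0}$ is equivalent to $\M_k(\Dbk^{-1}\bm{z})=\bm{0}$, which by the first part is equivalent to $\Dbk^{-1}\bm{z}\in\spn\{\bm{1}\}$, i.e., $\bm{z}\in\spn\{\Dbk\bm{1}\}=\spn\{\bm{v}_0\}$ by \cref{eq:Dbk vc}; the identical argument on $\Omega_v$ yields $\fnc{N}(\V_v)=\spn\{\bm{v}_0\}$. The main obstacle I foresee is the reverse inclusion in the $\M_k$ step: turning the row-support statement ``$\DDk\bm{w}=\bm{0}$ off the support of $\E_k$'' into the nullspace statement ``$\bm{w}\in\spn\{\bm{1}\}$'' requires careful use of the narrow-stencil boundary closures together with the joint PSD structure $\M_k=\sum_i\lambda_k(x_i)\bar\M_i$ to eliminate the $\bm{x}_k$ direction, whereas the forward inclusion and the final transfer to $\V_k,\V_v$ are essentially algebraic.
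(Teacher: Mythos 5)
Your overall skeleton matches the paper's: the forward inclusion $\spn\{\bm{1}\}\subseteq\fnc{N}(\M_k)$ via nullspace consistency of $\DDk$ together with the support structure of $\E_k$ and $\Dbk\bm{1}=\bm{v}_0$; the reverse inclusion via the support mismatch between the dense corner blocks of $\DDk$ and those of $\E_k$; and the transfer to $\V_k,\V_v$ through the invertibility of $\Dbk$. Indeed your final step is slightly more complete than the paper's, which only verifies $\V_k\bm{v}_0=\bm{0}$ and asserts uniqueness, whereas you make the bijection $\fnc{N}(\V_k)=\Dbk\,\fnc{N}(\M_k)$ explicit.

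There is, however, a genuine gap exactly where you flag one: you never eliminate the $\bm{x}_k$ direction from $\fnc{N}(\M_k)$, and the route you gesture at --- extracting $\bm{w}^T\bar{\M}_i\bm{w}=0$ from the PSD decomposition --- is not the mechanism that works, since the hypotheses say nothing about the individual $\bar{\M}_i$ that would exclude $\bm{x}_k$ from their common nullspace. The paper closes this with a short explicit computation rather than anything involving the $\bar{\M}_i$: since $\H_k\DDk\bm{x}_k=\bm{0}$ by \cref{assu:nullspace}, assuming $\M_k\bm{x}_k=\bm{0}$ would force
\[
\E_k\Lambda_k\Dbk\bm{x}_k=\R_{r k}^{T}\D_{r k}\bm{x}_k+\R_{\ell k}^{T}\D_{\ell k}\bm{x}_k=\R_{r k}^{T}\lambda_{r}-\R_{\ell k}^{T}\lambda_{\ell}=\bm{0},
\]
which is impossible because $\R_{r k}$ and $\R_{\ell k}$ have no nonzero entries in common and $\lambda>0$. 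Hence $\M_k\bm{x}_k\neq\bm{0}$, so within $\spn\{\bm{1},\bm{x}_k\}$ only multiples of $\bm{1}$ can lie in $\fnc{N}(\M_k)$; your transition-row argument (which is the paper's) then disposes of vectors outside $\spn\{\bm{1},\bm{x}_k\}$, for which $\H_k\DDk\bm{v}=\E_k\Lambda_k\Dbk\bm{v}$ would have to vanish on the extra boundary rows where $\E_k$ is zero, contradicting nullspace consistency. Without the displayed computation (or an equivalent), the argument does not establish $\fnc{N}(\M_k)=\bm{v}_c$.
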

\begin{proof}
The nullspace consistency of the $ \DDk $ in \cref{assu:nullspace} implies the following:
\begin{align} 
		\Hk \DDk \bm{v}_c &= \left(-\M_k + \Ek \Lambda_{k}\Dbk \right)\bm{v}_c = -\M_k\bm{v}_c + \Ek \Lambda_{k}\Dbk \bm{v}_c = \bm{0}, \label{eq:nullspace of M 1}
		\\
		\Hk \DDk \bm{x}_k &= \left(-\M_k + \Ek \Lambda_{k}\Dbk \right)\bm{x}_k = -\M_k\bm{x}_k + \Ek \Lambda_{k}\Dbk \bm{x}_k = \bm{0}.
		\label{eq:nullspace of M 2}
\end{align}
The second term in the last equality in \cref{eq:nullspace of M 1} is zero due to the structure of the $ \Ek $ matrix, \ie, $ \Ek \Lambda_{k}\Dbk \bm{v}_c = \bm{0}$; thus, $ \M_k \bm{v}_c = \bm{0} $. Furthermore, $ \M_k\bm{x}_k\ne \bm{0} $ in \cref{eq:nullspace of M 2} since otherwise we would obtain 
\begin{equation}\label{eq:Lem1 1}
	\Hk \DDk \bm{x}_k = \Ek \Lambda_{k}\Dbk \bm{x}_k = \R_{r k}^T\D_{r k}\bm{x}_k + \R_{\ell k}^T \D_{\ell k}\bm{x}_k = \R_{r k}^T\lambda_{r} - \R_{\ell k}^T \lambda_{\ell}= \bm{0},
\end{equation} 
which is not possible as $ \R_{r k} $ and $ \R_{\ell k} $ do not have nonzero values at the same entries, and $ \lambda > 0 $. We have used the accuracy of $ \D_{r k} $ and $ \D_{\ell k} $ in the penultimate equality in \cref{eq:Lem1 1}, \ie, $ \D_{r k}\bm{x}_k = \lambda_{r} $ and $ \D_{\ell k}\bm{x}_k = -\lambda_{\ell} $, where $ \lambda_{\ell} $ and $ \lambda_{r} $ are at least order $ h^{p+1} $ approximations of $ \lambda $ at the left and right boundaries of $ \Omega_{k} $, respectively.
Hence, there is no vector spanned by $ \{\bm{1},\bm{x}_k\} $ other than $ \bm{v}_c $ that is in the nullspace of $ \M_k $. If there exists a nontrivial vector $ \bm{v} $ such that $\bm{v} \notin {\rm span}\{\bm{1}, \bm{x}_k\} $ and $  \M_k \bm{v} =  \bm{0} $, then
\begin{equation} \label{eq:Lem1 2}
	\H_k\DDk \bm{v}=\Ek \Lambda_{k}\Dbk \bm{v} \ne \bm{0},
\end{equation} 
because $ \DDk $ is nullspace consistent and $ \H_k $ is SPD. The vector $\Ek \Lambda_{k}\Dbk \bm{v} $ has zero entries at rows corresponding to the zero rows of the $ \Ek $ matrix. By construction, $ \DDk $ has larger dense blocks at the top left and bottom right corners (consisting of more rows and columns) than the $ \Ek $ matrix; therefore, it follows from the nullspace consistency of the $ \DDk $ matrix that $ [\H_k\DDk \bm{v}]_i \ne 0$ and $ [\Ek \Lambda_{k}\Dbk \bm{v}]_i = 0 $, at least for one entry, the $ i $-th entry, near the boundaries. This implies that the equality in \cref{eq:Lem1 2} cannot hold for any vector $\bm{v}\notin {\rm span}\{\bm{1},\bm{x}_k\} $; hence, we have $ \fnc{N}(\M_k)=\bm{v}_c $. Since $  \M_{k}=  \M_{k}^T $, it follows that $ \fnc{N}(\M_{k}^{T})=\bm{v}_c $. Using the result in \cref{eq:Dbk vc} with the fact that $ \fnc{N}(\M_k) = \fnc{N}(\M_{k}^T)= \bm{v}_c $, we obtain
\begin{equation} \label{eq:Vk nullspace}
	\V_{k}\bm{v}_0 = \Dbk^{-T}(\M_k + \M_{k}^T) \Dbk^{-1}\bm{v}_0 = \Dbk^{-1}(\M_k + \M_{k}^T) \bm{v}_c = \bm{0}.
\end{equation}
Thus, $ \bm{v}_0 $ is the only nontrivial vector in the nullspace of $ \V_{k} $. Analogous results hold for $ \V_{v} $.
\qed
\end{proof}

In \cite{eriksson2018finite}, the stability conditions that the SATs must satisfy were derived for diagonal-norm narrow-stencil SBP operators assuming that $ \M_{k}$ is SPD; however, most operators in the literature, \eg, \cite{mattsson2004summation,mattsson2008discontinuous,mattsson2012summation,del2015SecondDerivative,mattsson2013solution}, do not satisfy this requirement. For dense-norm narrow-stencil second-derivative SBP operators, we make the following assumption regarding the nullspaces of $ \M_k $ and $ \M_k^T $:
\begin{assumption}\label{ass:nullspace of M_k}
	For dense-norm narrow-stencil SBP operators, $ \bm{v}_c $ is the only nontrivial vector in the nullspaces of $ \M_k $ and $ \M_k^T $, \ie, $ \fnc{N}(\M_k) = \fnc{N}(\M_k^T) = \bm{v}_c $.  
\end{assumption}
Under \cref{ass:nullspace of M_k}, \cref{eq:Vk nullspace} gives $ \fnc{N}(\V_{k}) = \fnc{N}(\V_{v})= \bm{v}_0 $ for dense-norm narrow-stencil SBP operators. Before proceeding with the energy analysis of the SBP-SAT discretization, we state an essential theorem, which is proved in \cite{albert1969conditions,gallier2010schur}.
\begin{theorem} \label{thm:Positive semi-definiteness}
	A symmetric matrix of the form 
	$\Y =\bigl[\begin{smallmatrix}
	\Y_{11} & \Y_{12} \\ \Y_{12}^T & \Y_{22}
	\end{smallmatrix}\bigr]$ is positive semidefinite if and only if
		\begin{equation}  \label{eq:positive semidefiniteness}
		\Y_{22}\succeq 0, \quad  (\I-\Y_{22}\Y_{22}^{+})\Y_{12}^T  = \bm{0}, \quad \text{and} \quad \Y_{11} - \Y_{12}\Y_{22}^{+}\Y_{12}^T \succeq 0,
		\end{equation}
	where $ \Y^{+}$ denotes the Moore-Penrose pseudoinverse of $ \Y $ and $ \Y \succeq 0 $ indicates that $ \Y $ is positive semidefinite.
\end{theorem}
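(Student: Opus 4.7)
The plan is to establish the equivalence by means of a generalized block factorization that reduces to the standard Schur complement identity when $\Y_{22}$ is invertible, and handles the singular case via the Moore--Penrose pseudoinverse. The key observation underlying the whole argument is that, since $\Y_{22}$ is symmetric, $\Y_{22}\Y_{22}^{+}$ is the orthogonal projector onto $\range(\Y_{22})$, so the middle condition $(\I-\Y_{22}\Y_{22}^{+})\Y_{12}^T = \bm{0}$ is equivalent to the inclusion $\nullsp(\Y_{22}) \subseteq \nullsp(\Y_{12})$, \ie, every column of $\Y_{12}^T$ lies in $\range(\Y_{22})$.

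For the forward direction, I would assume $\Y \succeq 0$ and extract the three conditions by choosing structured test vectors. Feeding in $\bigl[\begin{smallmatrix} \bm{0} \\ \bm{v}\end{smallmatrix}\bigr]$ immediately yields $\Y_{22}\succeq 0$. Next, for $\bm{v}\in\nullsp(\Y_{22})$ and an arbitrary $\bm{u}$, the quadratic form evaluated on $\bigl[\begin{smallmatrix} t\bm{u} \\ \bm{v}\end{smallmatrix}\bigr]$ simplifies to $t^{2}\bm{u}^{T}\Y_{11}\bm{u} + 2t\,\bm{u}^{T}\Y_{12}\bm{v}$; nonnegativity for all $t\in\mathbb{R}$ forces $\Y_{12}\bm{v}=\bm{0}$, which is precisely the range condition. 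Finally, plugging $\bigl[\begin{smallmatrix} \bm{u} \\ -\Y_{22}^{+}\Y_{12}^{T}\bm{u}\end{smallmatrix}\bigr]$ into the quadratic form and using the just-established range property to cancel cross terms leaves exactly $\bm{u}^{T}(\Y_{11}-\Y_{12}\Y_{22}^{+}\Y_{12}^{T})\bm{u}\ge 0$, delivering the Schur complement condition.

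For the reverse direction, I would verify directly the generalized factorization
\begin{equation*}
\Y=\begin{bmatrix} \I & \Y_{12}\Y_{22}^{+} \\ \bm{0} & \I \end{bmatrix}\begin{bmatrix} \Y_{11}-\Y_{12}\Y_{22}^{+}\Y_{12}^{T} & \bm{0} \\ \bm{0} & \Y_{22} \end{bmatrix}\begin{bmatrix} \I & \bm{0} \\ \Y_{22}^{+}\Y_{12}^{T} & \I \end{bmatrix}.
\end{equation*}
Expanding the right-hand side yields the $(1,2)$-block $\Y_{12}\Y_{22}^{+}\Y_{22}$ and the $(2,1)$-block $\Y_{22}\Y_{22}^{+}\Y_{12}^{T}$; the range condition, together with symmetry of $\Y_{22}$ (which gives $\Y_{22}\Y_{22}^{+}=\Y_{22}^{+}\Y_{22}$), collapses both of these to $\Y_{12}$ and $\Y_{12}^{T}$, respectively, so the identity holds exactly. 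The middle factor is then block-diagonal with both blocks positive semidefinite by the hypotheses, and the outer factors are mutual transposes, so $\Y$ is congruent to a PSD matrix and is therefore itself PSD.

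The main obstacle is justifying the pseudoinverse manipulations in the singular case, \ie, ensuring that the cancellation $\Y_{22}\Y_{22}^{+}\Y_{12}^{T}=\Y_{12}^{T}$ used in both the congruence factorization and the simplification of the quadratic form is exact, rather than merely holding on a subspace. Once this is handled via the interpretation of $\Y_{22}\Y_{22}^{+}$ as an orthogonal projector, the remaining steps are routine block-matrix algebra.
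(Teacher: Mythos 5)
Your proof is correct. Note, however, that the paper does not prove \cref{thm:Positive semi-definiteness} at all --- it states the result and defers the proof to the cited references (Albert 1969; Gallier's notes on Schur complements), and the argument you give is essentially the standard one found there: structured test vectors $\bigl[\begin{smallmatrix}\bm{0}\\ \bm{v}\end{smallmatrix}\bigr]$, $\bigl[\begin{smallmatrix}t\bm{u}\\ \bm{v}\end{smallmatrix}\bigr]$, $\bigl[\begin{smallmatrix}\bm{u}\\ -\Y_{22}^{+}\Y_{12}^{T}\bm{u}\end{smallmatrix}\bigr]$ for necessity, and the generalized congruence factorization through the pseudoinverse Schur complement for sufficiency, with the identity $\Y_{22}\Y_{22}^{+}\Y_{12}^{T}=\Y_{12}^{T}$ justified by viewing $\Y_{22}\Y_{22}^{+}$ as the orthogonal projector onto $\range(\Y_{22})$. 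One cosmetic remark: in the third test-vector computation the cancellation follows from the Moore--Penrose identity $\Y_{22}^{+}\Y_{22}\Y_{22}^{+}=\Y_{22}^{+}$ rather than from the range condition, but this does not affect the validity of the argument.
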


An SBP-SAT discretization is energy stable if 
\begin{equation}
\dv{}{t}\left(\norm{\bm{u}_h}^2_\H\right) =\bm{u}_h^T \H\dv{\bm{u}_h}{t}+\dv{\bm{u}_h^T}{t}\H\bm{u}_h = 2 R_{h}\left(\bm{u}_{h},\bm{u}_{h}\right) \le 0.
\end{equation} 
The sum of the residual and its transpose for conservative schemes, $ 2 R_{h}(\bm{u}_{h},\bm{u}_{h})$, given in \cref{eq:residual energy 1} satisfies the energy stability condition if 
\begin{equation}
\begin{aligned} \label{eq:matrices for stability}
	&\A\coloneqq\begin{bmatrix}
	\A_{11} & \A_{12} \\
	\A_{21} & \A_{22}
	\end{bmatrix} =\begin{bmatrix}
	2\T_{\gamma k}^{(1)} & -2\T_{\gamma k}^{(1)} & \sigma_{k}\C_{\gamma k} & -\sigma_{v}\C_{\gamma v}\\
	-2\T_{\gamma k}^{(1)} & 2\T_{\gamma v}^{(1)} & -\sigma_{k}\C_{\gamma k} & \sigma_{v}\C_{\gamma v}\\
	\sigma_{k}\C_{\gamma k}^{T} & -\sigma_{k}\C_{\gamma k}^{T} & \alpha_{\gamma k}\V_{k} & \bm{0}\\
	-\sigma_{v}\C_{\gamma v}^{T} & \sigma_{v}\C_{\gamma v}^{T} & \bm{0} & \alpha_{\gamma v}\V_{v}
	\end{bmatrix}, \quad
	\begin{bmatrix}
	2\T_{\gamma k}^{(4)} & 2\T_{\gamma k}^{(4)}\\
	2\T_{\gamma k}^{(4)} & 2\T_{\gamma}^{(4)}
	\end{bmatrix},
	\quad
	\begin{bmatrix}
	2\T_{\gamma k}^{(D)} & -2\C_{\gamma k}\\
	-2\C_{\gamma k}^{T} & \alpha_{\gamma k}\V_{k}
	\end{bmatrix},
\end{aligned}
\end{equation}
are positive semidefinite. We partition the matrix $ \A \in \IRtwo{(2+2n_p)}{(2+2n_p)} $ using four blocks, namely $ \A_{11}\in \IRtwo{2}{2} $, $ \A_{12} \in \IRtwo{2}{2n_p} $, $ \A_{21} \in \IRtwo{2n_p}{2} $, and $ \A_{22} \in \IRtwo{2n_p}{2n_p} $. 

For adjoint consistent SATs, we enforce all the conditions in \cref{eq:adjoint consistency conditions}. Furthermore, to obtain a symmetric $ \A $ matrix we require that $ \T_{\gamma k}^{(3)}-\T_{\gamma k}^{(2)}=1 $, as in \cite{yan2018interior}. This condition is satisfied by the SATs corresponding to some of the popular discontinuous Galerkin fluxes for elliptic PDEs \cite{worku2020simultaneous}, \eg, the modified method of Bassi and Rebay (BR2) \cite{bassi1997highbr2}, local discontinuous Galerkin (LDG) \cite{shu2001different}, and compact discontinuous Galerkin (CDG) \cite{peraire2008compact} methods. With the adjoint consistency and symmetry conditions in place, the components of the $ \A $ matrix become
\begin{equation} \label{eq:components of A}
	\begin{aligned}
		\A_{11}&=\begin{bmatrix}
			2\T_{\gamma k}^{(1)} & -2\T_{\gamma k}^{(1)} \\
			-2\T_{\gamma k}^{(1)} & 2\T_{\gamma v}^{(1)}
		\end{bmatrix},
		&&
		\A_{12} =\begin{bmatrix}
			 2\T_{\gamma k}^{(2)}\C_{\gamma k} & -2\T_{\gamma v}^{(2)}\C_{\gamma v} \\
			 -2\T_{\gamma k}^{(2)}\C_{\gamma k} & 2\T_{\gamma v}^{(2)}\C_{\gamma v}
		\end{bmatrix},
		\\
		\A_{21} &= \begin{bmatrix}
			2\T_{\gamma k}^{(2)}\C_{\gamma k}^{T} & -2\T_{\gamma k}^{(2)}\C_{\gamma k}^{T}\\
			-2\T_{\gamma v}^{(2)}\C_{\gamma v}^{T} & 2\T_{\gamma v}^{(2)}\C_{\gamma v}^{T}
		\end{bmatrix},
		&&
		\A_{22} = \begin{bmatrix}
			\alpha_{\gamma k}\V_{k} & \bm{0}\\
			\bm{0} & \alpha_{\gamma v}\V_{v}
		\end{bmatrix}.
	\end{aligned}
\end{equation}

\begin{theorem} \label{thm:stability}
	Let \cref{assu:Dbk,assu:nullspace,ass:nullspace of M_k} hold, then an adjoint consistent SBP-SAT discretization of the diffusion problem, \cref{eq:diffusion problem}, that produces a symmetric $ \A $ matrix, \ie, $ \T_{\gamma k}^{(3)}-\T_{\gamma k}^{(2)}=1 $, and uses a narrow-stencil second-derivative operator of the form \cref{eq:D2 decomposition 1} for the spatial discretization is energy stable if 
	\begin{align}
		\T_{\gamma k}^{(1)}&\ge \frac{2}{\alpha_{\gamma k}}\T_{\gamma k}^{(2)}\R_{\gamma k}\Lambda_{k}\V_{k}^{+}\Lambda_{k}\R_{\gamma k}^{T} \T_{\gamma k}^{(2)}+\frac{2}{\alpha_{\gamma v}}\T_{\gamma v}^{(2)}\R_{\gamma v}\Lambda_{v}\V_{v}^{+}\Lambda_{v}\R_{\gamma v}^{T}\T_{\gamma v}^{(2)},
		\label{eq:1 stability thm}
		\\
		\T_{\gamma k}^{(D)}&\ge \frac{2}{\alpha_{\gamma k}}\R_{\gamma k}\Lambda_{k}\V_{k}^{+}\Lambda_{k}\R_{\gamma k}^{T},
		\label{eq:2 stability thm}
		\\
		\T_{\gamma k}^{(4)}& \ge 0
		\label{eq:3 stability thm}.
	\end{align}
\end{theorem}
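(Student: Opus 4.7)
The plan is to start from the already-derived expression (\ref{eq:residual energy 1}) for $2R_h(\bm{u}_h,\bm{u}_h)$, which, under the conservation conditions (automatically enforced by adjoint consistency), decomposes the energy rate into a sum of three quadratic forms over the interior interfaces, the Neumann boundary interfaces, and the Dirichlet boundary interfaces respectively. Energy stability is then equivalent to showing that each of the three matrices listed in (\ref{eq:matrices for stability}) is positive semidefinite. For each matrix I would invoke \cref{thm:Positive semi-definiteness}, so the proof reduces to verifying the three Schur-complement-type conditions in (\ref{eq:positive semidefiniteness}).

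The Neumann block is the simplest: under adjoint consistency $\T_{\gamma k}^{(4)}=\T_{\gamma v}^{(4)}$, so the corresponding $2\times 2$ matrix is $2\T_{\gamma k}^{(4)}\bm{1}\bm{1}^T$, which is positive semidefinite exactly when $\T_{\gamma k}^{(4)}\ge 0$, yielding (\ref{eq:3 stability thm}). For the Dirichlet block, set $\Y_{22}=\alpha_{\gamma k}\V_{k}$, $\Y_{12}=-2\C_{\gamma k}=-2n_{\gamma k}\R_{\gamma k}\Lambda_{k}$, and $\Y_{11}=2\T_{\gamma k}^{(D)}$. The first condition $\Y_{22}\succeq 0$ follows from the positive semidefiniteness of $\V_{k}$ noted after (\ref{eq:residual energy 1}). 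The range condition $(\I-\V_{k}\V_{k}^{+})\Lambda_{k}\R_{\gamma k}^{T}=\bm{0}$ requires $\Lambda_{k}\R_{\gamma k}^{T}\in\range(\V_{k})=\nullsp(\V_{k})^{\perp}={\rm span}\{\bm{v}_{0}\}^{\perp}$, which I would establish by combining \cref{lem:nullspace} (under \cref{assu:Dbk,assu:nullspace} for diagonal-norm operators, or \cref{ass:nullspace of M_k} otherwise) with the observation that $\bm{v}_{0}$ is zero at exactly those rows where $\R_{\gamma k}^{T}$ is supported (the boundary rows where $\Dbk$ carries the derivative stencil, which is where $\Ek$ and hence $\Rgk$ live by the structural hypothesis in \cref{lem:nullspace}). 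The Schur complement $\Y_{11}-\Y_{12}\Y_{22}^{+}\Y_{12}^{T}\succeq 0$ then reduces directly to (\ref{eq:2 stability thm}).

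The main work, and the step I expect to be the main obstacle, is the interior interface matrix $\A$. With $\A_{22}={\rm blockdiag}(\alpha_{\gamma k}\V_{k},\alpha_{\gamma v}\V_{v})$, its Moore-Penrose pseudoinverse is block diagonal; the range condition $(\I-\A_{22}\A_{22}^{+})\A_{12}^{T}=\bm{0}$ then splits into the two independent conditions $(\I-\V_{k}\V_{k}^{+})\Lambda_{k}\R_{\gamma k}^{T}=\bm{0}$ and $(\I-\V_{v}\V_{v}^{+})\Lambda_{v}\R_{\gamma v}^{T}=\bm{0}$, handled exactly as in the Dirichlet case. The delicate part is forming the Schur complement $\A_{11}-\A_{12}\A_{22}^{+}\A_{21}$: using the explicit form (\ref{eq:components of A}), I would compute
\[
\A_{12}\A_{22}^{+}\A_{21}=\frac{4}{\alpha_{\gamma k}}\left(\T_{\gamma k}^{(2)}\right)^{2}\R_{\gamma k}\Lambda_{k}\V_{k}^{+}\Lambda_{k}\R_{\gamma k}^{T}\,\bm{J}+\frac{4}{\alpha_{\gamma v}}\left(\T_{\gamma v}^{(2)}\right)^{2}\R_{\gamma v}\Lambda_{v}\V_{v}^{+}\Lambda_{v}\R_{\gamma v}^{T}\,\bm{J},
\]
where $\bm{J}=\left[\begin{smallmatrix}1 & -1\\-1 & 1\end{smallmatrix}\right]$, and observe that $\A_{11}=2\T_{\gamma k}^{(1)}\bm{J}$ under the adjoint-consistency equality $\T_{\gamma k}^{(1)}=\T_{\gamma v}^{(1)}$. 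Since $\bm{J}$ is a rank-one positive semidefinite matrix, positive semidefiniteness of $\A_{11}-\A_{12}\A_{22}^{+}\A_{21}$ reduces to nonnegativity of its scalar coefficient, which is precisely inequality (\ref{eq:1 stability thm}). Assembling the three PSD conclusions gives $\dv{}{t}\|\bm{u}_{h}\|_{\H}^{2}\le 0$, completing the proof.
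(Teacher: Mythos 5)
Your proposal is correct and follows essentially the same route as the paper: decompose $2R_h(\bm{u}_h,\bm{u}_h)$ via \cref{eq:residual energy 1}, reduce energy stability to positive semidefiniteness of the three matrices in \cref{eq:matrices for stability}, verify the range condition of \cref{thm:Positive semi-definiteness} from the nullspace structure of $\V_k$ (your orthogonality argument $\Lambda_k\R_{\gamma k}^T\perp\bm{v}_0$ via disjoint supports is equivalent to the paper's explicit SVD computation of $\I-\V_k\V_k^{+}$), and extract \cref{eq:1 stability thm,eq:2 stability thm,eq:3 stability thm} from the Schur complements. The only slip is cosmetic: the second matrix in \cref{eq:matrices for stability} arises from the $\T_{\gamma k}^{(4)}$ coupling on \emph{interior} interfaces, not from the Neumann boundary, though this does not affect your (correct) treatment of it.
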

\begin{proof}
 We wish to show that the matrices in \cref{eq:matrices for stability} are positive semidefinite. The matrix $ \A $, whose components are given in \cref{eq:components of A}, is symmetric; thus, we can use \cref{thm:Positive semi-definiteness} to determine the conditions required for it to be positive semidefinite. We have $ \A_{22} \succeq 0$ because $ \V_{k} $ and $ \V_{v} $ are positive semidefinite, $ \alpha_{\gamma k}>0$, and $\alpha_{\gamma v} >0 $. Therefore, the first condition in \cref{thm:Positive semi-definiteness} is satisfied. The second condition in \cref{thm:Positive semi-definiteness} requires that 
 \begin{equation} \label{eq:stability proof 1}
 	\left(\I-\A_{22}\A_{22}^{+}\right)\A_{21}= 2 
 	\begin{bmatrix}
 	\I-\V_{k}\V_{k}^{+} & \bm{0}\\
 	\bm{0} & \I-\V_{v}\V_{v}^{+}
 	\end{bmatrix}
 	\begin{bmatrix}
 	\C_{\gamma k}^{T}\T_{\gamma k}^{(2)} & -\C_{\gamma k}^{T}\T_{\gamma k}^{(2)}\\
 	-\C_{\gamma v}^{T}\T_{\gamma v}^{(2)} & \C_{\gamma v}^{T}\T_{\gamma v}^{(2)}
 	\end{bmatrix}=0.
 \end{equation}
 To show that \cref{eq:stability proof 1} holds, we consider the singular value decomposition of $ \V_{k} $,
 \begin{equation}\label{eq:svd V}
 	\V_{k}=\X\Sigma\Y^{T},
 \end{equation}
 where the columns of $ \X $ and $ \Y $ contain orthonormal basis vectors of the column and row spaces, respectively, and $ \Sigma $ is a diagonal matrix containing the singular values of $ \V_{k} $ along its diagonal. \cref{lem:nullspace} and \cref{ass:nullspace of M_k} ensure that the matrix $ \V_{k} \in \IRtwo{n_p}{n_p} $ has only one nontrivial vector in its nullspace; hence, the first $ n_p - 1 $ columns of $ \X $ contain orthonormal basis vectors that span the column space of $ \V_{k} $ and the last column contains the vector in the nullspace of $ \V_{k} $, which is $ \bm{v}_0 $. We also note that 
 \begin{equation} \label{eq:stability proof 2}
 	\V_{k}\V_{k}^{+}=\X\Sigma\Y^{T}\Y\Sigma^{+}\X^{T}=\X\Sigma\Sigma^{+}\X^{T}=\X\I_m\X^{T},
 \end{equation} 
 where we have used the orthonormality of $ \Y $ in the second equality, \ie, $ \Y^{T}\Y=\I $, and $ \I_m $ denotes the identity matrix with the last diagonal entry set to zero (since $ \Sigma_{ii} = 0$ for $ i>m $, where $ m = n_p -1 $ is the rank of $ \V_{k} $). Therefore, for operators that include the boundary nodes, we have $ \I-\V_{k}\V_{k}^{+}=\I-\X\I_m\X^{T} $, which gives
 \begin{equation}\label{eq:stability proof 3}
	 	\I-\X\I_m\X^{T}=
	 	\I
	 	-\begin{bmatrix}
	 	\times & \dots &  & \times & 0\\
	 	\times & \dots  &  & \times & \times\\
	 	\vdots &  &  & \vdots & \vdots\\
	 	\times & \dots  &  & \times & \times\\
	 	\times & \dots &  & \times & 0
	 	\end{bmatrix}
	 	\begin{bmatrix}
	 	1\\
	 	& 1\\
	 	&  & \ddots\\
	 	&  &  & 1\\
	 	&  &  &  & 0
	 	\end{bmatrix}
	 	\begin{bmatrix}
	 	\times & \times & \dots & \times & \times\\
	 	\vdots & \vdots &  & \vdots& \vdots\\
	 	\\
	 	\times & \times & \dots & \times & \times\\
	 	0 & \times & \dots & \times & 0
	 	\end{bmatrix}
	 	=\begin{bmatrix}
	 	0 & 0 & \dots & 0 & 0\\
	 	0 & \times & \dots & \times & 0\\
	 	\vdots & \vdots &  & \vdots & \vdots\\
	 	0 & \times & \dots & \times & 0\\
	 	0 & 0 & \dots & 0 & 0
	 	\end{bmatrix},
\end{equation}
where $ \times $ denotes an entry that we do not need to specify for this analysis. Similarly, for operators that do not include boundary nodes,  we obtain
\begin{align}  \label{eq:stability proof 4}
	\I-\V_{k}\V_{k}^{+}=\I-\X\I_m\X^{T}=
	\begin{bmatrix}
	0\\
	& \ddots\\
	&  & 0\\
	&  &  & \times & \dots & \times\\
	&  &  & \vdots &  & \vdots\\
	&  &  & \times & \dots & \times\\
	&  &  &  &  &  & 0\\
	&  &  &  &  &  &  & \ddots\\
	&  &  &  &  &  &  &  & 0
	\end{bmatrix},
\end{align}
\ie, the first and last $ s $ rows are zero, where $ s $ is half of the number of zero entries in $ \bm{v}_0 $. For operators that have nodes at the boundaries, the LHS of \cref{eq:stability proof 1} can be evaluated using \cref{eq:stability proof 3} as
\begin{equation}  \label{eq:stability proof 5}
	\left(\I-\A_{22}\A_{22}^{+}\right)\A_{21}
	=\begin{bmatrix}
	\begin{array}{ccccc}
	0 & 0 & \dots & 0 & 0\\
	0 & \times & \dots & \times & 0\\
	\vdots & \vdots &  & \vdots & \vdots\\
	0 & \times & \dots & \times & 0\\
	0 & 0 & \dots & 0 & 0
	\end{array}\\
	& \begin{array}{ccccc}
	0 & 0 & \dots & 0 & 0\\
	0 & \times & \dots & \times & 0\\
	\vdots & \vdots &  & \vdots & \vdots\\
	0 & \times & \dots & \times & 0\\
	0 & 0 & \dots & 0 & 0
	\end{array}
	\end{bmatrix}
	\begin{bmatrix}
	\begin{array}{c}
	0\\
	\vdots\\
	\\
	0\\
	\T_{\gamma k}^{(2)}\lambda_{\gamma k}
	\end{array} & \begin{array}{c}
	0\\
	\vdots\\
	\\
	0\\
	-\T_{\gamma k}^{(2)}\lambda_{\gamma k}
	\end{array}\\
	\begin{array}{c}
	-\T_{\gamma v}^{(2)}\lambda_{\gamma v}\\
	0\\
	\vdots\\
	0\\
	0
	\end{array} & \begin{array}{c}
	\T_{\gamma v}^{(2)}\lambda_{\gamma v}\\
	0\\
	\vdots\\
	0\\
	0
	\end{array}
	\end{bmatrix}=0.
\end{equation}
Similarly, substituting \cref{eq:stability proof 4} into \cref{eq:stability proof 1}, it is straightforward to show that the condition $ \left(\I-\A_{22}\A_{22}^{+}\right)\A_{21} = 0$ also holds for operators that do not include boundary nodes. For $ \A $ to be positive semidefinite, it remains to find sufficient conditions to satisfy the last requirement in \cref{thm:stability}, \ie, $ \A_{11}-\A_{12}\A_{22}^{+}\A_{21} \succeq 0$, which, after some algebra, gives the condition
\begin{align}
		\begin{bmatrix}
		1 & -1\\
		-1 & 1
		\end{bmatrix}\otimes\left[\T_{1}-\left(\frac{2}{\alpha_{\gamma k}}\T_{\gamma k}^{(2)}\C_{\gamma k}\V_{k}^{+}\C_{\gamma k}^{T}\T_{\gamma k}^{(2)}+\frac{2}{\alpha_{\gamma v}}\T_{\gamma v}^{(2)}\C_{\gamma v}\V_{v}^{+}\C_{\gamma v}^{T}\T_{\gamma v}^{(2)}\right)\right] \succeq 0,	
		\label{eq:stability proof 6}	 
\end{align}
where $ \otimes $ denotes the Kronecker product. Since $ \bigl[\begin{smallmatrix}
1 & -1 \\ -1 & 1
\end{smallmatrix}\bigr] \succeq 0 $, the inequality in \cref{eq:stability proof 6} is satisfied if 
\begin{align}
	\T_{\gamma}^{(1)} &\ge \frac{2}{\alpha_{\gamma k}}\T_{\gamma k}^{(2)}\C_{\gamma k}\V_{k}^{+}\C_{\gamma k}^{T}\T_{\gamma k}^{(2)}+ \frac{2}{\alpha_{\gamma v}}\T_{\gamma v}^{(2)}\C_{\gamma v}\V_{v}^{+}\C_{\gamma v}^{T}\T_{\gamma v}^{(2)},
\end{align}
which is the same as the condition given by \cref{eq:1 stability thm}. Note that  $ \C_{\gamma k}\V_{k}^{+}\C_{\gamma k}^{T} = \R_{\gamma k}\Lambda_{k}\V_{k}^{+}\Lambda_{k}\R_{\gamma k}^{T} $ since $ n_{\gamma k}^2 = 1 $.

The second matrix in \cref{eq:matrices for stability} can be written as 
\begin{equation}
	\begin{bmatrix}
	2\T_{\gamma k}^{(4)} & 2\T_{\gamma k}^{(4)}\\
	2\T_{\gamma k}^{(4)} & 2\T_{\gamma k}^{(4)}
	\end{bmatrix} = 2\T_{\gamma k}^{(4)}\begin{bmatrix}
	1 & 1\\
	1 & 1
	\end{bmatrix},
\end{equation}
which is positive semidefinite provided $ \T_{\gamma k}^{(4)}=\T_{\gamma v}^{(4)} \ge 0$, since $ \bigl[\begin{smallmatrix}
1 & 1 \\ 1 & 1
\end{smallmatrix}\bigr] \succeq 0$ . Finally, we note that the last matrix in \cref{eq:matrices for stability} is symmetric, and $ \alpha_{\gamma k}\V_{k} \succeq 0 $. Furthermore, using the same approach used to obtain \cref{eq:stability proof 5} it can be shown that 
\begin{equation}
	(\I - \V_{k}\V_{k}^{+})(-2\C_{\gamma k}^T)=0
\end{equation}
irrespective of whether or not the operator includes boundary nodes. The last condition required for positive semidefiniteness of the last matrix in \cref{eq:matrices for stability} is
\begin{equation}
	2\T_{\gamma k}^{(D)} - 4\C_{\gamma k}\left(\alpha_{\gamma k}\V_{k}\right)^{+}\C_{\gamma k}^{T} \ge 0,
\end{equation}
which is satisfied if \cref{eq:2 stability thm} holds. Therefore, the conditions in \cref{thm:stability} are indeed sufficient for energy stability.
\qed
\end{proof}

\begin{remark}
	In practice, the pseudoinverse $ \V_{k}^{+} $ for a given SBP operator is computed once and for all on the reference element. It is then scaled by the inverse of the metric Jacobian when the operator is mapped to the physical elements. In Table 1 of \cite{eriksson2018dual}, equivalent values of $ 2\R_{\gamma k}\V_{k}^{+}\R_{\gamma k}^T $, denoted by $ q $ and scaled by the mesh spacing, are tabulated for the constant-coefficient diagonal-norm narrow-stencil SBP operators presented in \cite{mattsson2004summation}. The scaling used can be written as $ h= 1/[(n_e n_p - 1) - (n_e - 1)] $, and $ 2\V_k^{+} = (\Dbk^{-T}\M_k \Dbk^{-1})^{+} $ for operators with $ \M_{k} = \M_{k}^T $ is computed as $ \Dbk (\widetilde{\M}_k)^{-1} \Dbk^{T}$, where $ \widetilde{\M}_k$ is obtained by perturbing $ \M_k $ such that the corner values of $ \Dbk (\widetilde{\M}_k)^{-1} \Dbk^{T} $ are independent of the perturbation. The difference in the values of $ qh $ and $ 2h\R_{\gamma k}\V_{k}^{+}\R_{\gamma k}^T $ lies in the approaches pursued to evaluate $ 2\V_k^{+} $. 
\end{remark}

\begin{remark}
	For stability of discretizations with wide-stencil second-derivative operators, the terms $ \V_{k}^{+} $ and  $ \V_{v}^{+} $ in \cref{thm:stability} are replaced by $ (\H_k\Lambda_{k}+\Lambda_{k}\H_k)^{-1} $ and $  (\H_v\Lambda_{v}+\Lambda_{v}\H_v)^{-1}  $, respectively. 
\end{remark}

\subsection{Interface SATs}
In this section, we present a few concrete examples of SATs for diffusion problems. The type of SAT used in the discretization affects several numerical properties such as accuracy, stability, conditioning, symmetry, and sparsity. However, we do not analyze many of these properties; rather we limit our focus to aspects of solution and functional convergence. With this in mind, we introduce four SATs, of which two are stable and adjoint consistent while the other two are stable but adjoint inconsistent when implemented with narrow-stencil SBP operators. A more comprehensive analysis of SATs for diffusion problems is presented in \cite{worku2020simultaneous}, and additional types of SAT that are not studied in this work can be found therein.

\subsubsection{BR2 SAT: The modified method of Bassi and Rebay}
A stabilized version of the BR2 method \cite{bassi1997highbr2} for implementation with the narrow-stencil second-derivative SBP operators can be obtained by choosing
\begin{equation} 
\begin{aligned}
	\T_{\gamma k}^{(1)}&=\T_{\gamma v}^{(1)}=\frac{1}{2\alpha_{\gamma k}}\R_{\gamma k}\Lambda_{k}\V_{k}^{+}\Lambda_{k}\R_{\gamma k}^{T}+\frac{1}{2\alpha_{\gamma v}}\R_{\gamma v}\Lambda_{v}\V_{v}^{+}\Lambda_{v}\R_{\gamma v}^T,
	\\
	-\T_{\gamma k}^{(2)}&=-\T_{\gamma v}^{(2)}=\T_{\gamma k}^{(3)}=\T_{\gamma k}^{(3)} = \frac{1}{2},
	\\
	\T_{\gamma k}^{(D)} &=  \frac{2}{\alpha_{\gamma k}} \R_{\gamma k}\Lambda_{k} \V_k^{+}\Lambda_{k} \R_{\gamma k}^T,
	\\
	\T_{\gamma k}^{(4)} &= \T_{\gamma v}^{(4)} = 0.	
\end{aligned}
\end{equation}
The general form of the BR2 SAT was first proposed in \cite{yan2018interior} by discretizing the primal formulation of the DG method using SBP operators. It is straightforward to show that the BR2 SAT coefficients satisfy the adjoint consistency conditions in \cref{eq:adjoint consistency conditions} and the stability requirements in \cref{thm:stability}.

\subsubsection{LDG SAT: The local discontinuous Galerkin method}
We determine the SAT coefficients corresponding to the LDG method \cite{shu2001different} by discretizing the primal LDG formulation of the diffusion problem (see, \eg, \cite{arnold2002unified,peraire2008compact} for the primal LDG formulation). Similar analysis with the wide-stencil second-derivative SBP operators can be found in \cite{carpenter2010revisiting,gong2011interface,berg2012superconvergent,worku2020simultaneous}. Stable LDG SAT coefficients (with no mesh dependent parameter for stabilization) for discretizations with the narrow-stencil second-derivative SBP operators are given by
\begin{align}\label{eq:LDG}
		\T_{\gamma k}^{(1)}&=\T_{\gamma v}^{(1)}= \T_{\gamma k}^{(D)} = \frac{2}{\alpha_{\gamma k}}\R_{\gamma k}\Lambda_{k}\V_{k}^{+}\Lambda_{k}\R_{\gamma k}^{T},
		&&
		-\T_{\gamma k}^{(2)}=\T_{\gamma v}^{(3)}= 1,
		&&
		\T_{\gamma k}^{(3)}=\T_{\gamma v}^{(2)} = \T_{\gamma k}^{(4)} = \T_{\gamma v}^{(4)} = 0.
\end{align}
Clearly, the LDG SAT coefficients in \cref{eq:LDG} satisfy both the adjoint consistency conditions in \cref{eq:adjoint consistency conditions} and the stability demands in \cref{thm:stability}.

\begin{remark}
	The LDG SAT coefficients presented in \cref{eq:LDG} are obtained by using a switch function value of $ 1/2 $ and a global vector pointing to the positive $ x $-axis. We refer the reader to \cite{arnold2002unified,peraire2008compact,worku2020simultaneous} for details regarding the switch function and to \cite{sherwin20062d} for a discussion on the need to use a global vector.
\end{remark}

\begin{remark}
	In one space dimension the LDG and CDG fluxes are identical \cite{peraire2008compact}; therefore, the SAT coefficients presented in \cref{eq:LDG} define the CDG SAT as well.
\end{remark}

\subsubsection{BO SAT: The Baumann-Oden method}
The SAT coefficients corresponding to the BO method \cite{baumann1999discontinuous} do not satisfy the adjoint consistency conditions in \cref{eq:adjoint consistency conditions}; hence, the energy stability requirements in \cref{thm:stability} do not apply. The BO SAT coefficients for implementation with narrow-stencil second-derivative operators are given by
\begin{align}\label{eq:BO}
	\T_{\gamma k}^{(2)}&=\T_{\gamma v}^{(2)} = \T_{\gamma k}^{(3)} = \T_{\gamma v}^{(3)}= \frac{1}{2},
	&&
	\T_{\gamma k}^{(D)} = \frac{2}{\alpha_{\gamma k}}\R_{\gamma k}\Lambda_{k}\V_{k}^{+}\Lambda_{k}\R_{\gamma k}^{T},
	&&
	\T_{\gamma k}^{(1)}=\T_{\gamma v}^{(1)}=\T_{\gamma k}^{(4)} = \T_{\gamma v}^{(4)} = 0.
\end{align}
Except for the conditions on $ \T_{\gamma k}^{(D)} $, the stability conditions for the narrow- and wide-stencil SBP operators are the same when the BO SAT is used. Stability analysis for discretizations with wide-stencil SBP operators and the BO SAT can be found in \cite{carpenter2010revisiting,gong2011interface,worku2020simultaneous}. Note that for the BO SAT coefficients in \cref{eq:BO}, we have $ \T_{\gamma k}^{(1)}=\T_{\gamma v}^{(1)}= \sigma_{k} = \sigma_{v} = 0 $ in the matrix $ \A $ given in \cref{eq:matrices for stability}, and thus $ \A $ is positive semidefinite. The stability analyses for the second and third matrices in \cref{eq:matrices for stability} remain the same as those presented in the proof of \cref{thm:stability}. The BO SAT satisfies the conditions for conservation \cref{eq:conservation conditions}; hence, it leads to a conservative and stable but not adjoint consistent scheme.  

\subsubsection{CNG SAT: The Carpenter-Nordstr{\"o}m-Gottlieb method}
A version of the CNG SAT \cite{carpenter1999stable} that leads to a stable discretization when implemented with narrow-stencil second-derivative SBP operators has the SAT coefficients
\begin{equation}\label{eq:CNG}
	\begin{aligned} 
		\T_{\gamma k}^{(1)}&=\T_{\gamma v}^{(1)}=\frac{1}{8\alpha_{\gamma k}}\R_{\gamma k}\Lambda_{k}\V_{k}^{+}\Lambda_{k}\R_{\gamma k}^{T}+\frac{1}{8\alpha_{\gamma v}}\R_{\gamma v}\Lambda_{v}\V_{v}^{+}\Lambda_{v}\R_{\gamma v}^{T},
		\\
		\T_{\gamma k}^{(2)}&=\T_{\gamma v}^{(2)} =\T_{\gamma k}^{(4)} = \T_{\gamma v}^{(4)} = 0,
		\\
		\T_{\gamma k}^{(3)} &= \T_{\gamma v}^{(3)}= \frac{1}{2},
		\\
		\T_{\gamma k}^{(D)} &= \frac{2}{\alpha_{\gamma k}}\R_{\gamma k}\Lambda_{k}\V_{k}^{+}\Lambda_{k}\R_{\gamma k}^{T}.	
	\end{aligned}
\end{equation}
Clearly, the coefficients in \cref{eq:CNG} satisfy all the conditions in \cref{eq:adjoint consistency conditions} except the second one. Therefore, the CNG SAT leads to conservative but adjoint inconsistent schemes. The stability analyses of the second and third matrices in \cref{eq:matrices for stability} are the same as those presented in the proof of \cref{thm:stability}. The positive semidefiniteness of the matrix $ \A $ in \cref{eq:matrices for stability} requires all the conditions in \cref{eq:positive semidefiniteness} to be satisfied. Substituting the CNG SAT coefficients in $ \A $, we see that $ \A_{22}\succeq 0 $, and it can be shown that $ \left(\I-\A_{22}\A_{22}^{+}\right)\A_{21} = 0 $. Hence, it only remains to find conditions such that $ \A_{11}-\A_{12}\A_{22}^{+}\A_{21} \succeq 0 $, which, after simplification, yields
\begin{equation}\label{eq:CNG proof}
		\left[\begin{array}{cc}
		1 & -1\\
		-1 & 1
		\end{array}\right]\otimes\left[2\T_{\gamma k}^{(1)}-\left(\frac{1}{4\alpha_{\gamma k}}\C_{\gamma k}\V_{k}^{+}\C_{\gamma k}^{T}+\frac{1}{4\alpha_{\gamma k}}\C_{\gamma v}\V_{v}^{+}\C_{\gamma v}^{T}\right)\right] \succeq 0.
\end{equation}
The inequality in \cref{eq:CNG proof} is satisfied by the $ \T_{\gamma k}^{(1)} = \T_{\gamma v}^{(1)} $ coefficient given in \cref{eq:CNG}.

\section{Numerical Results}\label{sec:numerical results}
We consider the one-dimensional Poisson problem with Dirichlet and Neumann boundary conditions, 
\begin{align} \label{eq:Poisson problem}
	-\frac{\partial^2{{\cal U}}}{\partial x^2} & ={\cal F}\quad{\rm in}\;\Omega=\left[0,1\right], &&  {\cal U}\big|_{x=0} ={\cal U}_{D}, && \frac{\partial{\cal U}}{\partial x}\bigg|_{x=1}={\cal U}_{N}.
\end{align}
We use the method of manufactured solution and let $ {\cal U}=\cos(30x) $ as in \cite{eriksson2018dual}; thus, $ {\cal F}=30^{2}\cos(30x) $. We also consider a compatible linear functional given by 
\begin{equation}
	{\cal I}\left({\cal U}\right)=\int_{0}^{1}\cos^{2}\left(30x\right){\rm d}\Omega+\frac{1}{30^{2}}(1-30\sin(30)-\cos(30))\cos(30).
\end{equation}
We are interested in the convergence of the solution and functional errors under mesh refinement. Figure \ref{fig:solution convergence} presents the solution convergence for discretizations with the diagonal-norm narrow-stencil CSBP operators in \cite{mattsson2004summation} and the generalized SBP operators in \cite{del2015SecondDerivative} that have an invertible $ \D_{b,k} $ matrix and satisfy the accuracy conditions, \ie, the $ p=\{2,3\} $ HGTL and $ p=2 $ HGT operators. Note that the degree four HGTL operator in \cite{del2015SecondDerivative} meets the accuracy requirements given in \cref{def:Dk,def:D2} to order $ h^5 $ only, while the degree three and four HGT operators have $ \D_{b,k} $ matrices that cannot be modified as described in \cref{sec:theoretical results} to ensure their invertibility. The interface weight parameters in the SAT coefficients are set as $ \alpha_{\gamma k} = \alpha_{\gamma v} = 1/2 $ in all cases.  

The solution error is computed as 
\[ \sqrt{\sum_{\Omega_{k}\in \fnc{T}_h}(\bm{u}_{h,k}-\bm{u}_k)^T\H_k(\bm{u}_{h,k}-\bm{u}_k)}.\] The convergence rates in \cref{fig:solution convergence} through \cref{fig:functional convergence dense} are calculated by fitting a line through the error values on the mesh resolutions indicated by the short, thin lines, and ``dof" stands for the number of degrees of freedom in the spatial discretization. Figure \ref{fig:solution convergence} shows that a solution convergence rate of $ p+2 $ is attained when order-matched narrow-stencil operators are coupled with the adjoint consistent SATs, except with the degree one SBP operators. The adjoint inconsistent SATs, BO and CNG, exhibit solution convergence rates of p+2 with all the order-matched narrow-stencil SBP operators, except the degree one and three CSBP operators which yield convergence rates of p + 1. This is consistent with the results\footnote{Although not presented here, we observe a solution convergence rate of $ p+2 $ with all the diagonal-norm narrow-stencil CSBP operators presented in \cite{del2015SecondDerivative}.} presented in \cite{del2015SecondDerivative} but somewhat surprising since the well-known even-odd convergence phenomenon (see \eg, \cite{carpenter2010revisiting,shu2001different,kirby2005selecting}) that the BO method displays is not observed with the narrow-stencil SBP operators. Numerical experiments in \cite{worku2020simultaneous} show that the BO and CNG SATs converge at rates of $ p+1 $ and $ p $ when implemented with odd and even degree multidimensional SBP operators, respectively. Indeed, this even-odd convergence phenomenon is also observed when the BO and CNG SATs are implemented with the Legendre-Gauss-Lobatto (LGL) and Legendre-Gauss (LG) wide-stencil SBP operators, as shown in \cref{fig:solution convergence BO}. However, this trend does not hold consistently with the wide-stencil CSBP and HGT operators, as convergence rates of $ p+1 $ are achieved with the $ p=4 $ operators, as depicted in \cref{fig:solution convergence3 p=4}. Therefore, it appears that the even-odd convergence property of the BO and CNG SATs is dependent on the type of SBP operator used, and it is not observed with the diagonal-norm narrow-stencil SBP operators consistently. Implementations of the diagonal-norm wide-stencil SBP operators with the BR2 and LDG SATs lead to solution convergence rates of $ p+1 $, as depicted in \cref{fig:solution convergence BR2}. Finally, \cref{fig:solution convergence dense} shows that the block-norm wide- and narrow-stencil SBP operators presented in \cite{mattsson2013solution}, denoted by CSBP2, achieve a solution convergence rate of $ 2p $ regardless of the type of SAT used. 

The functional error is calculated as $ |I_h(\bm{u}_h) - \fnc{I}(\fnc{U})|$. Figure \ref{fig:functional convergence} shows the functional convergence rates resulting from discretizations with the order-matched narrow-stencil SBP operators. As established in \cref{thm:functional superconvergence}, the figure shows that the functional superconverges at a rate of $ 2p $ when adjoint consistent SATs are used. The adjoint inconsistent SATs yield larger functional error values and lower functional convergence rates when coupled with the degree three and four diagonal-norm narrow-stencil SBP operators, but they attain lower error values and a convergence rate of $ 2p $ for degree one and two operators. As can be seen from \cref{fig:functional convergence BO}, when the adjoint inconsistent SATs are used with the diagonal-norm wide-stencil SBP operators, convergence rates of $ 2p $ are not attained, except for the $ p=1 $ case. It is also evident from \cref{fig:functional convergence,fig:functional convergence BO} that in most cases the diagonal-norm narrow-stencil operators result in lower functional error and larger functional convergence rates than the diagonal-norm wide-stencil operators when used with the adjoint inconsistent SATs. Figure \ref{fig:functional convergence BR2} shows that the adjoint consistent SATs lead to functional convergence rates of $ 2p $ when used with the diagonal-norm wide-stencil SBP operators. Comparing the results depicted in \cref{fig:functional convergence BR2,fig:functional convergence}, we can conclude that diagonal-norm narrow-stencil SBP operators do not offer better functional convergence rates than diagonal-norm wide-stencil SBP operators when coupled with the adjoint consistent SATs, which agrees with the theory. Similarly, the functional convergence rates attained with the block-norm wide- and narrow-stencil SBP operators are comparable, as depicted in \cref{fig:functional convergence dense}. Furthermore, the functional convergence rate with the block-norm SBP operators is $ 2p $ for adjoint consistent as well as adjoint inconsistent schemes, except for the degree three block-norm wide-stencil SBP operator, which exhibits a $ 2p-1 $ convergence rate when implemented with the BO and CNG SATs. In most cases, the BO and CNG SATs yield lower functional error values than the BR2 and LDG SATs when implemented with the block-norm narrow-stencil SBP operators.  
\begin{figure}[!t]
	\centering
	\subfloat[][$p=1$]
		{\includegraphics[scale=0.25]{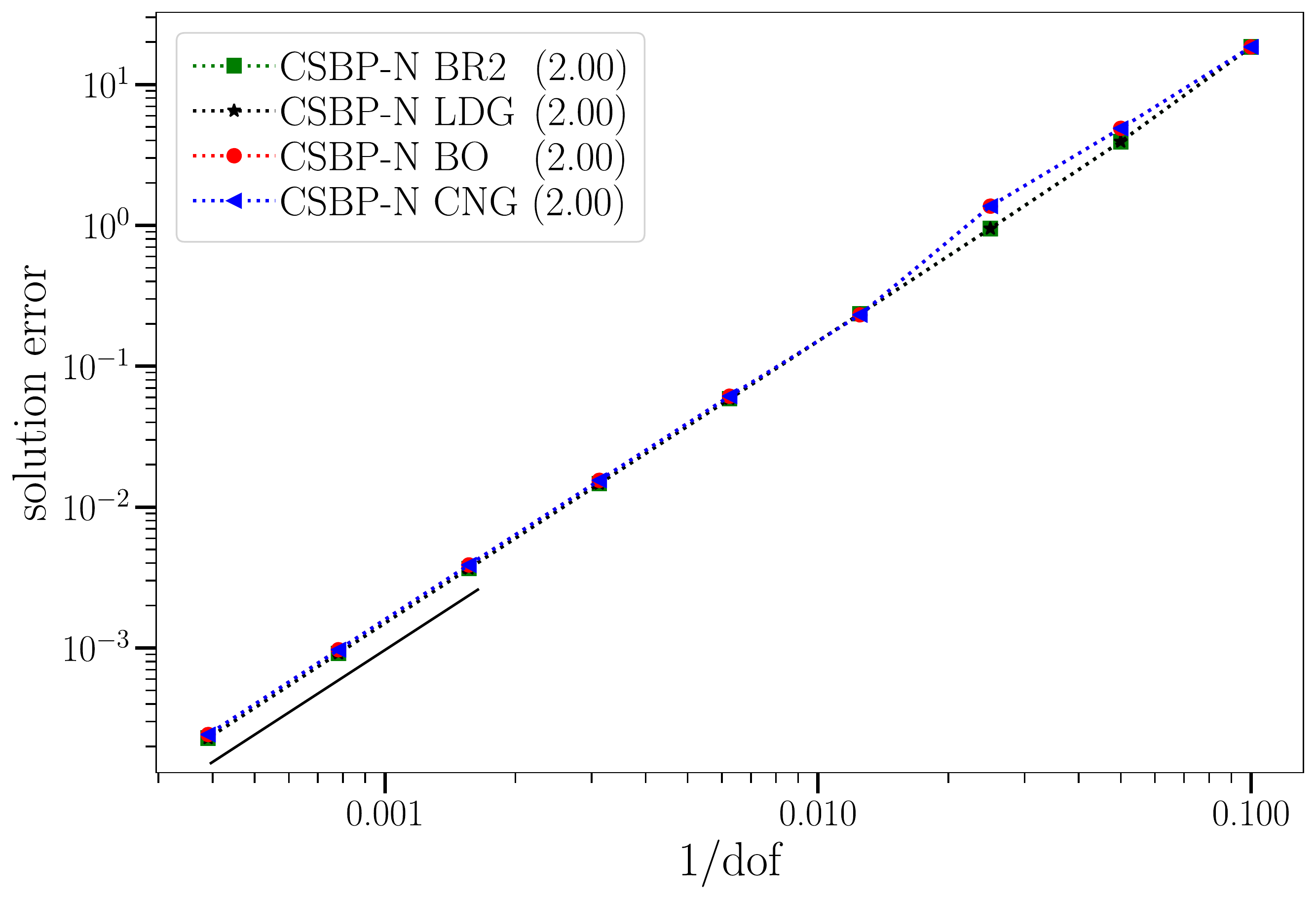} \label{fig:solution convergence p=1}}
	\hfill
	\subfloat[][$p=2$]
		{\includegraphics[scale=0.25]{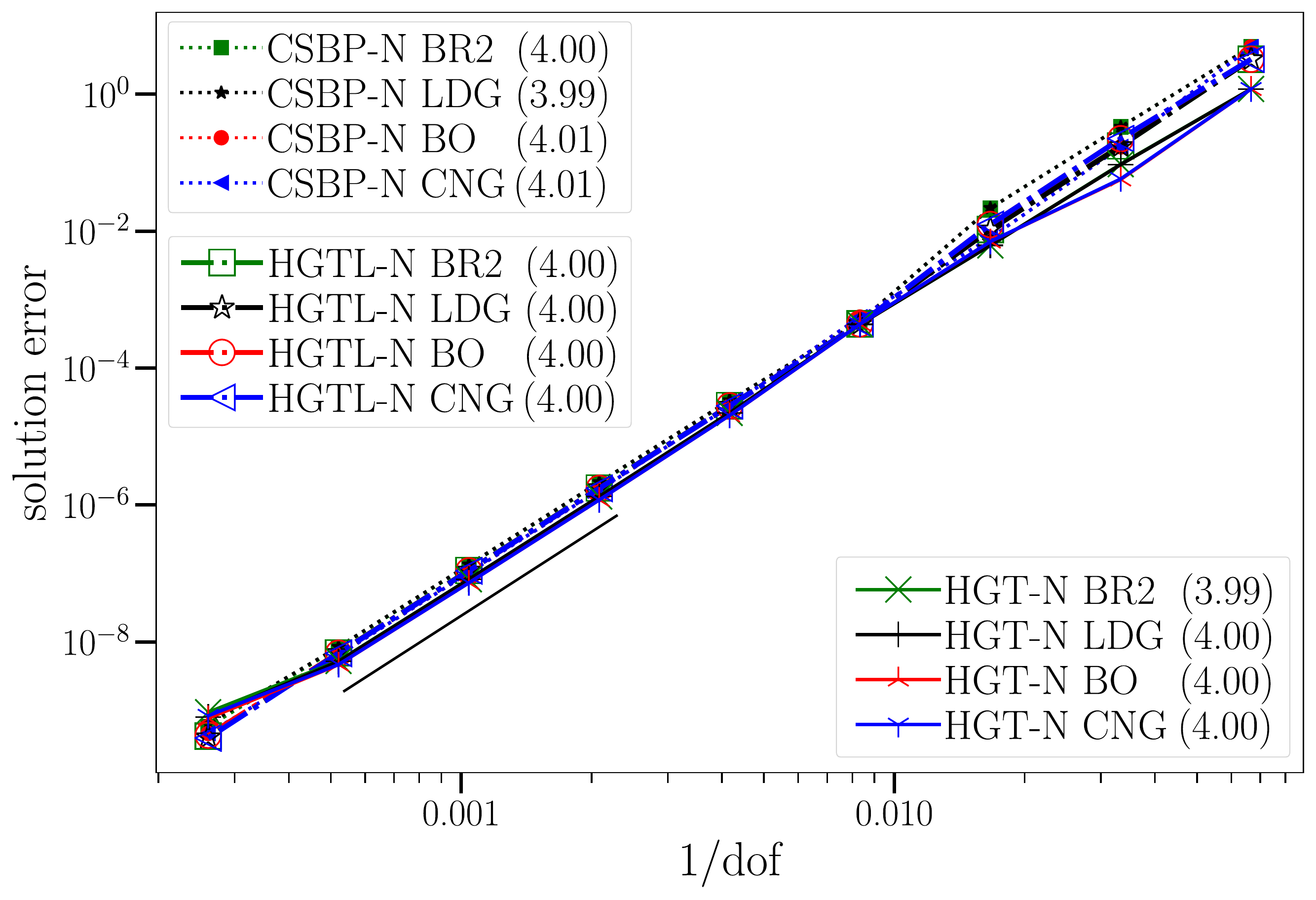} \label{fig:solution convergence p=2}}
	\\
	\subfloat[][$p=3$]
		{\includegraphics[scale=0.25]{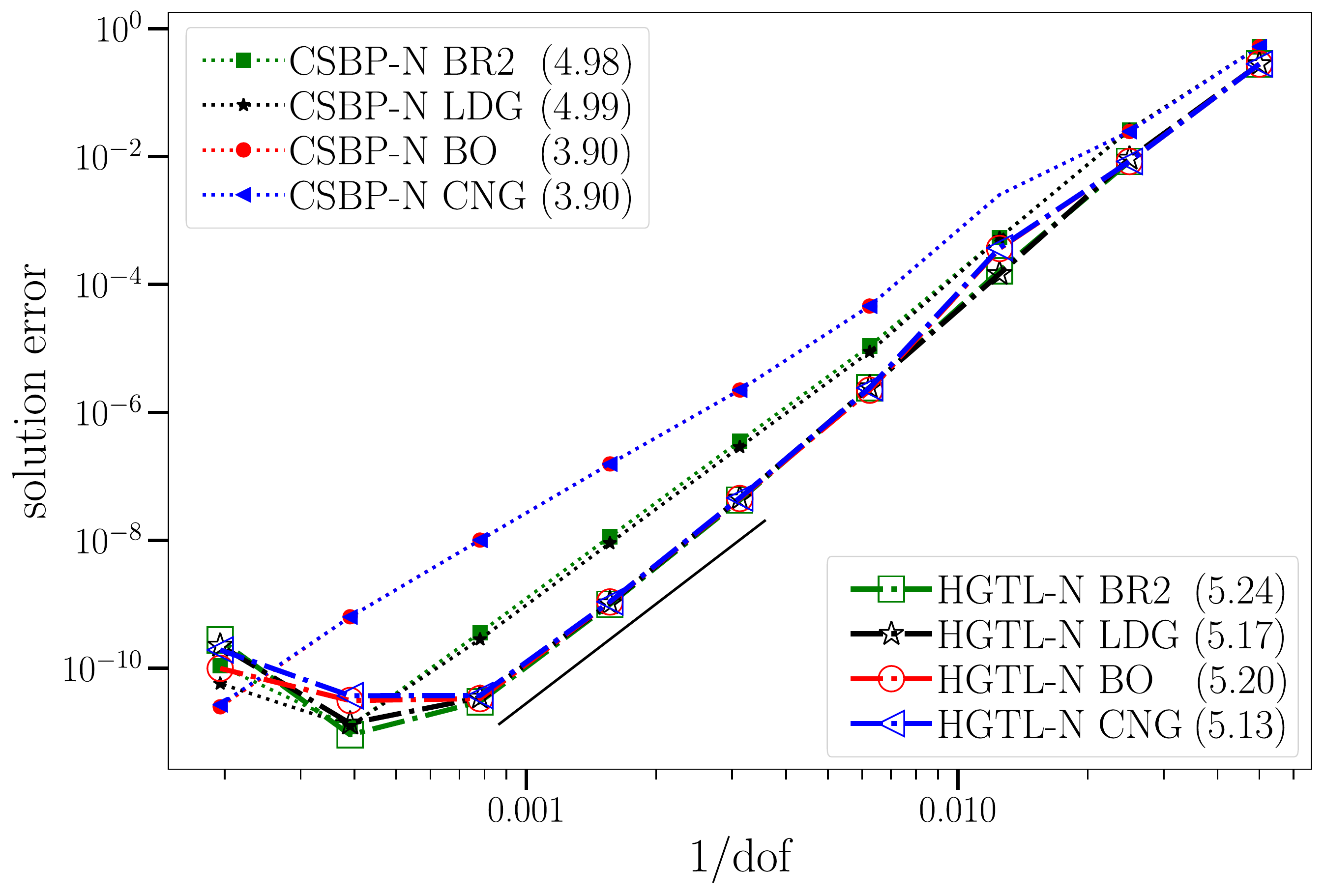} \label{fig:solution convergence p=3}}
	\hfill
	\subfloat[][$p=4$]
		{\includegraphics[scale=0.25]{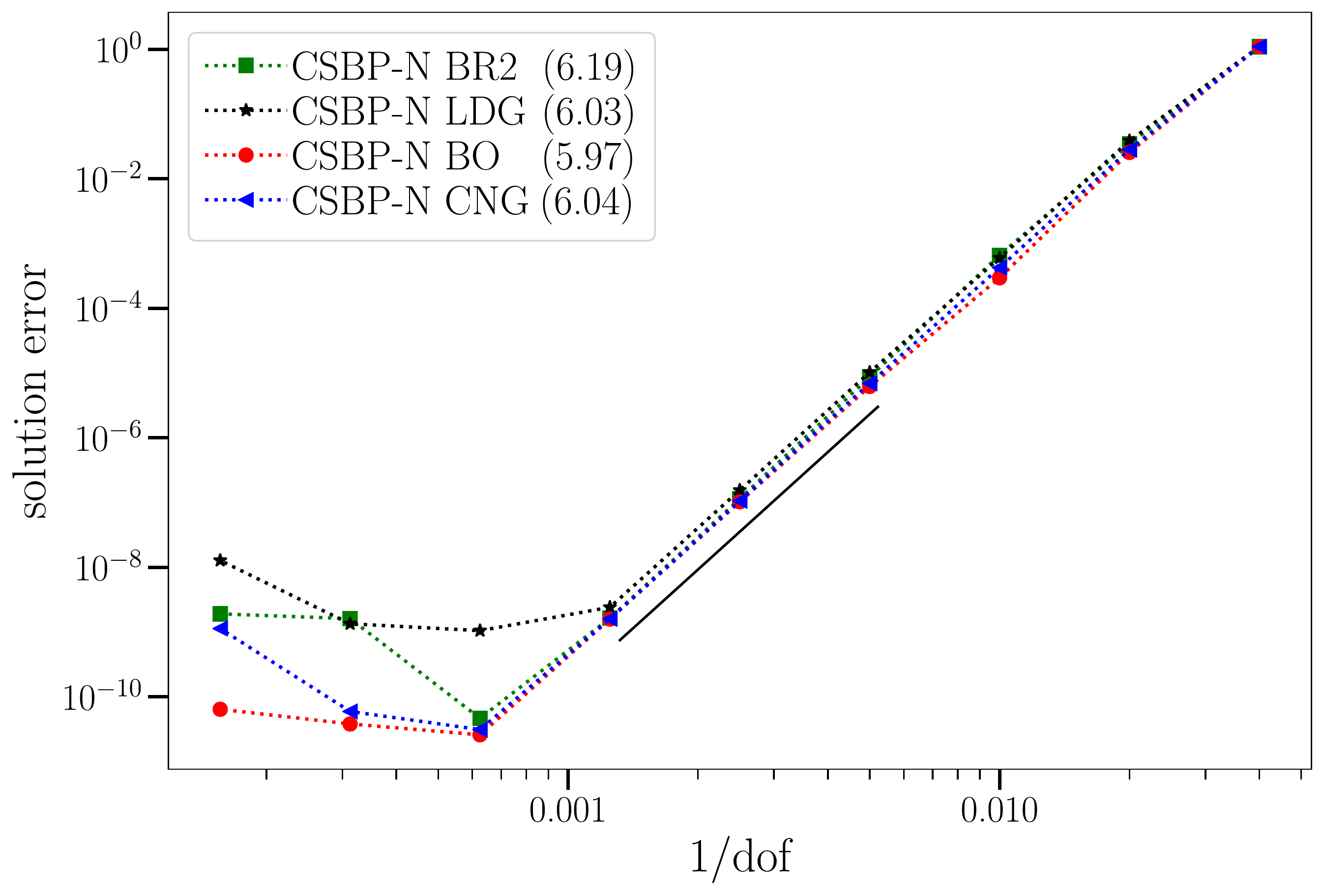} \label{fig:solution convergence p=4}}
	\caption{\label{fig:solution convergence} Solution convergence under mesh refinement. The values in parentheses are the convergence rates, and ``N" stands for narrow-stencil SBP operator.}
\end{figure}

\begin{figure}[!t]
	\centering
	\subfloat[][$p=1$]{
		\includegraphics[scale=0.25]{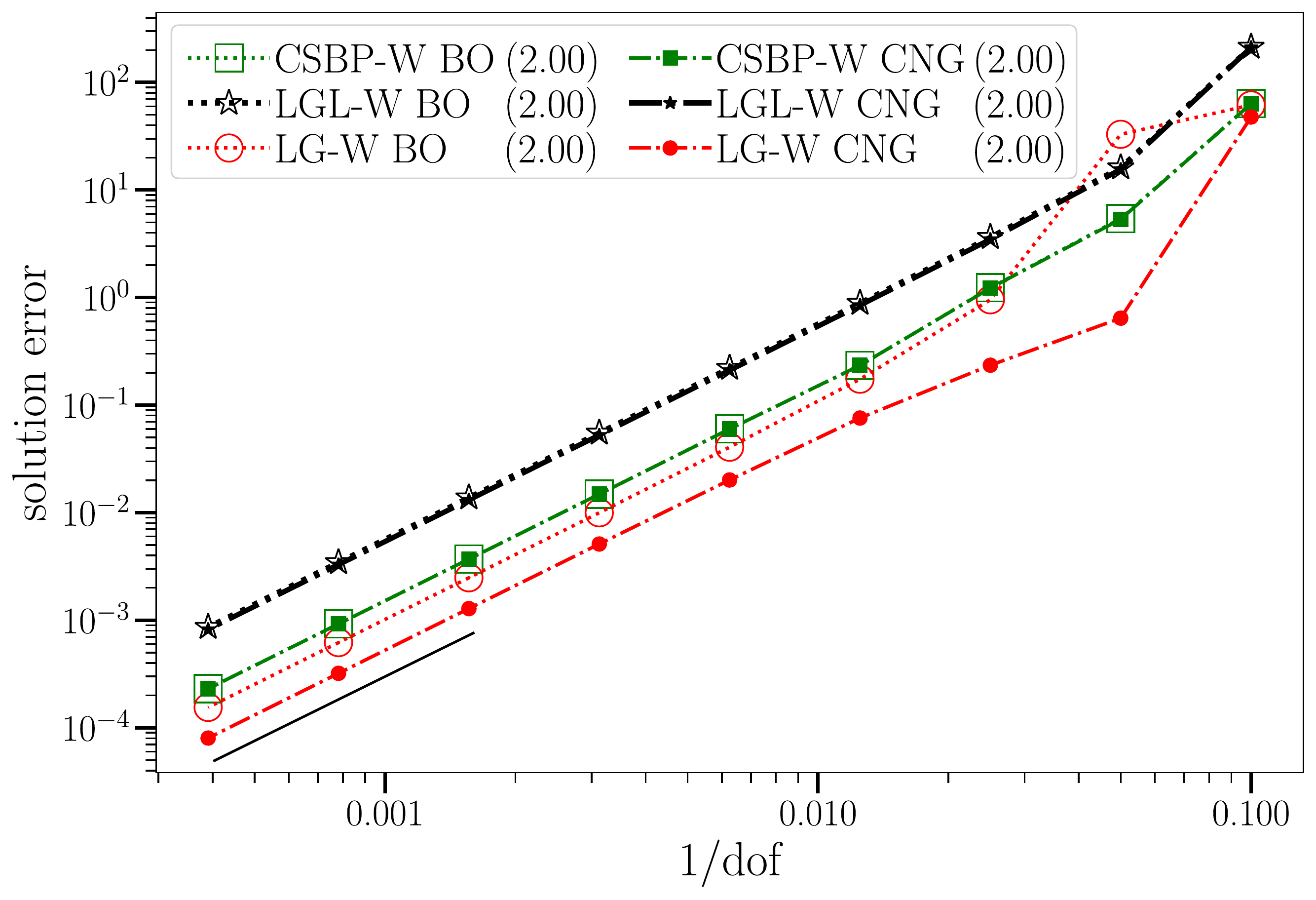}
		\label{fig:solution convergence3 p=1}}
	\hfill
	\subfloat[][$p=2$]{
		\includegraphics[scale=0.25]{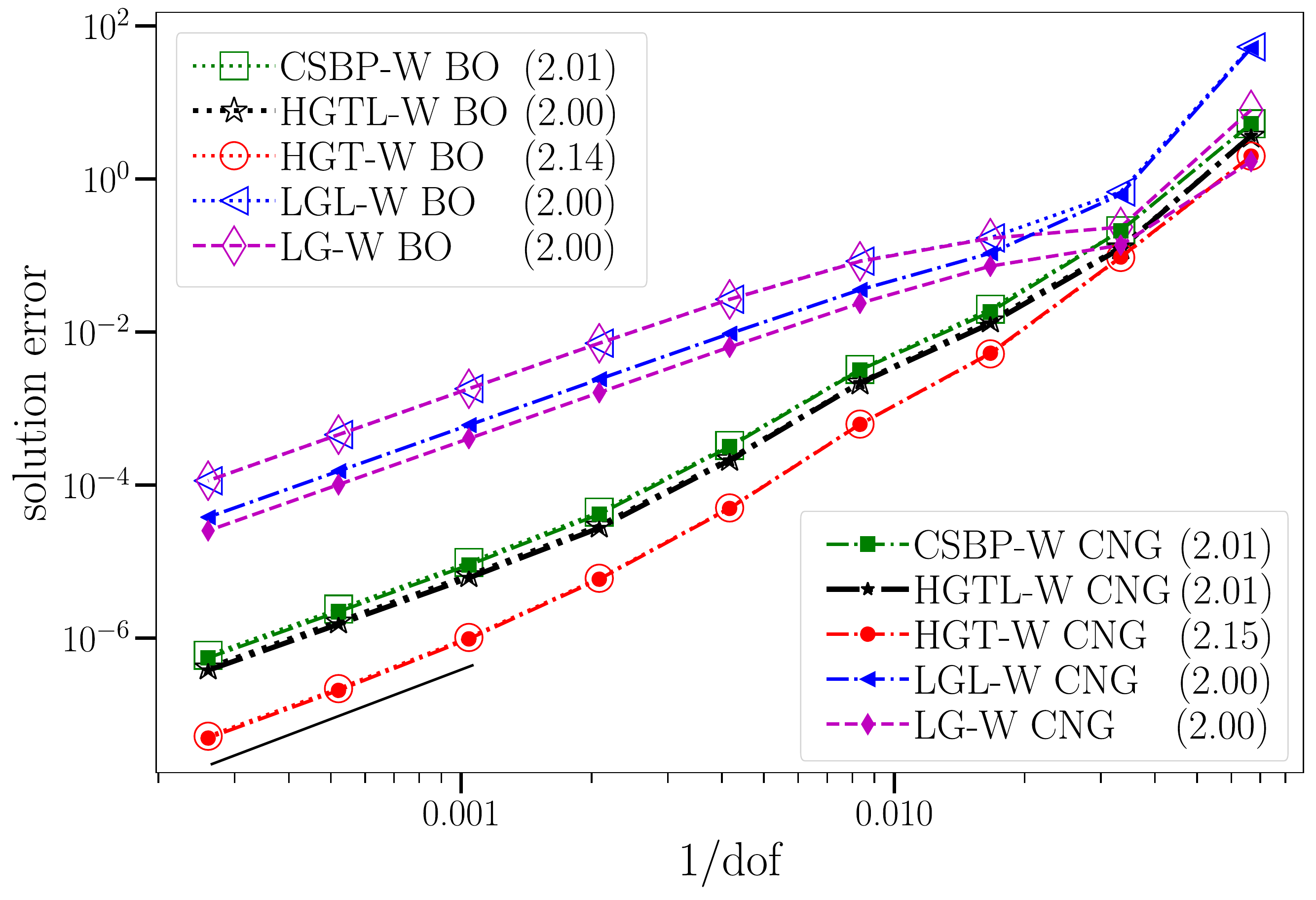}
		\label{fig:solution convergence3 p=2}}
	\\
	\subfloat[][$p=3$]{
		\includegraphics[scale=0.25]{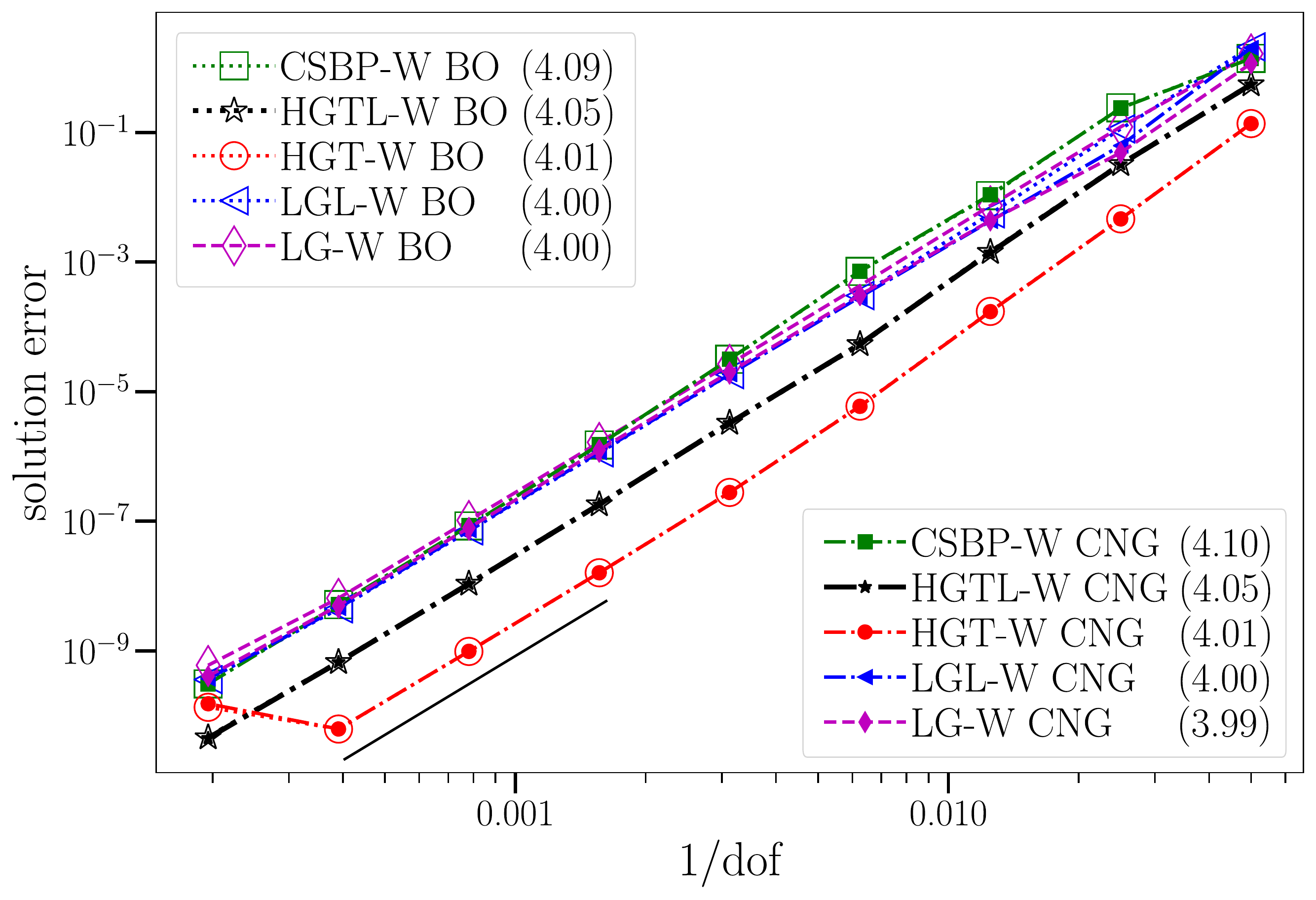}
		\label{fig:solution convergence3 p=3}}
	\hfill
	\subfloat[][$p=4$]{
		\includegraphics[scale=0.25]{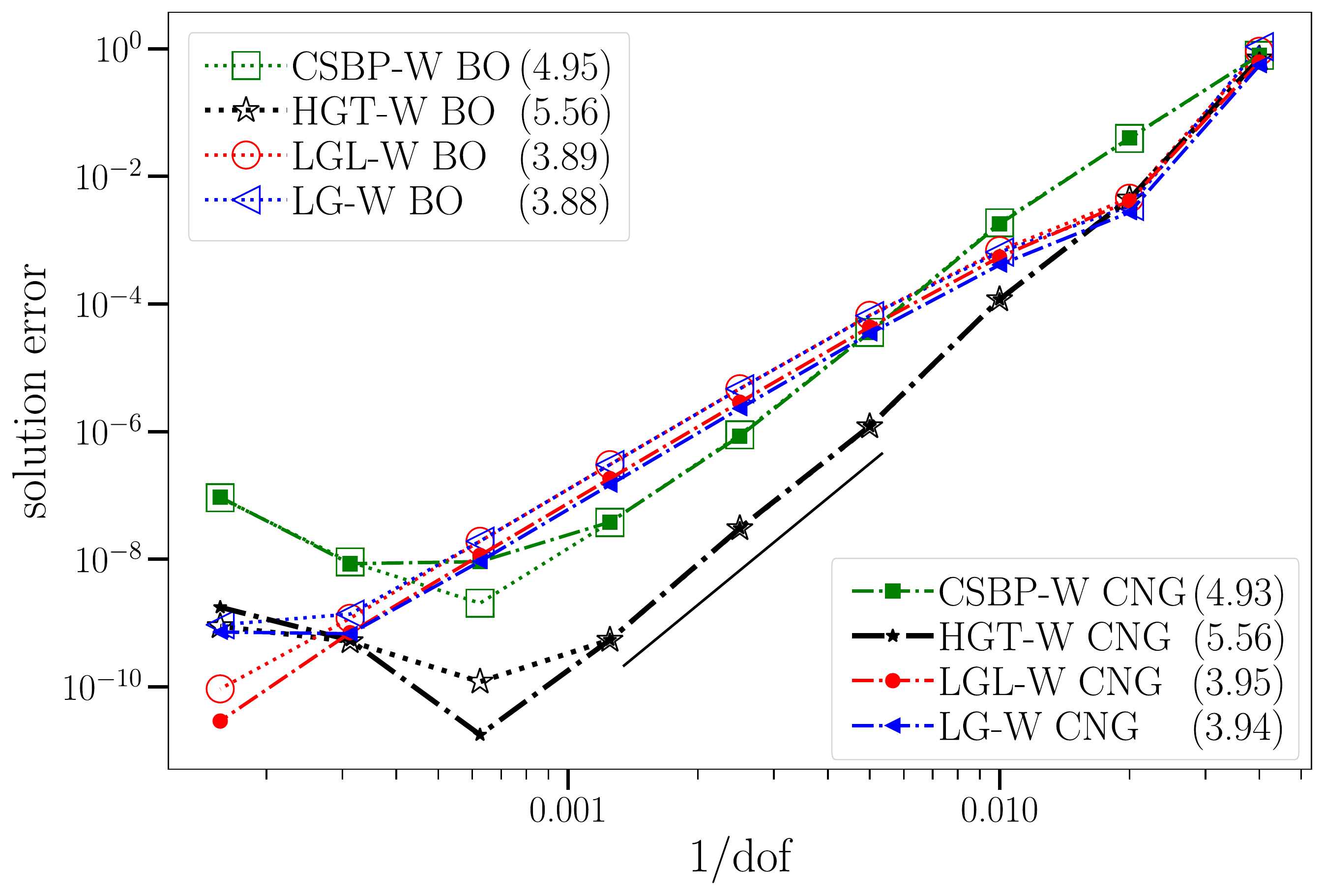}
		\label{fig:solution convergence3 p=4}}
	\caption{\label{fig:solution convergence BO} Solution convergence under mesh refinement with adjoint inconsistent SATs. The values in parentheses are the convergence rates, and ``W" stands for wide-stencil SBP operator.}
\end{figure}

\begin{figure}[!t]
	\centering
	\subfloat[][$p=1$]{
		\includegraphics[scale=0.25]{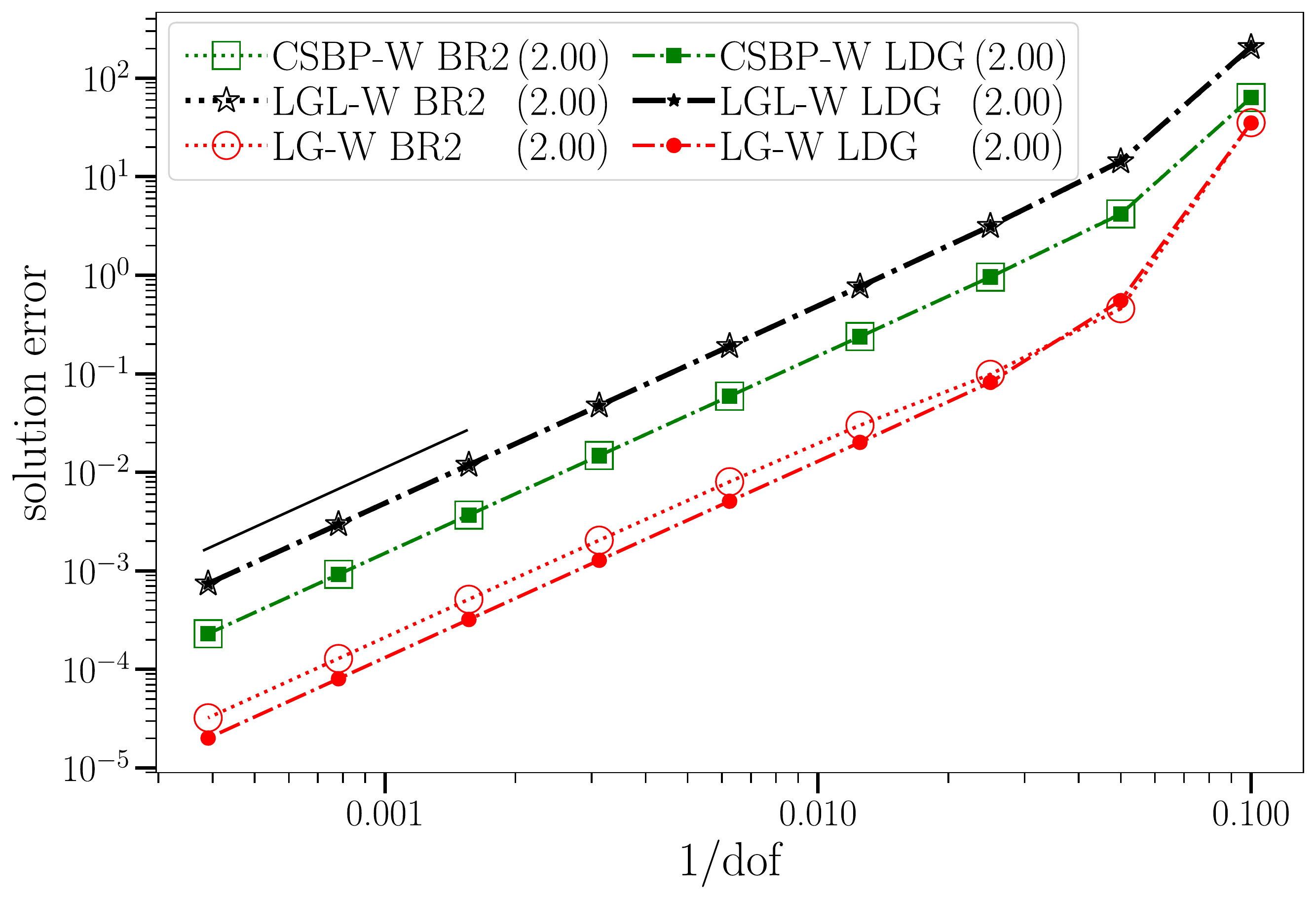}
		\label{fig:solution convergence2 p=1}}
	\hfill
	\subfloat[][$p=2$]{
		\includegraphics[scale=0.25]{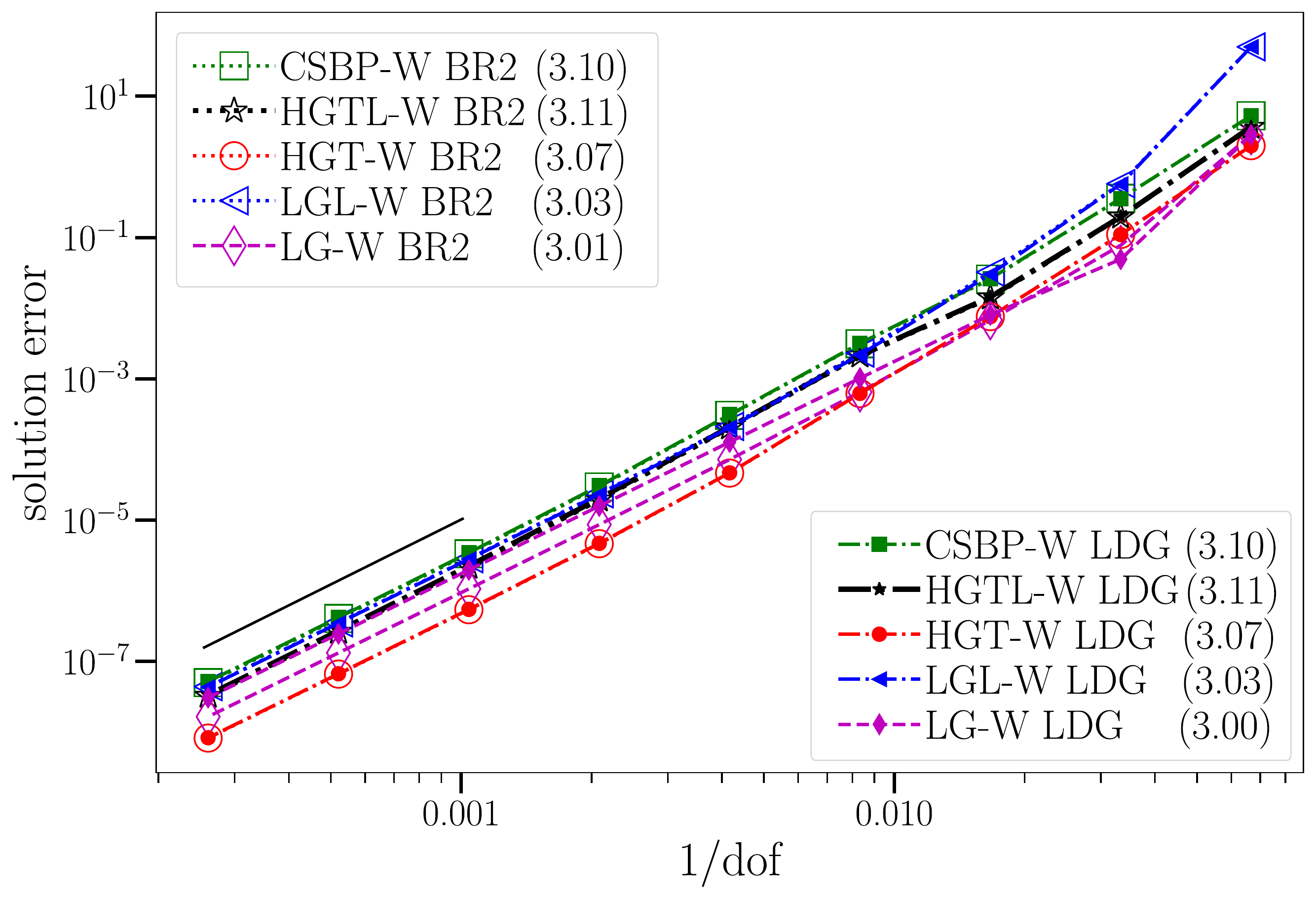}
		\label{fig:solution convergence2 p=2}}
	\\
	\subfloat[][$p=3$]{
		\includegraphics[scale=0.25]{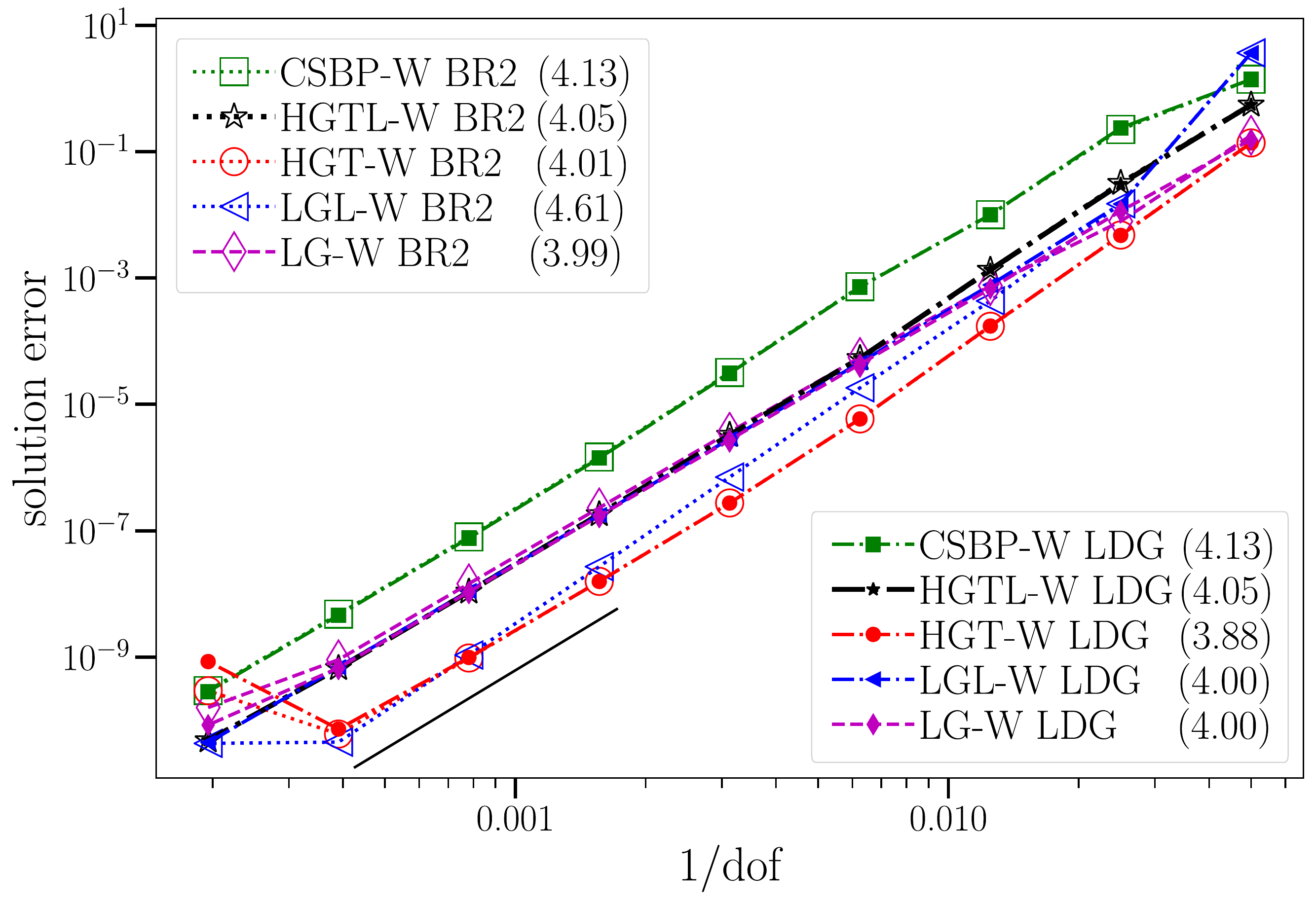}
		\label{fig:solution convergence2 p=3}}
	\hfill
	\subfloat[][$p=4$]{
		\includegraphics[scale=0.25]{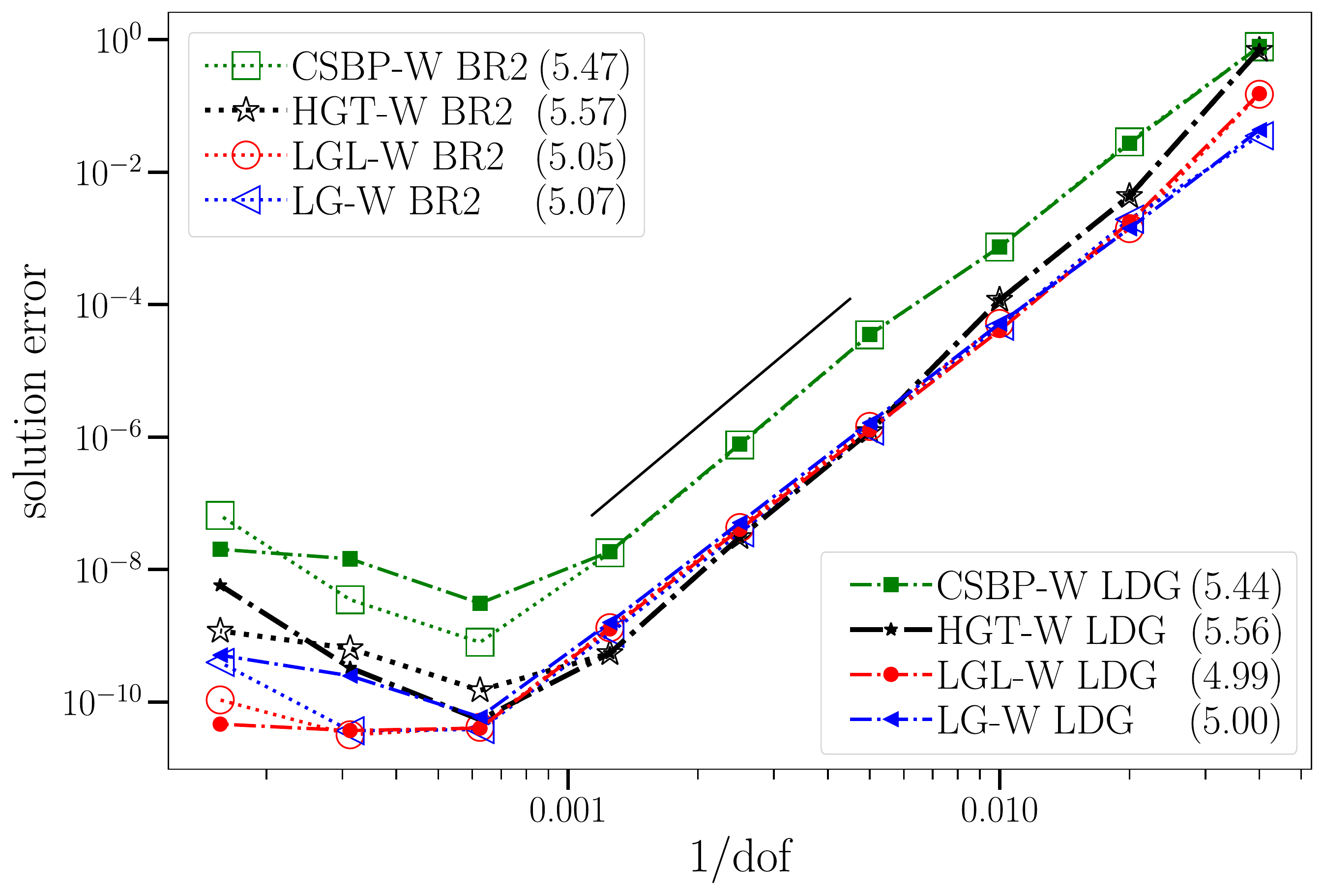}
		\label{fig:solution convergence2 p=4}}
	\caption{\label{fig:solution convergence BR2} Solution convergence under mesh refinement with adjoint consistent SATs. The values in parentheses are the convergence rates, and ``W" stands for wide-stencil SBP operator.}
\end{figure}

\begin{figure}[!t]
	\centering
	\subfloat[][$p=1$]{
		\includegraphics[scale=0.25]{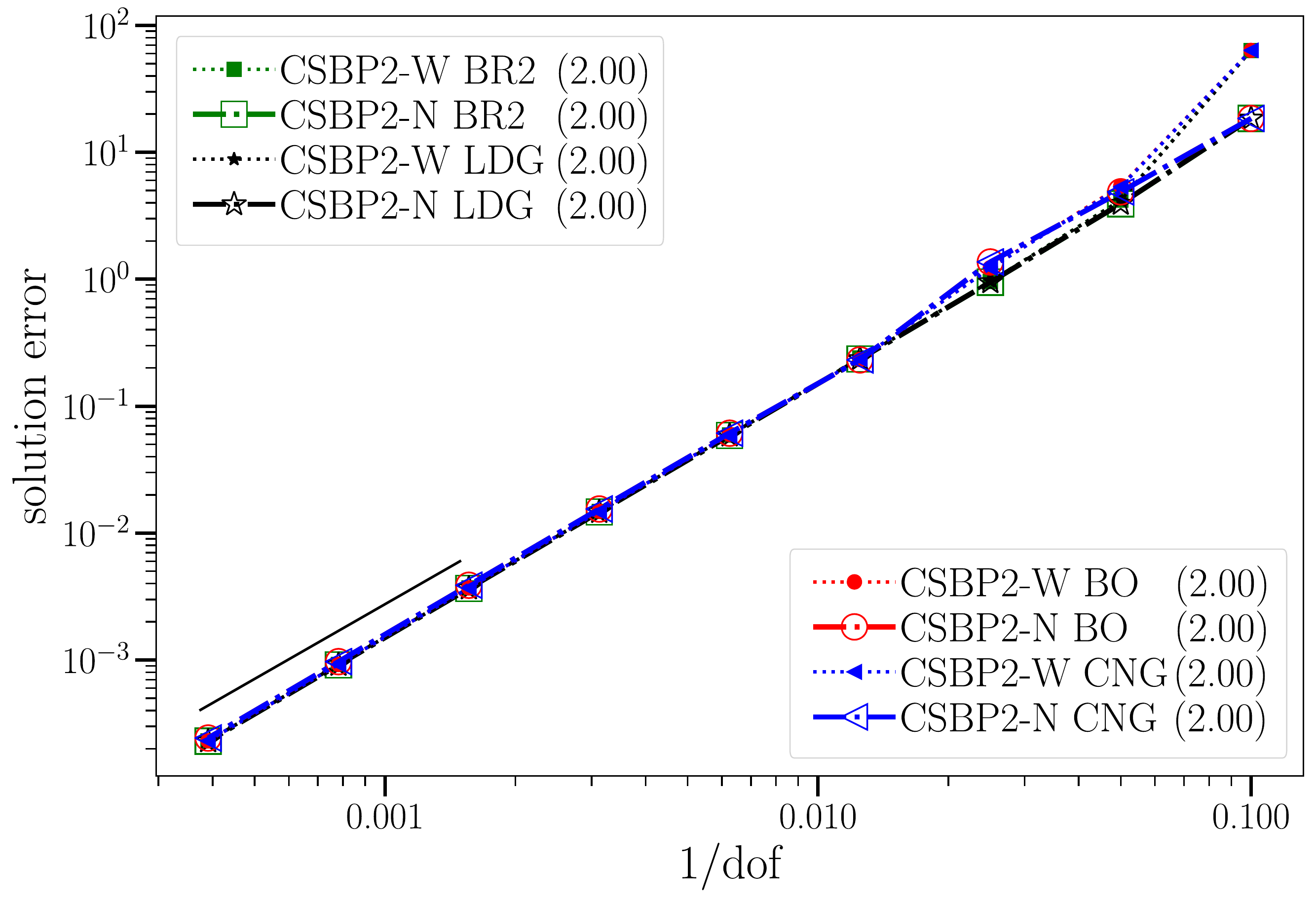}
		\label{fig:solution convergence4 p=1}}
	\hfill
	\subfloat[][$p=2$]{
		\includegraphics[scale=0.25]{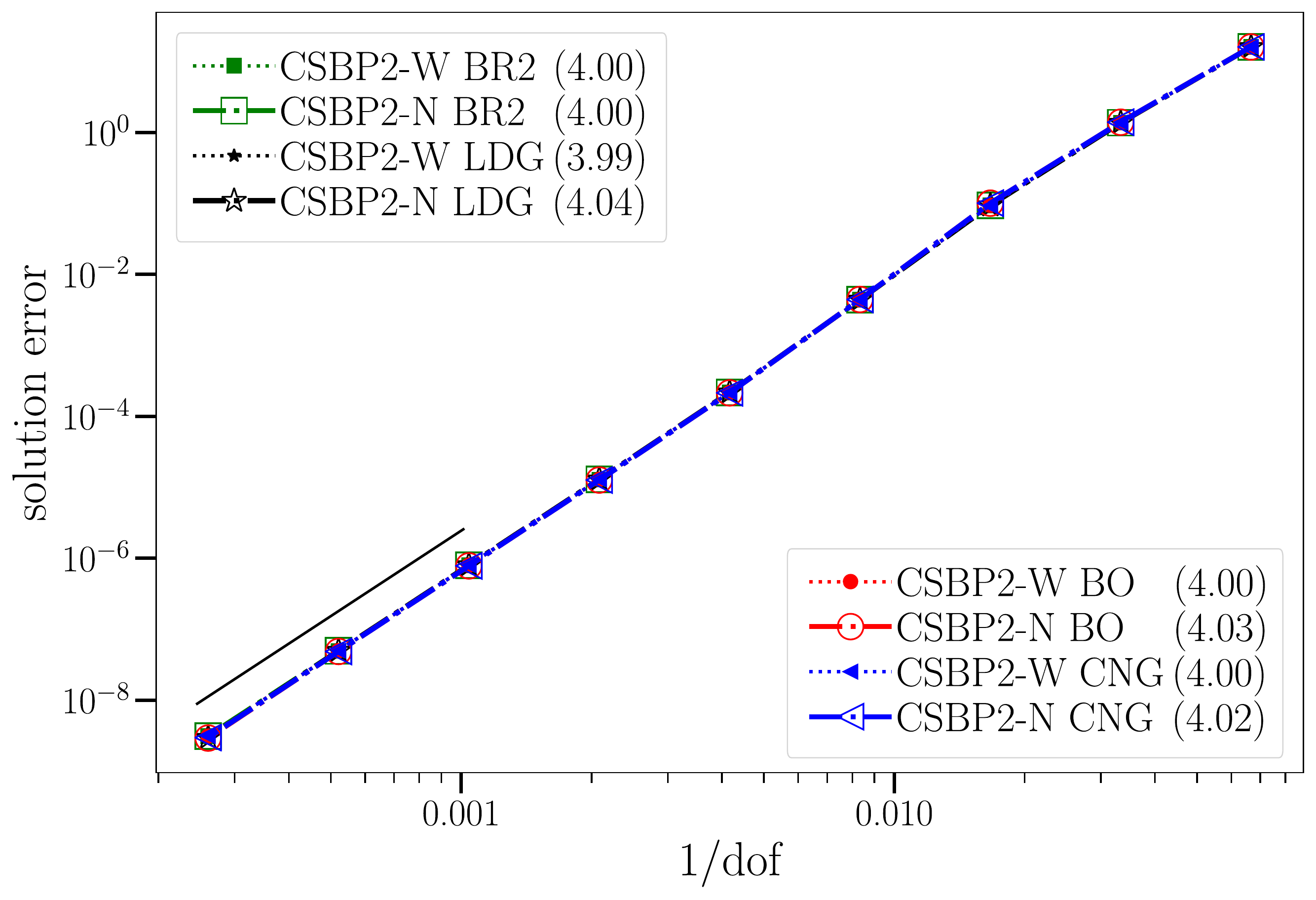}
		\label{fig:solution convergence4 p=2}}
	\\
	\subfloat[][$p=3$]{
		\includegraphics[scale=0.25]{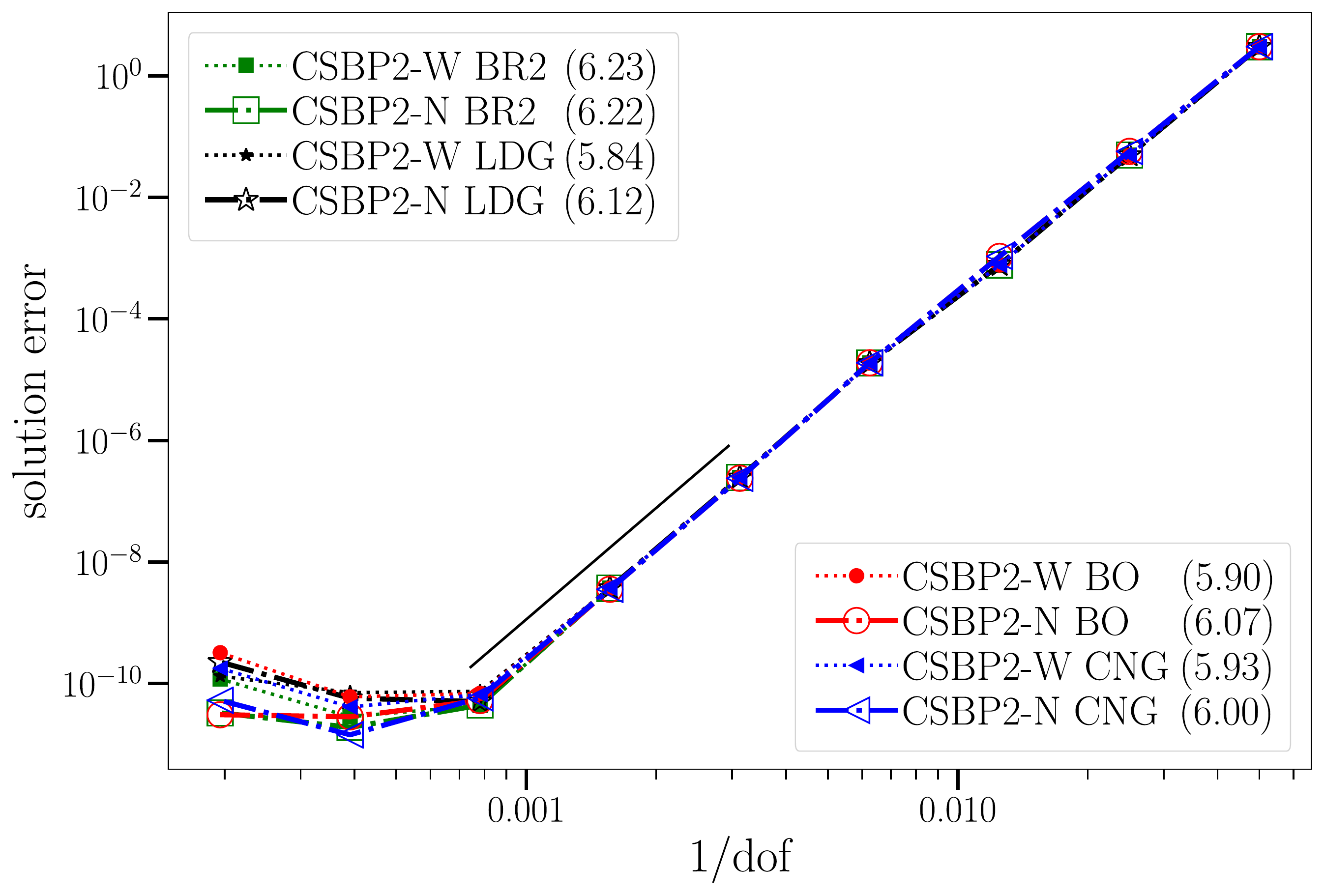}
		\label{fig:solution convergence4 p=3}}
	\hfill
	\subfloat[][$p=4$]{
		\includegraphics[scale=0.25]{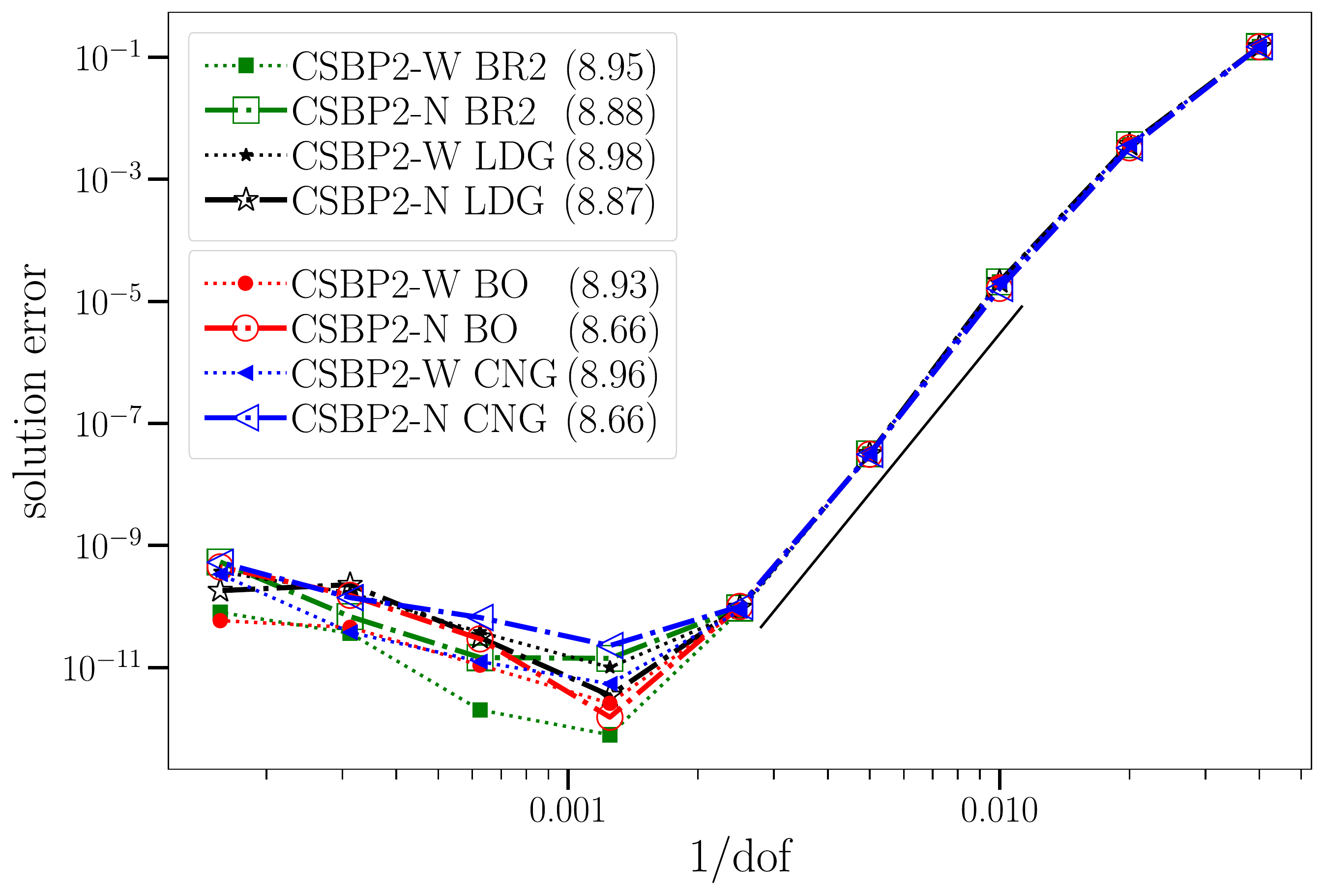}
		\label{fig:solution convergence4 p=4}}
	\caption{\label{fig:solution convergence dense} Solution convergence under mesh refinement with block-norm wide-stencil (``W") and narrow-stencil (``N") CSBP operators. The values in parentheses are the convergence rates.}
\end{figure}

\begin{figure}[!t]
	\centering
	\subfloat[][$p=1$]{
		\includegraphics[scale=0.25]{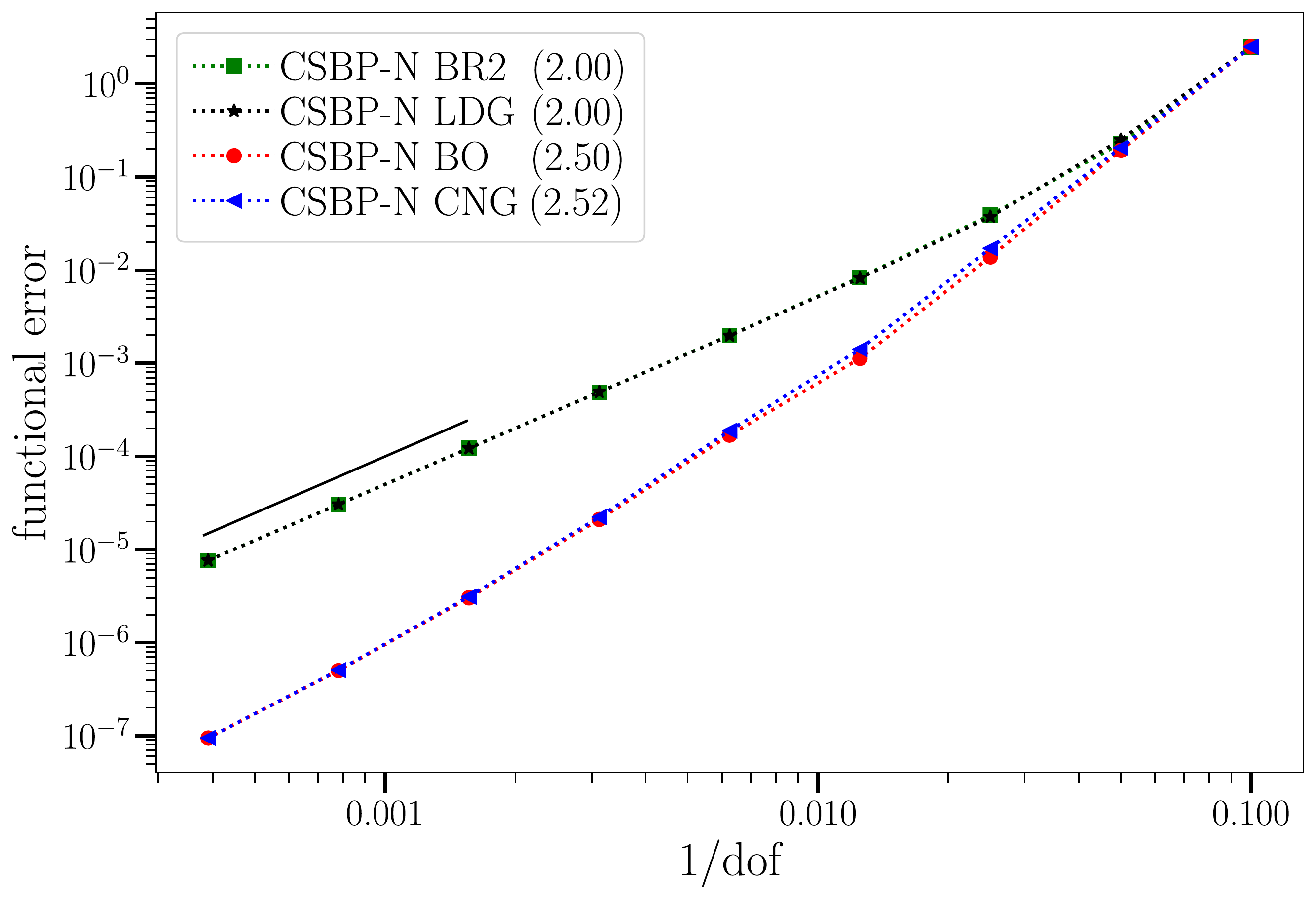}
		\label{fig:functional convergence p=1}}
	\hfill
	\subfloat[][$p=2$]{
		\includegraphics[scale=0.25]{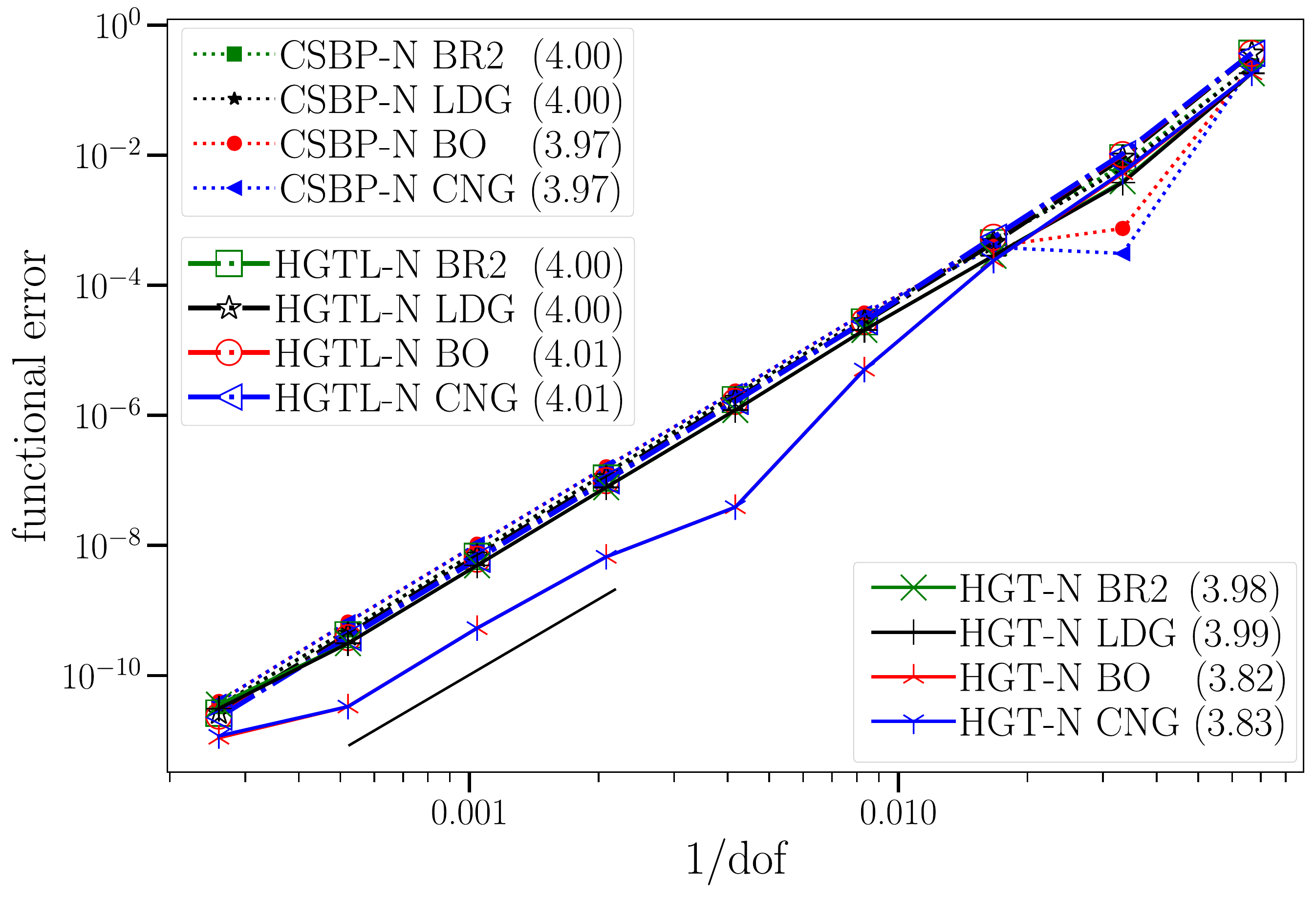}
		\label{fig:functional convergence p=2}}
	\\
	\subfloat[][$p=3$]{
		\includegraphics[scale=0.25]{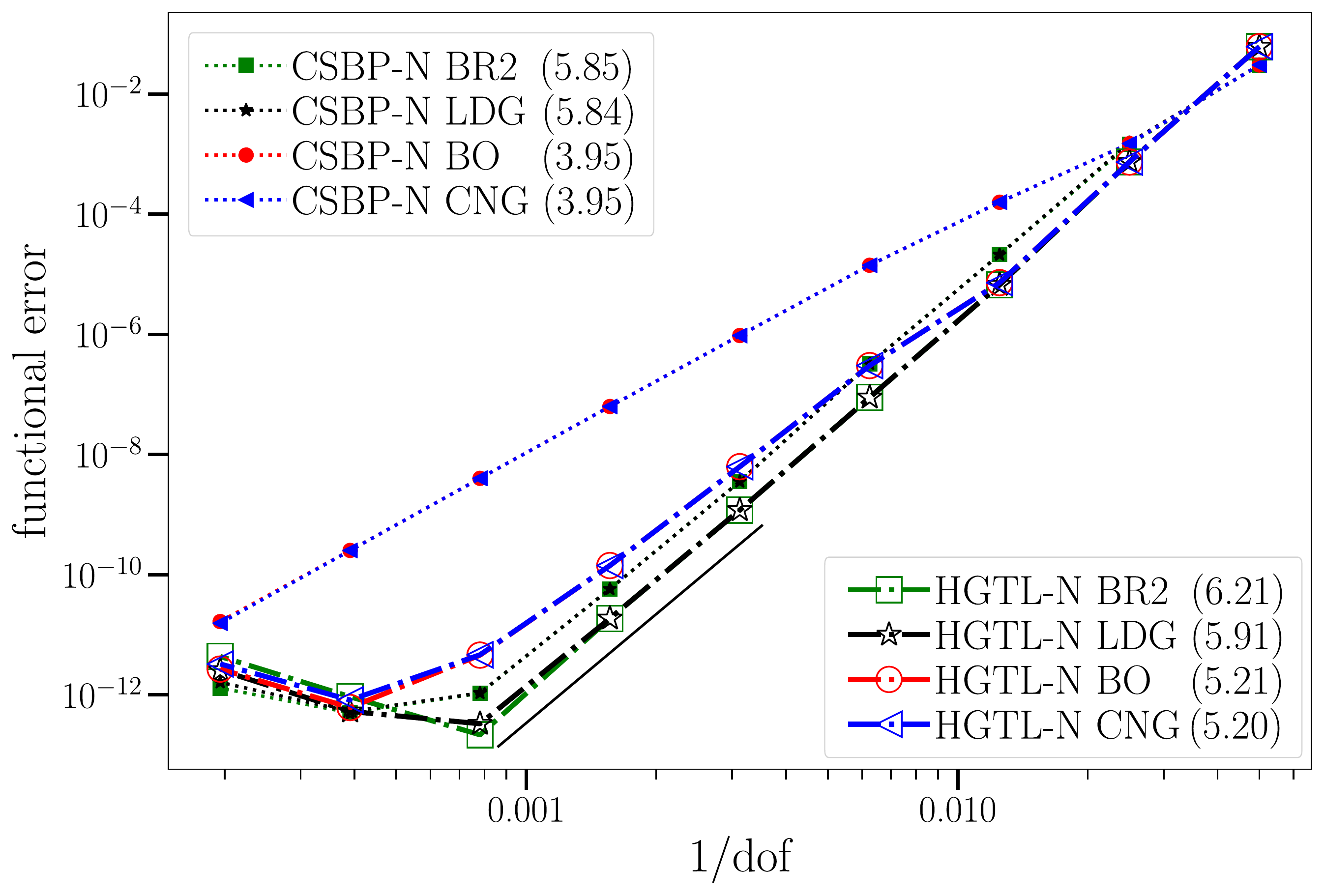}
		\label{fig:functional convergence p=3}}
	\hfill
	\subfloat[][$p=4$]{
		\includegraphics[scale=0.25]{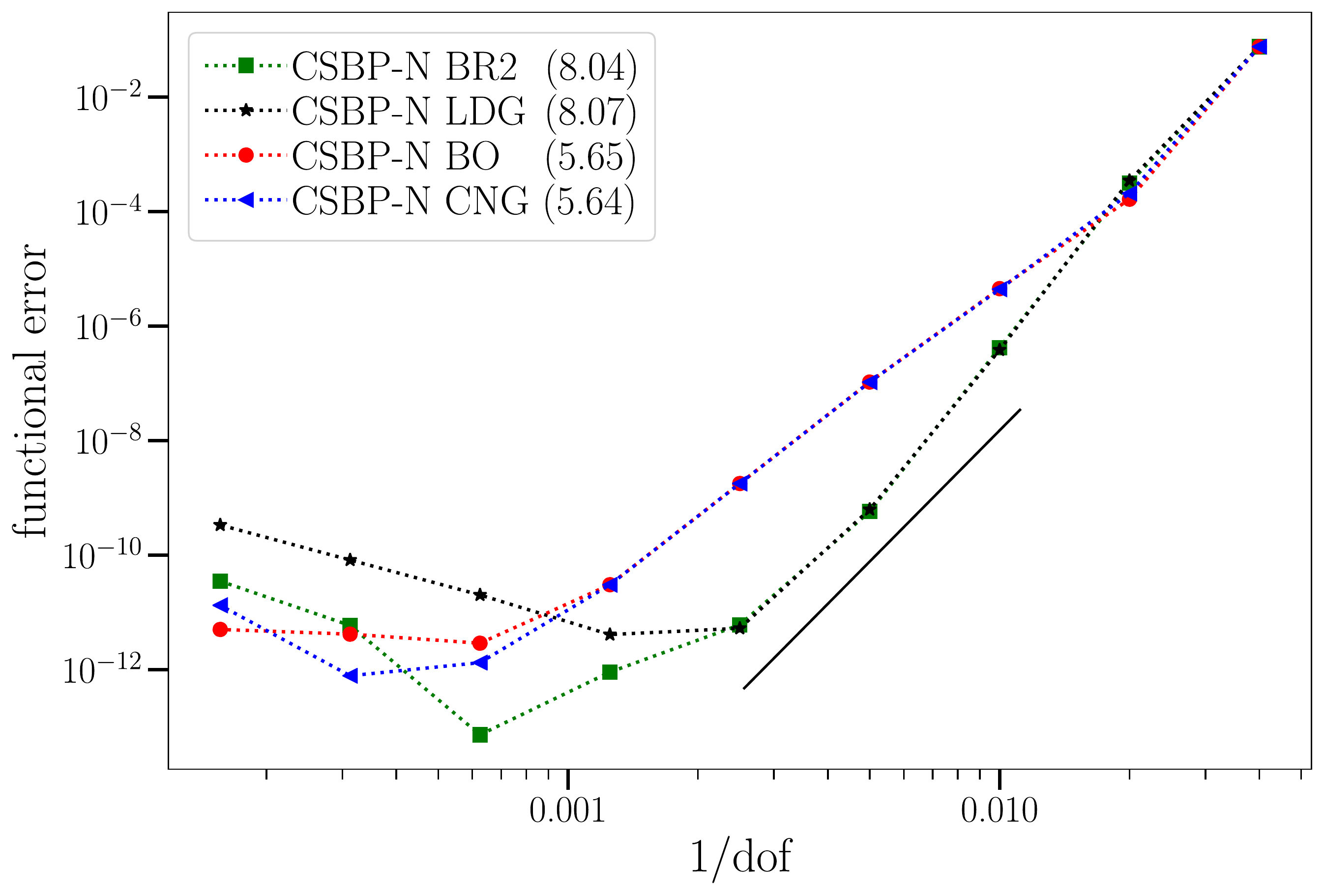}
		\label{fig:functional convergence p=4}}
	\caption{\label{fig:functional convergence} Functional convergence under mesh refinement. The values in parentheses are the convergence rates, and ``N" stands for narrow-stencil SBP operator.}
\end{figure}

\begin{figure}[!t]
	\centering
	\subfloat[][$p=1$]{
		\includegraphics[scale=0.25]{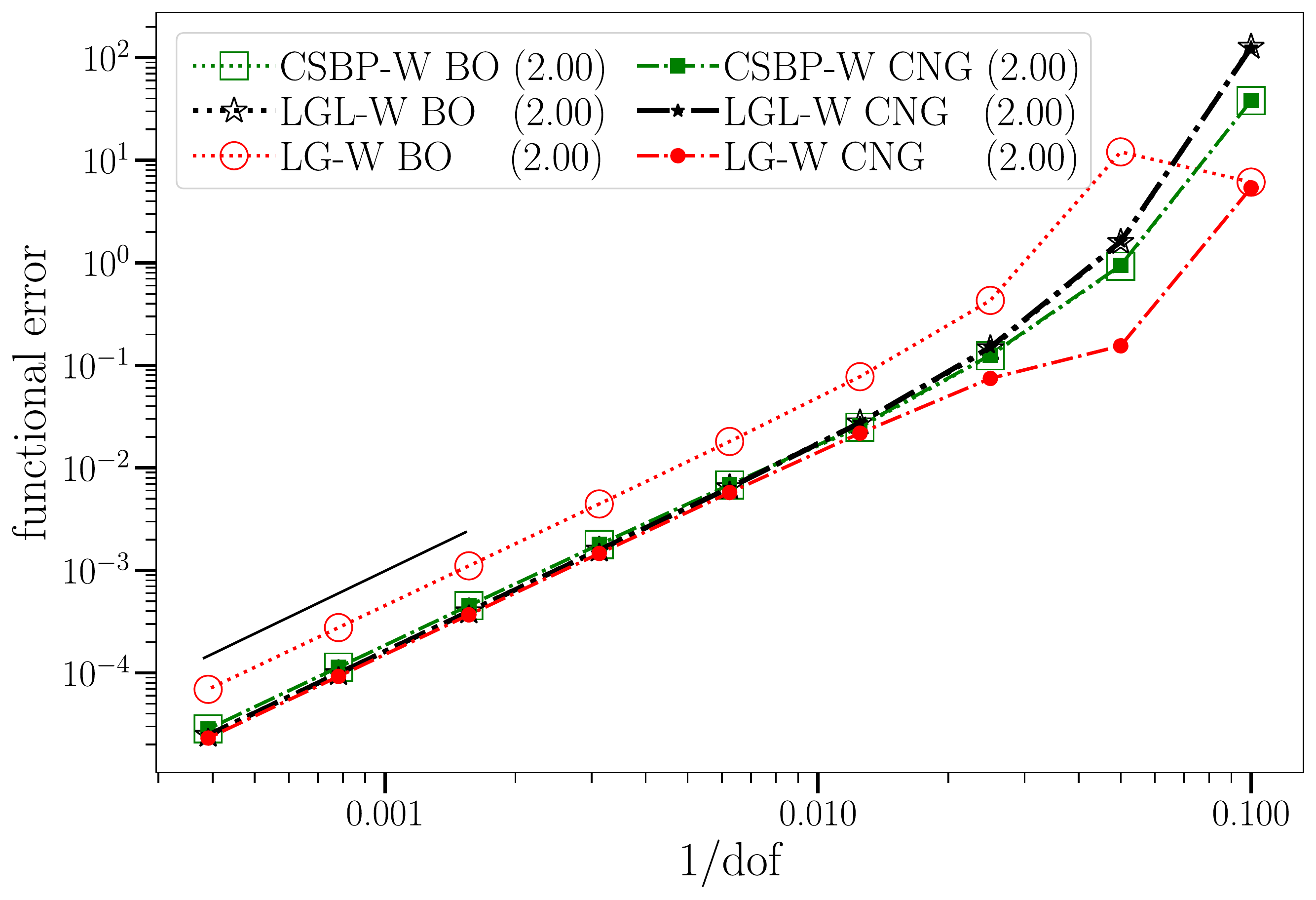}
		\label{fig:functional convergence4 p=1}}
	\hfill
	\subfloat[][$p=2$]{
		\includegraphics[scale=0.25]{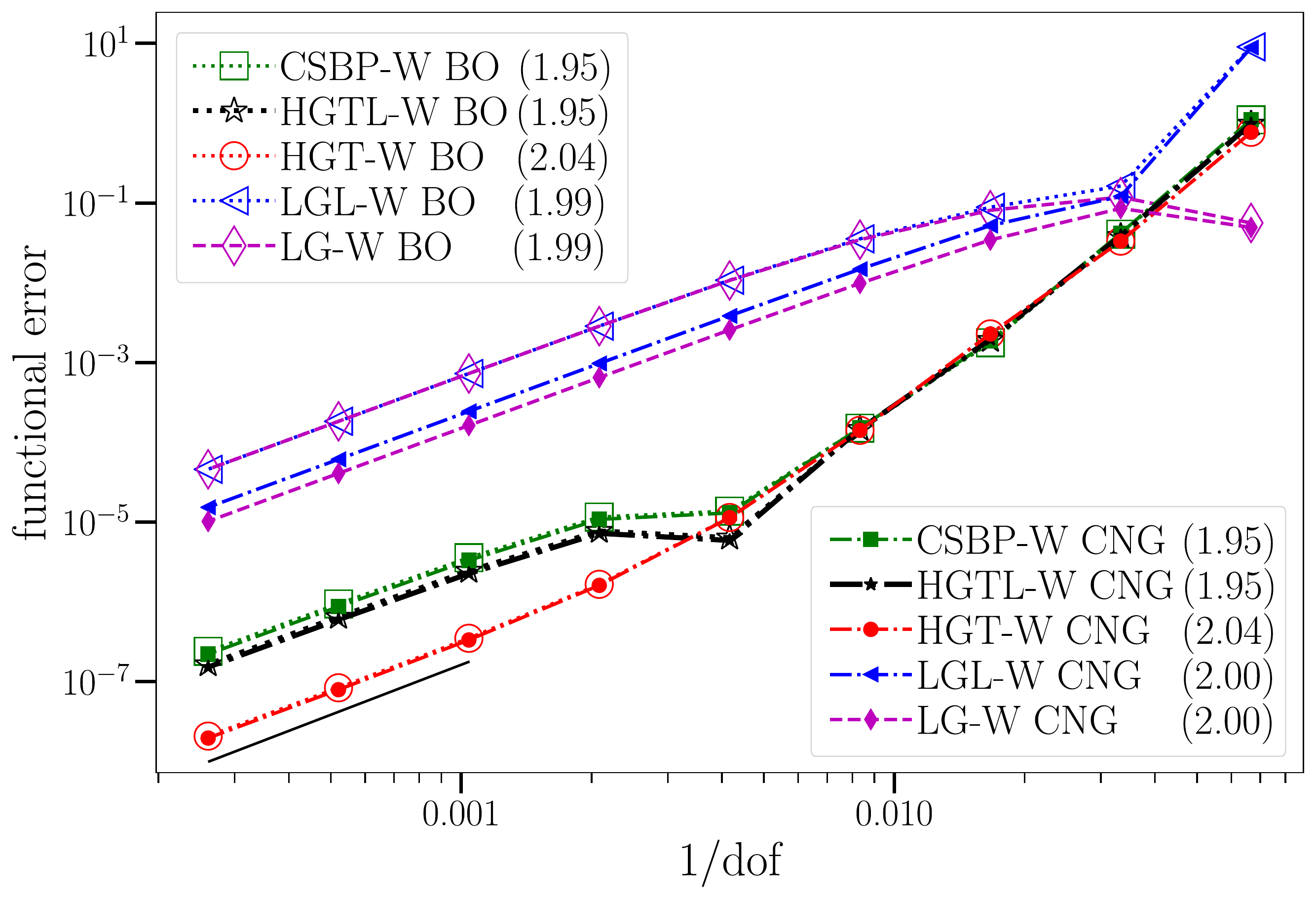}
		\label{fig:functional convergence4 p=2}}
	\\
	\subfloat[][$p=3$]{
		\includegraphics[scale=0.25]{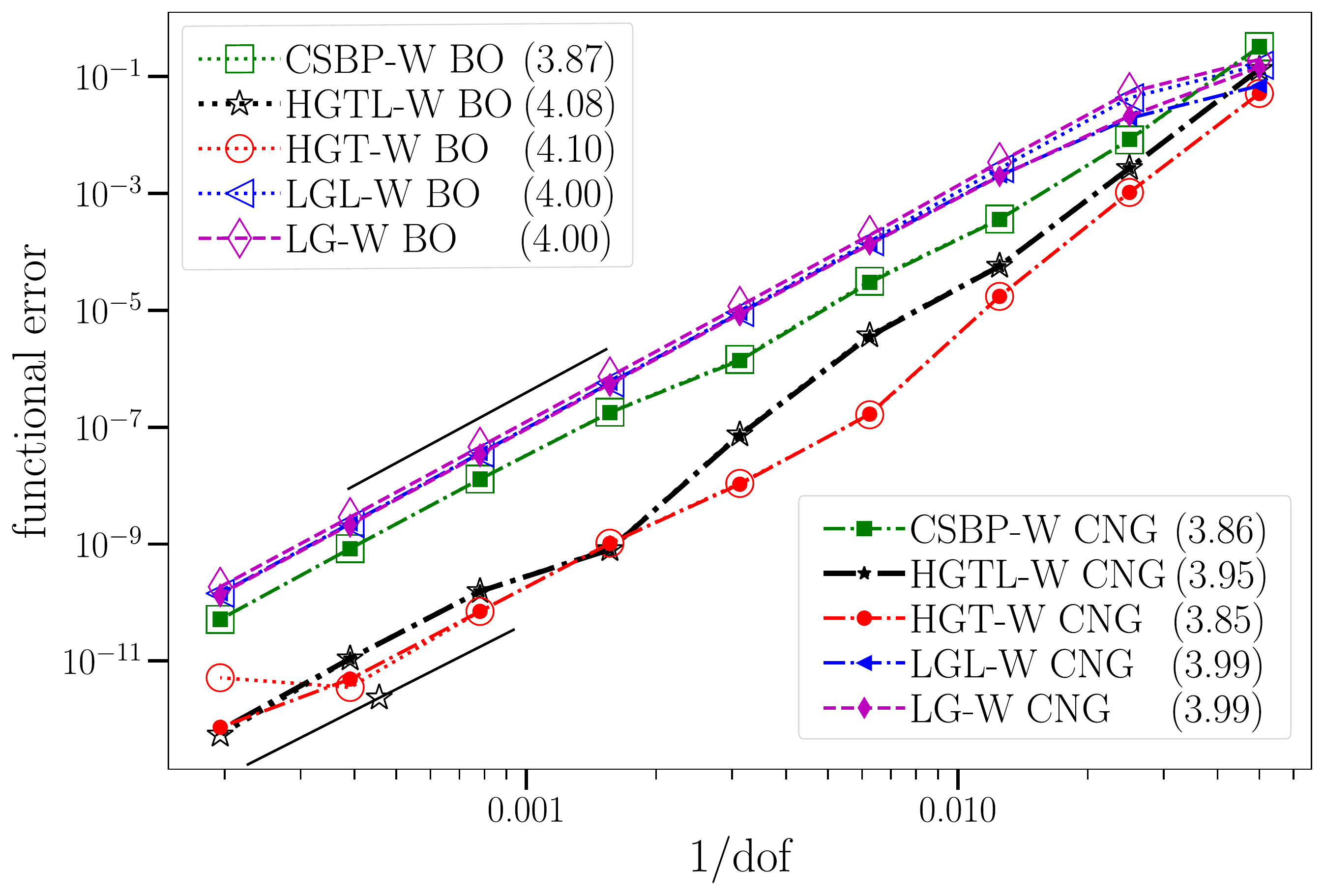}
		\label{fig:functional convergence4 p=3}}
	\hfill
	\subfloat[][$p=4$]{
		\includegraphics[scale=0.25]{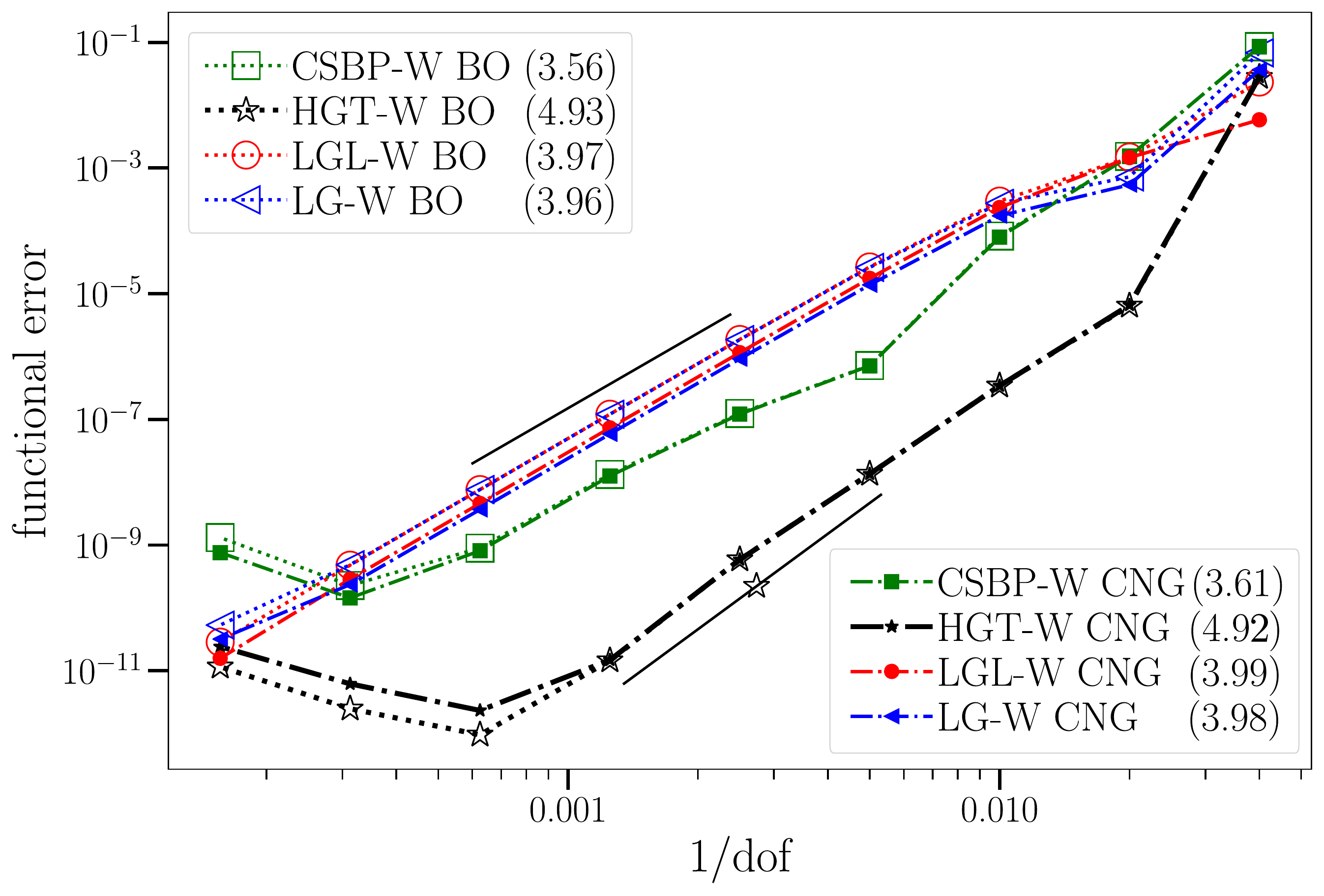}
		\label{fig:functional convergence4 p=4}}
	\caption{\label{fig:functional convergence BO} Functional convergence under mesh refinement with adjoint inconsistent SATs. The values in parentheses are the convergence rates, and ``W" stands for wide-stencil SBP operator. The convergence rates for the $ p=3 $ HGTL and $ p=4 $ HGT operators are calculated by fitting lines through the error values on the mesh resolution indicated by the short, thin lines with a star marker.}
\end{figure}

\begin{figure}[!t]
	\centering
	\subfloat[][$p=1$]{
		\includegraphics[scale=0.25]{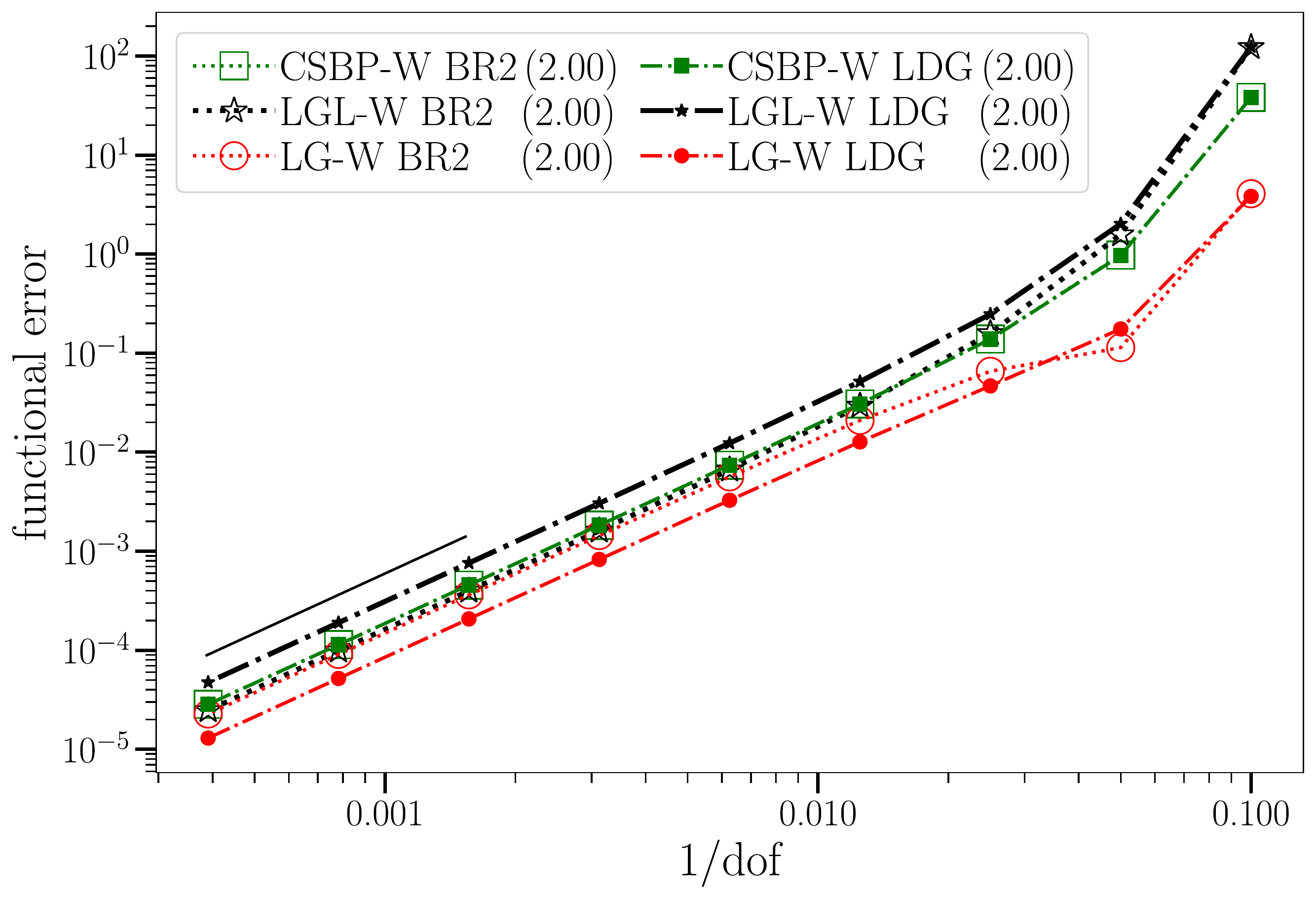}
		\label{fig:functional convergence2 p=1}}
	\hfill
	\subfloat[][$p=2$]{
		\includegraphics[scale=0.25]{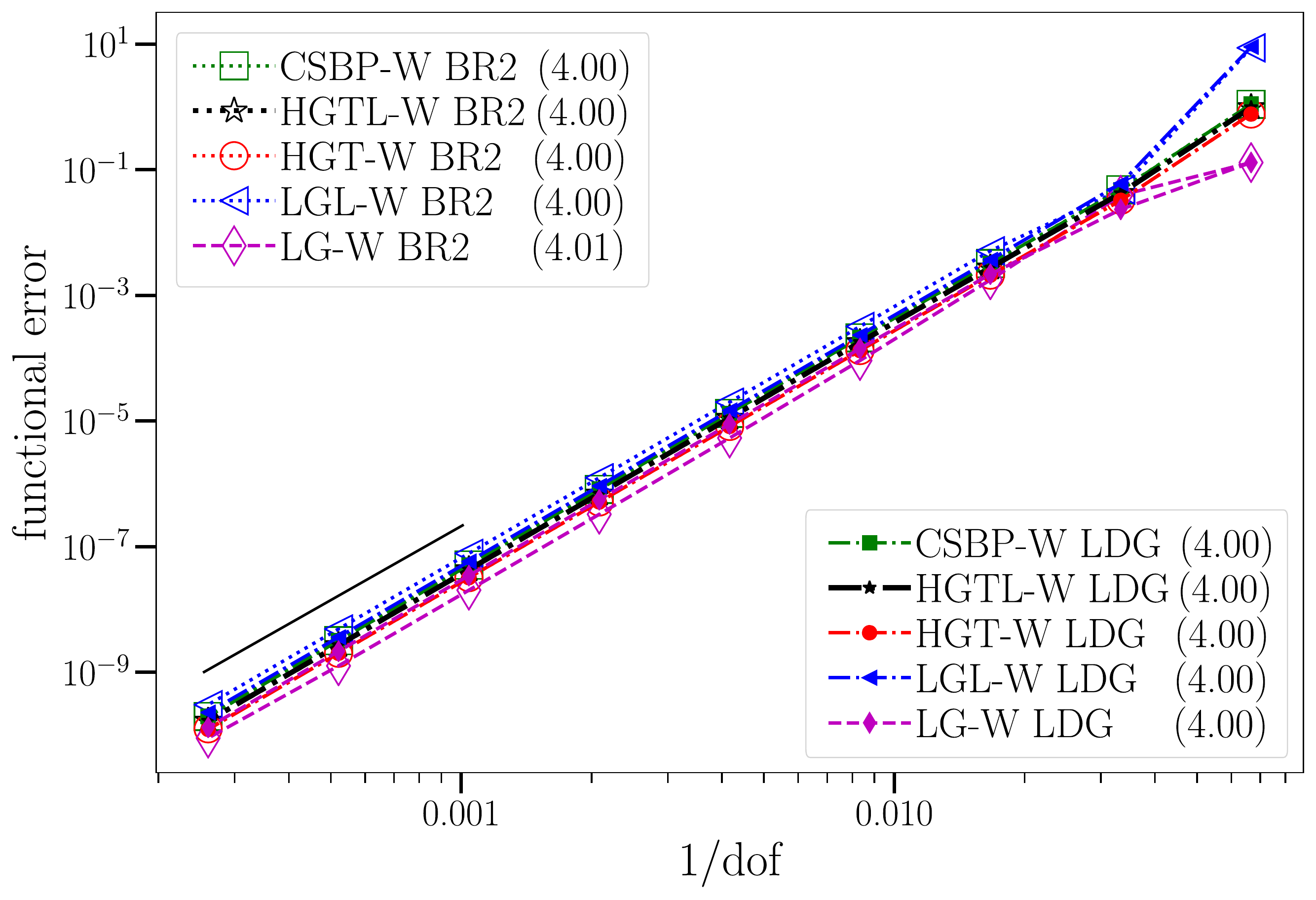}
		\label{fig:functional convergence2 p=2}}
	\\
	\subfloat[][$p=3$]{
		\includegraphics[scale=0.25]{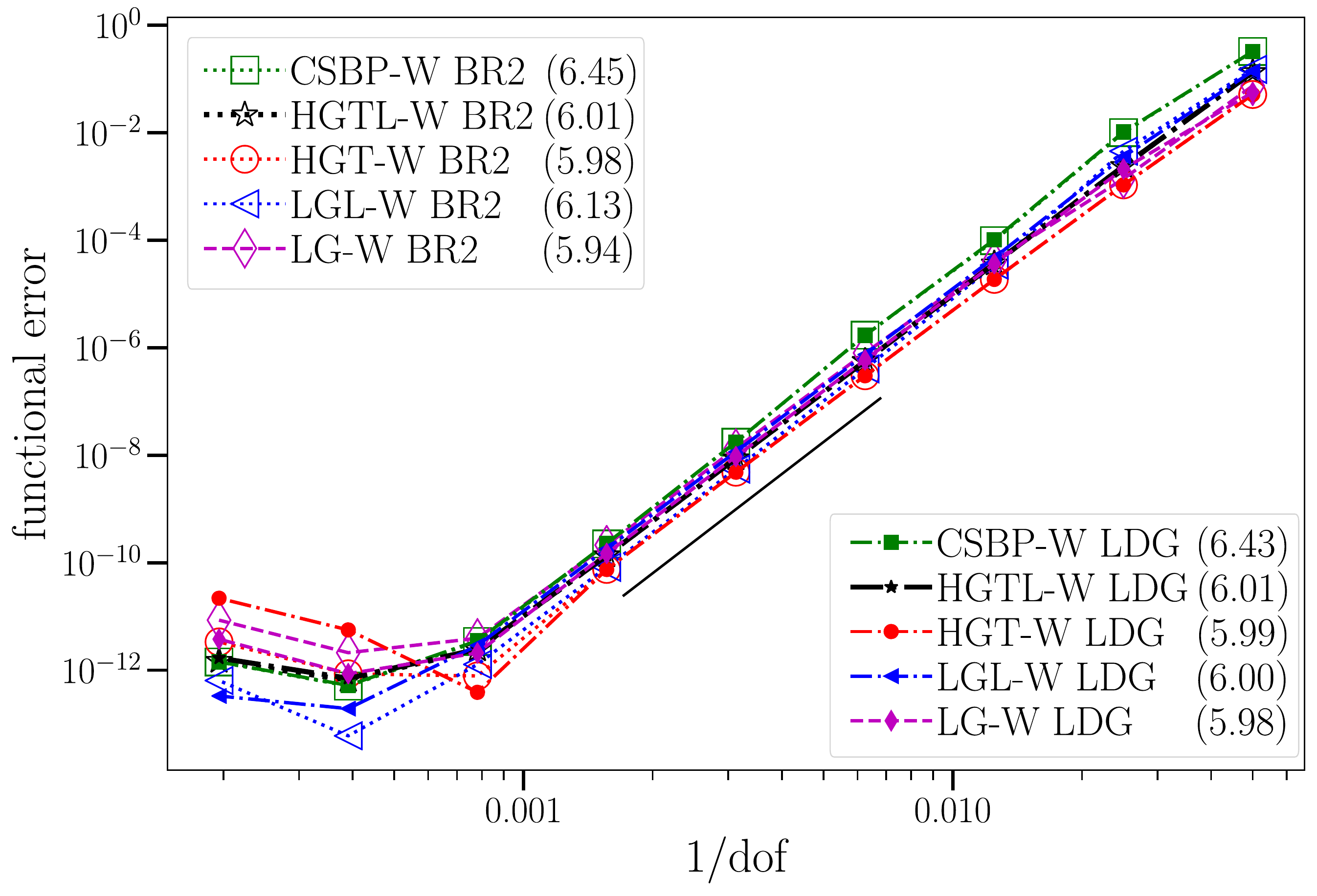}
		\label{fig:functional convergence2 p=3}}
	\hfill
	\subfloat[][$p=4$]{
		\includegraphics[scale=0.25]{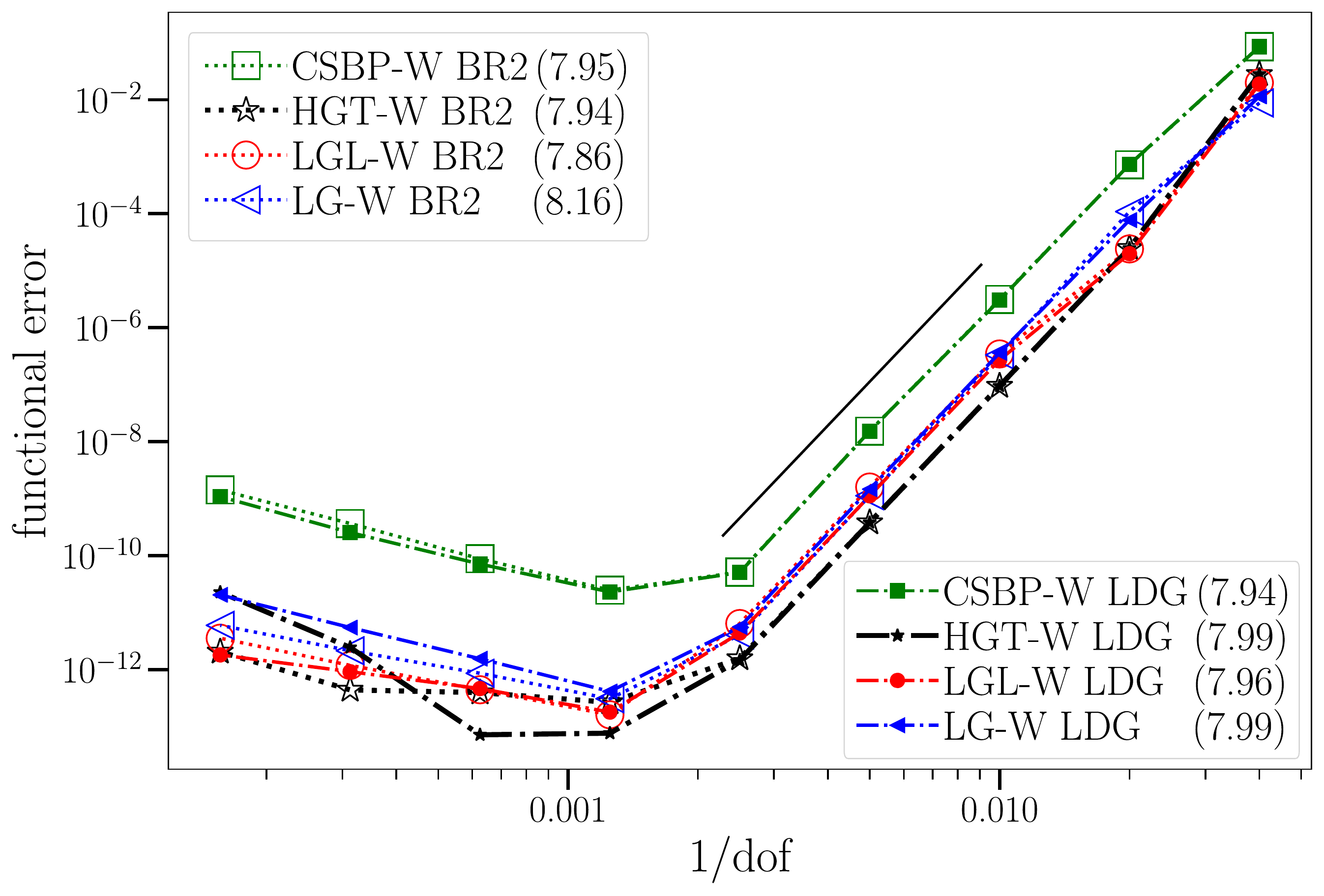}
		\label{fig:functional convergence2 p=4}}
	\caption{\label{fig:functional convergence BR2} Functional convergence under mesh refinement with adjoint consistent SATs.  The values in parentheses are the convergence rates, and ``W" stands for wide-stencil SBP operator.}
\end{figure}

\begin{figure}[!t]
	\centering
	\subfloat[][$p=1$]{
		\includegraphics[scale=0.25]{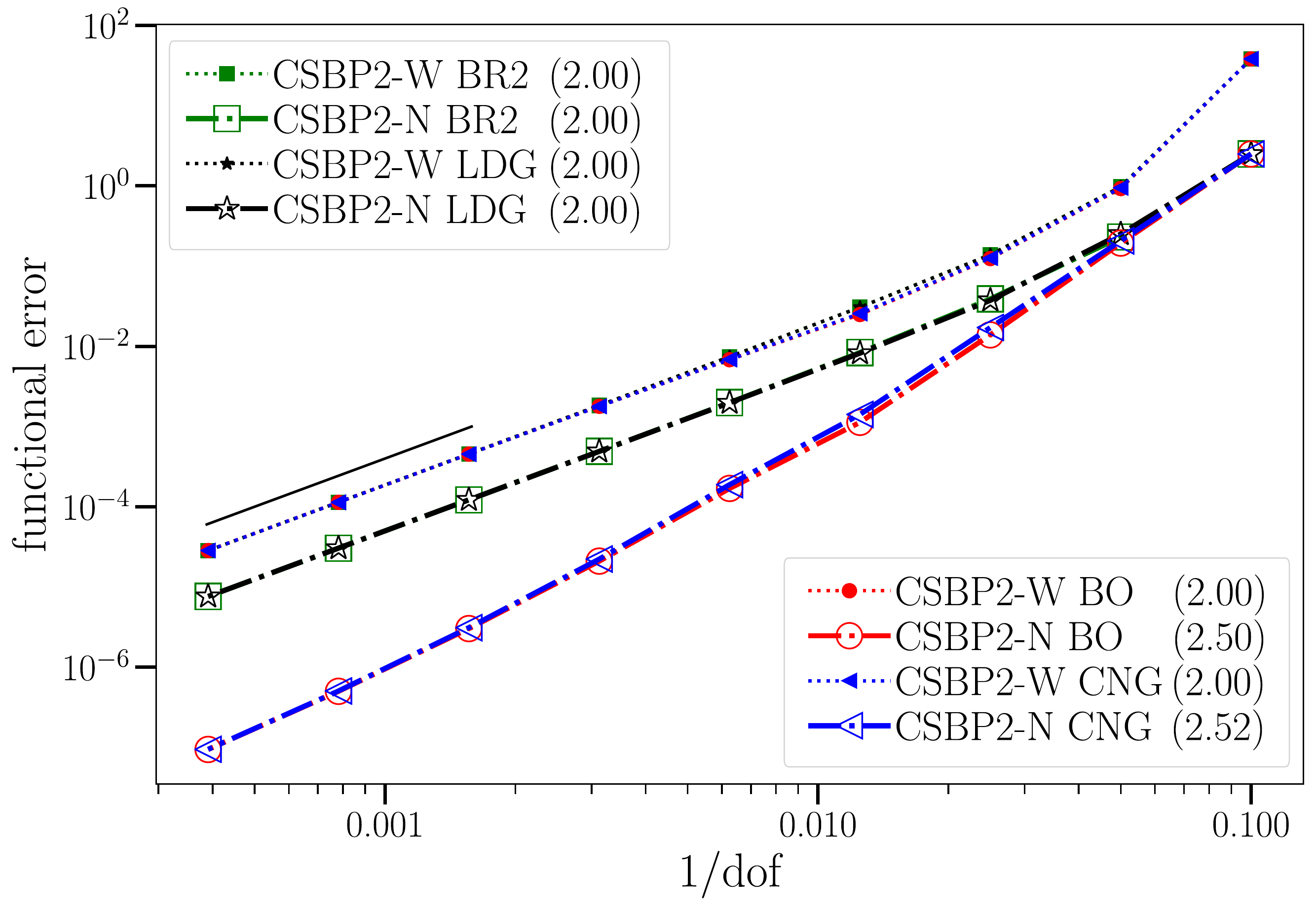}
		\label{fig:functional convergence3 p=1}}
	\hfill
	\subfloat[][$p=2$]{
		\includegraphics[scale=0.25]{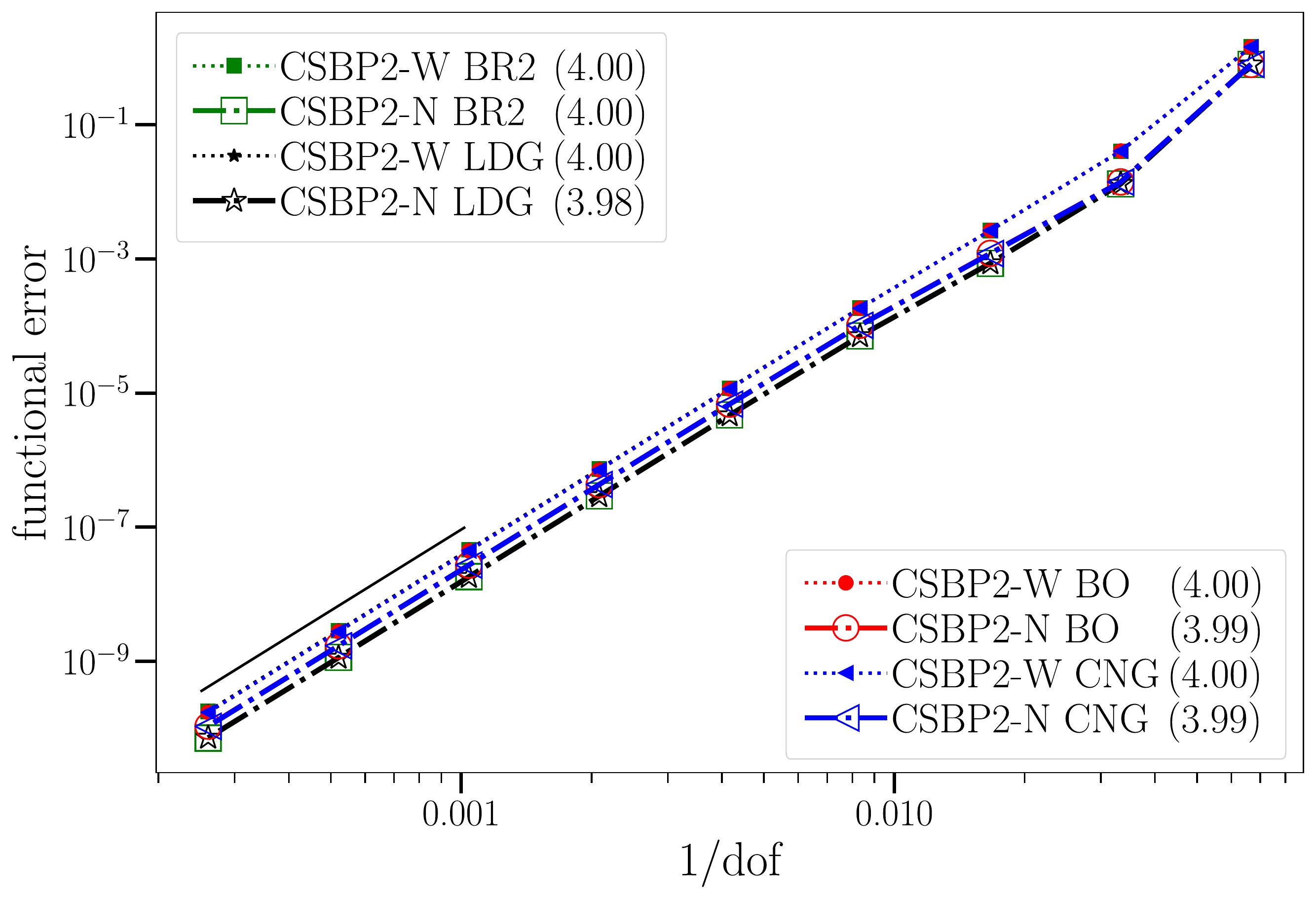}
		\label{fig:functional convergence3 p=2}}
	\\
	\subfloat[][$p=3$]{
		\includegraphics[scale=0.25]{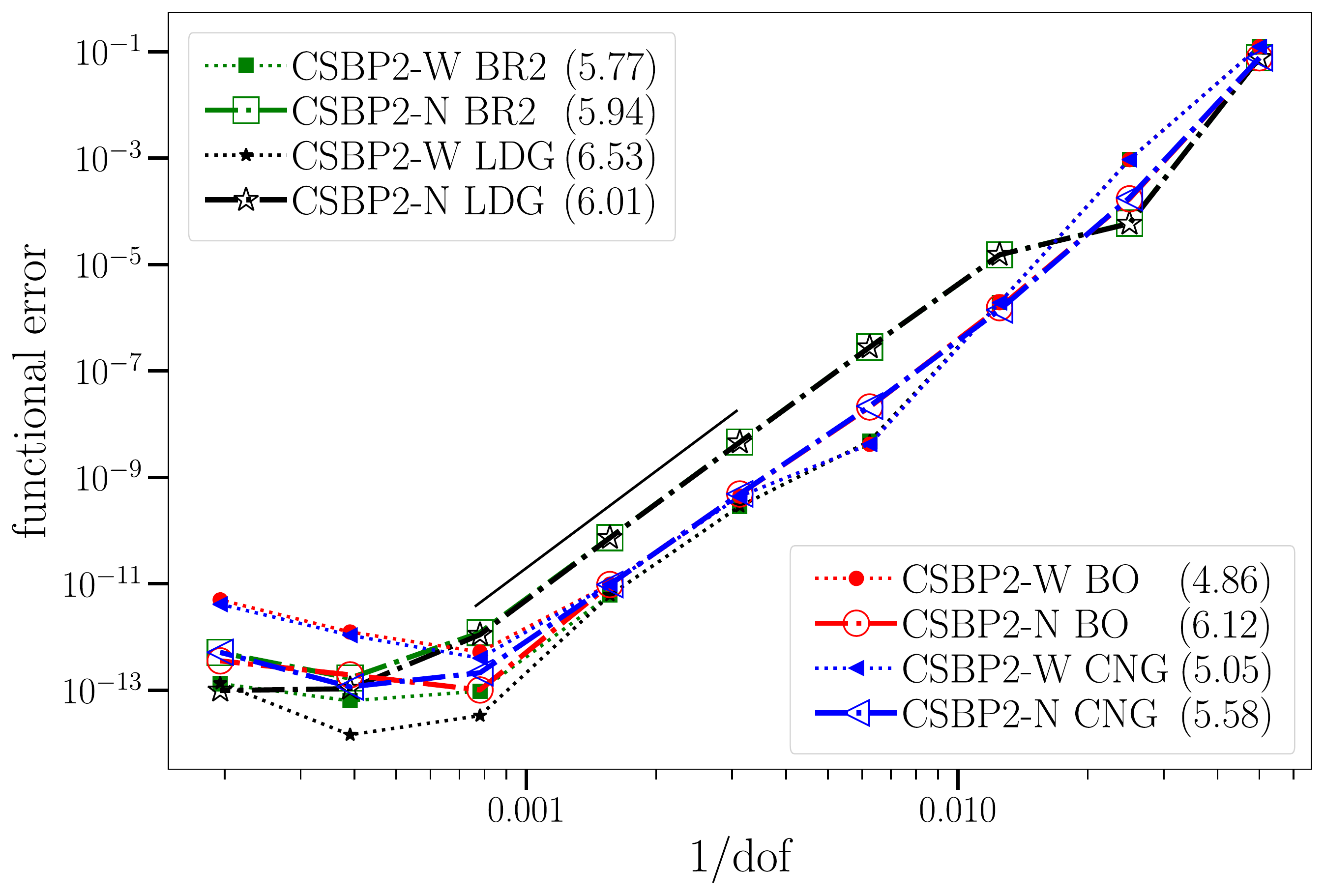}
		\label{fig:functional convergence3 p=3}}
	\hfill
	\subfloat[][$p=4$]{
		\includegraphics[scale=0.25]{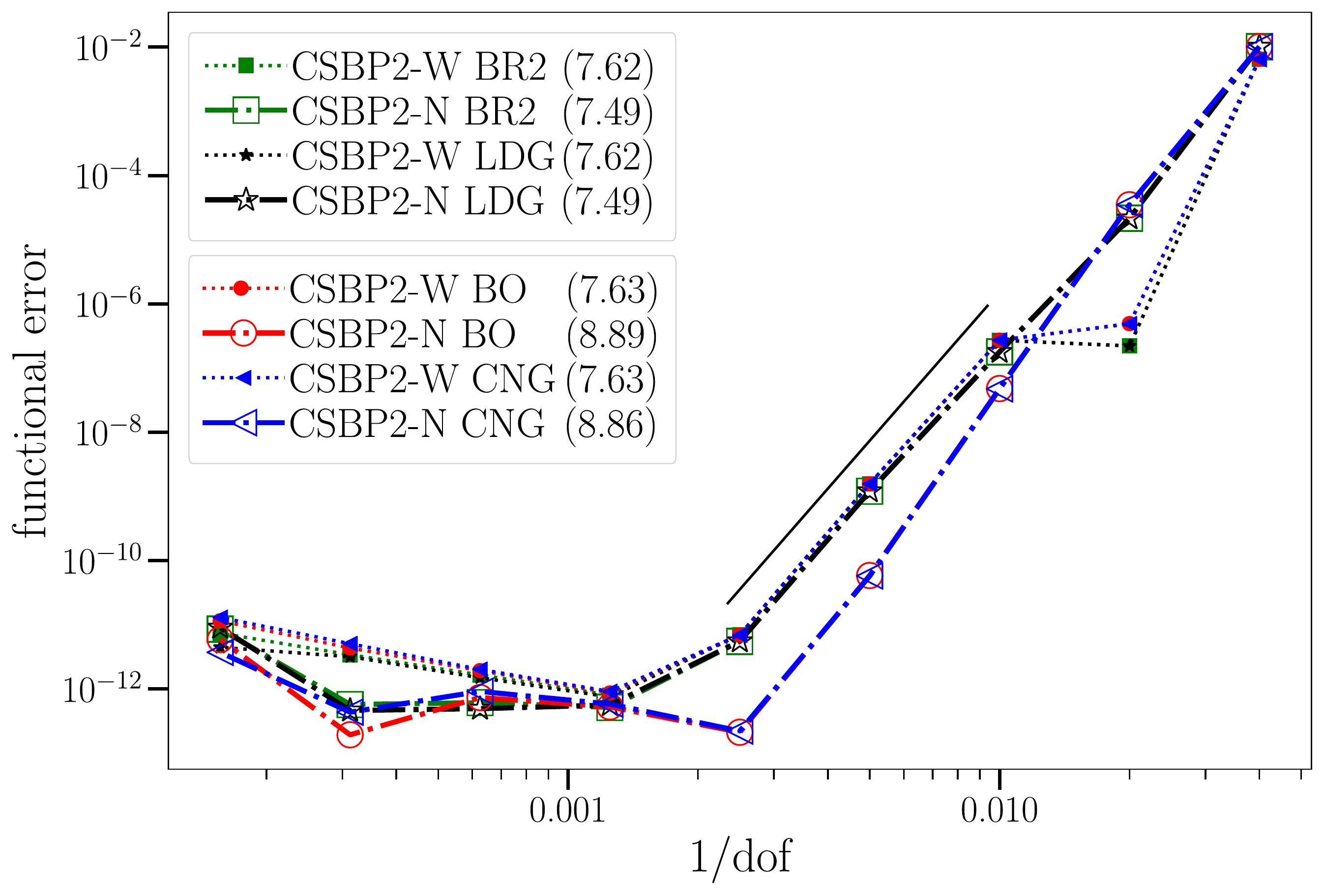}
		\label{fig:functional convergence3 p=4}}
	\caption{\label{fig:functional convergence dense} Functional convergence under mesh refinement with block-norm wide-stencil (``W") and narrow-stencil (``N") CSBP operators. The values in parentheses are the convergence rates.}
\end{figure}

\section{Conclusion}\label{sec:conclusion}

In this paper, we have shown that primal and adjoint consistent SBP-SAT discretizations of diffusion problems with diagonal-norm second-derivative generalized SBP operators lead to functional superconvergence if the primal and adjoint solutions are sufficiently smooth. For block-norm second-derivative operators, however, the analysis and the numerical experiments show that adjoint inconsistency does not degrade the functional convergence rate. We have also derived the conditions required for the stability of adjoint consistent SBP-SAT discretizations with the narrow-stencil second-derivative generalized SBP operators under the assumptions that the operators are consistent and nullspace consistent. The stability analysis also requires that the derivative operator at the element boundaries, $ \D_{b,k} $, be invertible. For most operators, $ \D_{b,k} $ is invertible or can easily be modified to be invertible. For some operators, however, this is not the case, and it might be necessary to enforce the invertibility of this matrix during the construction of the SBP operators to ensure that SBP-SAT discretizations with these operators are stable and adjoint consistent in addition to the other attractive numerical properties that narrow-stencil operators offer. 

Four different types of stable SATs for narrow-stencil SBP operators, among which two are adjoint consistent, are proposed and implemented in the numerical experiments. As predicted by the theory, the numerical experiments show that functionals superconverge at a rate of $ 2p $ when a diagonal-norm degree $ p+1 $ narrow-stencil or degree $ p $ wide-stencil generalized SBP operator is used along with adjoint consistent SATs. It is also observed that the adjoint consistent BR2 and LDG SATs yield solution convergence rates of $ p+1 $ and $ p+2 $ when implemented with diagonal-norm wide- and narrow-stencil SBP operators, respectively. Implementations with the block-norm wide- and narrow-stencil SBP operators show a solution and functional convergence rates of $ 2p $ regardless of the type of SAT used. The even-odd convergence properties of the adjoint inconsistent BO and CNG SATs are not observed when these SATs are implemented with the diagonal- and block-norm narrow-stencil SBP operators.

While the SATs presented in this work ensure the consistency, conservation, adjoint consistency, stability, and functional superconvergence of SBP-SAT discretizations with narrow-stencil generalized SBP operators, optimization of the SAT coefficients to achieve improved numerical properties, \eg, better conditioning and spectral radius, may be pursued in future work. 


\bibliographystyle{spmpsci}      
\bibliography{references}
\addcontentsline{toc}{section}{\refname}

\end{document}